\definecolor{lightgray}{gray}{0.9}
\DeclareRobustCommand{\SkipTocEntry}[5]{}
\def\skiptoc{\addtocontents{toc}{\SkipTocEntry}}
\def\lbar{\overline{l}}
\def\t{\mathfrak t}
\def\fB{\mathfrak{B}}
\def\fs{\mathfrak{s}}
\def\fA{\mathfrak{A}}
\def\group#1{\langle #1\rangle}
\let\gp=\group
\def\zetabar{{\overline{\zeta}}}
\def\etabar{{\overline{\eta}}}
\def\e#1#2{e^{\frac{#1}{#2}\pi i}}
\def\R{\mathbb{R}}
\def\Z{\mathbb{Z}}
\def\Q{\mathbb{Q}}
\def\MaxTorus{\mathcal{T}}
\def\cT{\mathcal{T}}
\def\TT{\mathbb{T}}
\def\Ti{\mathbb{T}^1}
\def\mbar{\overline{m}}
\def\mdot{\dot{m}}
\def\mddot{\ddot{m}}
\def\x{\mathfrak{x}}
\let\mid=:
\newcommand{\C}{\mathbb{C}}
\def\cZ{{\mathcal Z}}
\def\q {{/\!/}}
\def\cG{{\mathcal G}}
\def\cM{{\mathcal M}}
\newtheorem*{theorem2}{Theorem \ref{thm2}}
\newtheorem*{theorem3}{Theorem \ref{thm3}}
\newtheorem*{theorem4}{Theorem \ref{thm4}}
\newtheorem*{theorem5}{Theorem \ref{thm5}}
\newtheorem*{theorem8}{Theorem \ref{thm8}}
\newtheorem{theorem}{Theorem}
\newtheorem{lemma}{Lemma}[section]
\newtheorem*{Conjecture}{Conjecture}
\newtheorem{proposition}[lemma]{Proposition}
\theoremstyle{remark}
\newtheorem{example}[lemma]{Example}
\theoremstyle{definition}
\newtheorem{remark}[lemma]{Remark}
\newtheorem{definition}[lemma]{Definition}
\def\D{\mathcal{D}}
\def\wre#1{\mathop{\wr}_{#1}}
\def\F{\mathbb{F}}
\def\rep{\hbox{$\mathrm{rep}$}}
\title{Centralisers, complex reflection groups and actions in the Weyl group $E_6$}
\author{Graham A. Niblo, Roger Plymen and Nick Wright}
\address{Mathematical Sciences, University of Southampton, SO17 1BJ,  England}
\email{g.a.niblo@soton.ac.uk, r.j.plymen@soton.ac.uk, wright@soton.ac.uk}
\subjclass[2020]{Primary: 20G41, 20G07; secondary: 20G05, 20G10.}
\keywords{Weyl groups; complex reflection groups; exceptional Lie groups; centralisers; duality; $K$-theory.}
\begin{document}

\begin{abstract}
The compact, connected Lie group $E_6$ admits two forms: simply connected and adjoint type.  As we previously established, the Baum-Connes isomorphism relates the two Langlands dual forms, giving a duality between the equivariant K-theory of the Weyl group acting on the corresponding maximal tori.  Our study of the $A_n$ case showed that this duality persists at the level of homotopy, not just homology.
In this paper we compute the extended quotients of maximal tori for the two forms of $E_6$, showing that the homotopy equivalences of sectors established in the $A_n$ case also exist here, leading to a conjecture that the homotopy equivalences always exist for Langlands dual pairs.
In computing these sectors we show that centralisers in the $E_6$ Weyl group decompose as direct products of reflection groups, generalising Springer's results for regular elements, and we develop a pairing between the component groups of fixed sets generalising Reeder's results.
As a further application  we compute the $K$-theory of the reduced Iwahori-spherical $C^*$-algebra of 
the p-adic group $E_6$, which may be of adjoint type or simply connected. 
\end{abstract}

\maketitle

\skiptoc
\section*{Introduction}

The structure of element centralisers in  Weyl groups plays a key role in our understanding of the representation theory of reductive groups of $p$-adic type. For the narrow class of regular elements Springer showed that the centralisers have the structure of a complex reflection group inherited directly from the action on the corresponding regular eigenspace \cite{springer1974regular}. Building on the work of Springer, Brewer \cite{brewer2000complex} classified the irreducible, rank $n$ complex reflection groups in a rank $2n$ real reflection group, exhibiting these as subgroups of centralisers in the automorphism group of the root graph. In the case of elliptic elements Reeder studied the centralisers by constructing a symplectic form on the coinvariant representation \cite{reeder2011elliptic}.

As remarked by Reeder, there is no general theory for the structure of centralisers in Weyl groups, though the classical cases $A_n,B_n,C_n,D_n$ are well understood. In this paper we determine the structure for all the centralisers in the Weyl group of type $E_6$. We provide a  description in terms of generators and relations and give a classification of each centraliser as a product of reflection groups.  In the case of the seven  conjugacy classes of regular elements this recovers Springer's identification of the centralisers as complex reflection groups, though our approach is somewhat different to Springer.

\def\thmvii{Let $W$ be the Weyl group of type $E_6$. For each $w\in W$ the centraliser $Z_W(w)$ decomposes as a product of reflection groups.
}

\begin{theorem}
\label{thm7}\thmvii
\end{theorem}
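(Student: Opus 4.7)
The plan is a case-by-case analysis through the 25 conjugacy classes of $W = W(E_6)$. Using Carter's classification via admissible diagrams, I would pick a representative $w$ for each class, realize $W$ concretely as the subgroup of $O(V)$ generated by the simple reflections of $E_6$ ($V = \R^6$), and compute $Z_W(w)$ together with a set of reflection generators exhibiting a product decomposition. The centralizer order is fixed in each case by the size of the conjugacy class, so $|W|/|w^W|$ provides a target to meet.

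The regular conjugacy classes are handled uniformly by Springer's theorem: if $w$ has regular eigenvalue $\zeta$, the action of $Z_W(w)$ on the $\zeta$-eigenspace identifies $Z_W(w)$ as a complex reflection group, and these cases are finished by reading off the resulting Shephard-Todd group. For the non-regular classes I would exploit the eigenspace decomposition of $w$ on $V \otimes \C$. The fixed subspace $V^w$ is $Z_W(w)$-stable, giving a restriction homomorphism $Z_W(w) \to \mathrm{GL}(V^w)$; the image should be a real reflection group acting on $V^w$, while the kernel consists of elements that commute with $w$ and fix $V^w$ pointwise. In the favourable cases the resulting short exact sequence splits as a direct product of reflection groups; when it does not split cleanly one reduces further to a centraliser problem in a smaller classical Weyl subgroup of $W$, where the decomposition is known (this is one way in which Brewer's classification of rank-$n$ complex reflection subgroups in rank-$2n$ Weyl groups, cited in the introduction, can be brought to bear).

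The main obstacle is the verification step for the non-regular classes: showing both that the candidate factors really are reflection groups (i.e.\ that enough genuine reflections have been produced in each factor) and that their product exhausts $Z_W(w)$. Explicit reflection generators establish the former; order matching against $|W|/|w^W|$ establishes the latter. The elliptic classes (with $V^w = 0$) are especially delicate since the above reduction to a smaller Weyl subgroup is unavailable; here Reeder's symplectic form on the coinvariant representation, flagged in the introduction, will provide the main structural constraint. An independent \textsc{GAP} computation would offer useful cross-validation of the conjugacy class list, centraliser orders, and reflection-group identifications.
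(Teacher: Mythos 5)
Your overall architecture (case-by-case over the $25$ Carter classes, Springer's theorem for the regular classes, order-matching against $|W|/|w^W|$) matches the paper's, but there is a concrete false step in your treatment of the non-regular classes. You assert that the image of the restriction $Z_W(w)\to \mathrm{GL}(V^w)$ ``should be a real reflection group acting on $V^w$.'' This fails already for the class of type $A_2$, $w=s_0s_6$: here $V^w$ is $4$-dimensional, spanned by the two orthogonal $A_2$ subsystems $\{r_1,r_2\}$ and $\{r_4,r_5\}$, and the image of $Z_W(w)$ is $(S_3\times S_3)\rtimes C_2=S_3\wr C_2$, where the $C_2$ swaps the two planes. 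No element of the nontrivial coset fixes a hyperplane of $V^w$ (an element $(a,b)\sigma$ fixes at most a $2$-plane), so the reflection subgroup of the image is the proper subgroup $S_3\times S_3$ and the image is \emph{not} a reflection group on $V^w$. Worse, $S_3\wr C_2$ is not a complex reflection group in any representation; the theorem survives for this class only because the paper interprets ``reflection group'' broadly enough to include reflection groups over finite fields ($S_3\wr C_2\cong O_2^-(\F_3)\ltimes\F_3^2$ embeds as a reflection group in $GL_3(\F_3)$). Your toolkit --- real/complex eigenspace analysis, Springer regularity, Brewer's classification of complex reflection subgroups --- cannot produce this identification, so your strategy stalls on this class rather than merely requiring more work.

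Two further points. First, several centralisers are extensions that do not split as a direct product of the kernel and image of your restriction map but are nonetheless irreducible reflection groups: for the class $A_1^4$ the centraliser is $W(D_4)\rtimes S_3\cong G_{28}=W(F_4)$, and recognising this requires exhibiting reflections that mix the two pieces (the paper does this by locating the extra reflection $T$ with root $r_T=\tfrac12(r_0+r_1+r_5+r_3)$ inside the $-1$-eigenspace), not by splitting your exact sequence. Second, your appeal to Reeder's symplectic form for the elliptic classes is a placeholder rather than an argument: the paper instead handles the non-cyclic elliptic centralisers ($A_2^3$, $E_6[a_2]$, $D_4[a_1]$) by constructing an explicit invariant complex structure $J$ on the relevant eigenspace and verifying braid relations among explicit complex reflections to pin down $G_{25}$, $G_5$ and $G_8$, and handles $A_5\times A_1$ by again producing $T$ from the $-1$-eigenspace. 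To turn your outline into a proof you would need, at minimum, explicit generators for the index-$>1$ cases (the paper sources all of them from a single $\D_3\times\D_3$ subgroup $\langle u_1,u_2,T,Ts_3\rangle$ built from the extended Dynkin diagram) and a correct notion of ``reflection group'' covering the $A_2$ class.
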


\noindent For all the elements of order $1$ or $2$ in the Weyl group, the centralisers are real reflection groups.  In all but one of the remaining cases they are truly complex reflection groups. The one exception is the centraliser of the element of type $A_2$.  This is the product of the cyclic group $C_3$ with the group $(S_3\wr C_2)$: the latter of which is not a complex reflection group.  This factor however is a reflection group over the finite field $\F_3$. The structure as reflection groups is outlined in Figure \ref{fig:centraliserinclusions}.

\bigskip

In the case of an elliptic element $w$ in a Weyl group $W$, Reeder studied the centraliser via its actions on the (finite) group of coinvariants $\Gamma/(I-w)\Gamma$ where $\Gamma$ is the root lattice, see \cite{reeder2011elliptic}. Reeder defined a pairing on $\Gamma/(I-w)\Gamma$, for an elliptic element $w$, and showed that for $\Gamma$ a self-dual lattice the pairing is non-degenerate.
Dual to the action on coinvariants one may consider the action of $W$ on the invariant part of the Pontryagin dual of $\Gamma$, which is the maximal torus $\MaxTorus^\vee$ in the Langlands dual of the simply connected form of the Lie group. For non-elliptic elements, the coinvariants and invariants become infinite, leading us to study the finite component groups of the fixed sets in maximal tori $\MaxTorus$ and $\MaxTorus^\vee$ of Langlands dual groups. The centraliser of an element $w\in W$ can be represented by automorphisms of these two finite groups, and we show that these groups and actions are dual.

\bigskip
We will establish this duality in a more general context: let $W$ be a finite group acting orthogonally on a Euclidean vector space $\t$, preserving a lattice $\Gamma$ in $\t$. Let $\Gamma^\vee$ denote the dual lattice in $\t^*$ and let $\MaxTorus=\t/\Gamma,\MaxTorus^\vee=\t^*/\Gamma^\vee$. The fixed sets    $\MaxTorus^w$ and $(\MaxTorus^\vee)^w$ each consist of a product of a sub-torus with a finite subgroup: $\MaxTorus^w\cong \MaxTorus^w_1\times F_w$ and $(\MaxTorus^\vee)^w\cong (\MaxTorus^\vee)^w_1\times F^\vee_w$ where $\MaxTorus^w_1$ and $(\MaxTorus^\vee)^w_1$ denote the identity components of the $w$-fixed sets in $\MaxTorus$ and $\MaxTorus^\vee$ respectively.
(In the case where $W$ is a Weyl group, $\Gamma$ the root lattice and $w\in W$ an elliptic element, the group $F_w^\vee$ is the fixed set for the action of $w$ on $\MaxTorus^\vee$ and the Pontryagin dual of ${F_w^\vee}$ is identified with Reeder's group of coinvariants.)
We will construct a non-degenerate pairing between $F_w, F_w^\vee$ for all elements of the group $W$, allowing us to prove:

\def\thmiv{

Let $W$ be a finite group acting orthogonally on a Euclidean vector space $\t$, preserving a lattice $\Gamma$ in $\t$. Then the dual component group $F^\vee_w$ is (canonically) isomorphic to the Pontryagin dual of $F_w$. Therefore $F^\vee_w$ is (non-canonically) isomorphic to $F_w$.
}

\begin{theorem}\label{Component group duality}
\label{thm4}\thmiv
\end{theorem}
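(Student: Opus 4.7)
The plan is to identify both $F_w$ and $F_w^\vee$ with Tate cohomology groups of the cyclic group $\langle w\rangle$ with coefficients in the lattices $\Gamma$ and $\Gamma^\vee$ respectively, and then to invoke the standard cohomological duality for cyclic groups acting on dual $\Z$-lattices.

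First, apply the long exact sequence in group cohomology to the short exact sequence of $\langle w\rangle$-modules
$$0 \to \Gamma \to \t \to \MaxTorus \to 0.$$
Because $\t$ is a $\Q$-vector space we have $H^i(\langle w\rangle,\t)=0$ for $i\geq 1$, so the connecting homomorphism $\MaxTorus^w \to H^1(\langle w\rangle,\Gamma)$ is surjective. Its kernel is the image of $\t^w$, which is precisely the subtorus $\MaxTorus_1^w$ (being connected, containing the identity, and having the correct Lie algebra $V_0 := \ker(I-w)$). Hence $F_w \cong H^1(\langle w\rangle,\Gamma) = \ker(N|_\Gamma)/(I-w)\Gamma$, where $n$ is the order of $w$ and $N := I + w + \cdots + w^{n-1}$. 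The same reasoning gives $F_w^\vee \cong H^1(\langle w\rangle,\Gamma^\vee)$. Both groups are finite, since $\ker(N|_\Gamma) = \Gamma \cap V_1$ and $(I-w)\Gamma$ are both full-rank sublattices of $V_1 := (I-w)\t$.

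Next, construct the canonical pairing. The Euclidean inner product on $\t$ provides a $W$-equivariant identification of $\Gamma^\vee$ with $\mathrm{Hom}_\Z(\Gamma,\Z)$ (with the dual $\langle w\rangle$-action), and hence a perfect $\langle w\rangle$-invariant bilinear pairing $\Gamma\otimes\Gamma^\vee\to\Z$. Explicitly, represent $[\gamma]\in F_w$ by $\gamma\in\Gamma\cap V_1$ and lift to $\tilde t\in V_1$ via $(I-w)\tilde t=\gamma$; define
$$\langle[\gamma],[\gamma^\vee]\rangle := \langle\tilde t,\gamma^\vee\rangle \bmod \Z.$$
Well-definedness is routine: the ambiguity in $\tilde t$ lies in $V_0$, which is orthogonal to $V_1 \ni \gamma^\vee$; changing $\gamma$ by $(I-w)\gamma'$ shifts $\tilde t$ by $\gamma'\in\Gamma$ and the correction $\langle\gamma',\gamma^\vee\rangle$ lies in $\Z$; and changing $\gamma^\vee$ by $(I-w)\eta$ yields a correction $\langle\tilde t,(I-w)\eta\rangle = \langle(I-w^{-1})\tilde t,\eta\rangle = -\langle w^{-1}\gamma,\eta\rangle\in\Z$, using that $w$ is orthogonal. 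Up to sign this is the cup product pairing on Tate cohomology.

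The principal obstacle is non-degeneracy. For this we invoke the standard Tate duality theorem: for a finite cyclic group $C$ and a $\Z$-free $C$-module $M$, the cup product
$$\hat H^i(C,M)\times \hat H^{-i}(C,\mathrm{Hom}_\Z(M,\Z))\to\hat H^0(C,\Z)\cong\Z/|C|\Z$$
is a perfect pairing of finite abelian groups. Applying this with $C=\langle w\rangle$, $M=\Gamma$, $i=1$, and using the $2$-periodicity $\hat H^{-1} = \hat H^1$ valid for cyclic groups, we obtain a perfect pairing $F_w\times F_w^\vee\to\Z/n\Z$. Since both groups are annihilated by $n$, this induces the canonical isomorphism $F_w^\vee\cong\widehat{F_w}$. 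Because $F_w$ is a finite abelian group, it is non-canonically isomorphic to its Pontryagin dual, completing the proof.
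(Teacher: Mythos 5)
Your proof is correct, but it takes a genuinely different route from the paper's. The paper never passes through group cohomology: it computes Pontryagin duals directly, identifying $\widehat{F_w}$ with $\x_w(\Gamma^\vee)/(I-w)\Gamma^\vee$ by restricting characters of $\MaxTorus$, and then constructs an explicit integer-valued twisted pairing $\langle x,y\rangle_w=\langle x,\mbar(w)y\rangle$, where $\mbar$ is built from the minimal polynomial of $w$; non-degeneracy modulo $\mu$ is proved by hand using the identity $\mbar(w)(I-w)=\mu p_{\x_w}$ together with the computation of $p_{\x_w}\Gamma^\vee$ as the dual lattice of $\x_w(\Gamma)$. You instead identify $F_w$ itself with $\hat{H}^1(\langle w\rangle,\Gamma)=\x_w(\Gamma)/(I-w)\Gamma$ via the connecting map of $0\to\Gamma\to\t\to\MaxTorus\to 0$ (and dually for $F_w^\vee$), and you outsource non-degeneracy to Tate duality for $\Z$-free modules plus $2$-periodicity for cyclic groups. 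Both arguments are sound. Your route is shorter and conceptually cleaner, and the naturality of the cup product would also deliver the equivariance statement of Theorem \ref{equivariant pairing} essentially for free; the paper's construction is elementary and self-contained, deliberately generalises Reeder's coinvariant pairing to non-elliptic elements, and produces the explicit $\Z/\mu\Z$-valued form that is reused in later sections. Two minor points: the non-degeneracy you actually invoke is that of the cup product, so your unverified parenthetical claim that the lift-and-evaluate formula agrees with the cup product up to sign is not needed for the conclusion; and the periodicity isomorphism $\hat{H}^{-1}\cong\hat{H}^{1}$ depends on a choice of generator of the cyclic group, which is canonical here since the group is presented as $\langle w\rangle$.
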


\def\thmviii{
The actions of $Z_W(w)$ on $F_w$ and $F_w^\vee$ are dual.
}
\begin{theorem}
\label{equivariant pairing}\label{thm8}\thmviii
\end{theorem}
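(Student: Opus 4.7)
The plan is to verify that the non-degenerate pairing $\beta \colon F_w \times F_w^\vee \to \Ti$ supplied by Theorem~\ref{thm4} is $Z_W(w)$-invariant. By Pontryagin duality, $Z_W(w)$-invariance of $\beta$ is precisely the assertion that the two actions are dual: under the isomorphism $F_w^\vee \cong \widehat{F_w}$ induced by $\beta$, the action of $g \in Z_W(w)$ on $F_w^\vee$ becomes the contragredient of its action on $F_w$. So the task reduces to equivariance of $\beta$.

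First I would recall the explicit description of $\beta$ at the level of lifts in $\t$ and $\t^*$, as in the proof of Theorem~\ref{thm4}. Writing $V = \ker(I-w) \subseteq \t$, an element $x \in F_w$ is represented by a vector $u \in V^\perp$ satisfying $(I-w)u \in \Gamma$; translating $u$ within $\Gamma \cap V^\perp$ leaves its class unchanged. Dually, $y \in F_w^\vee$ is represented by $u^*$ in the corresponding complementary subspace of $\t^*$ with $(I-w^*)u^* \in \Gamma^\vee$. The pairing then takes the form
\[
\beta(x,y) \;=\; \langle (I-w)u,\ u^* \rangle \;=\; \langle u,\ (I-w^*)u^* \rangle \pmod{\Z},
\]
using the canonical pairing $\t \times \t^* \to \R$; well-definedness reduces to the fact that $\Gamma$ and $\Gamma^\vee$ pair integrally, and non-degeneracy is exactly Theorem~\ref{thm4}.

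Next I would observe that every $g \in Z_W(w)$ commutes with $w$ and so preserves $V$, its orthogonal complement, and the sublattices used above: if $u$ represents $x$ then $gu \in V^\perp$ and $(I-w)(gu) = g(I-w)u \in g\Gamma = \Gamma$, so $gu$ is a legitimate representative of $g \cdot x$, and analogously on the dual side. Using that $g \in W$ preserves the canonical $\t \times \t^* \to \R$ pairing, equivariance is then a one-line computation:
\[
\beta(g\cdot x,\ g\cdot y) = \langle (I-w)(gu),\ gu^* \rangle = \langle g(I-w)u,\ gu^* \rangle = \langle (I-w)u,\ u^* \rangle = \beta(x,y).
\]
Since $\beta$ is non-degenerate, this $Z_W(w)$-invariance is equivalent to the two actions being mutually dual.

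The main obstacle here is cosmetic rather than substantive: one needs to present $\beta$ in a form in which $Z_W(w)$ visibly acts on representatives. Once that is arranged, the result follows from $W$-invariance of the canonical pairing together with the relation $gw=wg$, with no further input beyond Theorem~\ref{thm4} itself.
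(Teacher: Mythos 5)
Your argument is correct, and it runs on the same engine as the paper's proof -- the pairing is assembled from the $W$-invariant inner product and an operator that is a function of $w$, so it is automatically invariant under anything commuting with $w$ -- but you work in the dual model. The paper keeps everything on the character groups $\widehat{F_w}\cong \x_w(\Gamma^\vee)/(I-w)\Gamma^\vee$ and $\widehat{F_w^\vee}\cong \x_w(\Gamma)/(I-w)\Gamma$ and checks invariance of the twisted pairing $\langle x,\mbar(w)y\rangle$ using $g\mbar(w)=\mbar(w)g$, then transports the conclusion back to $F_w\times F_w^\vee$ through the double dual; you instead realise classes of $F_w$ and $F_w^\vee$ directly by lifts $u\in\x_w$ with $(I-w)u\in\Gamma$ (resp.\ $u^*$ with $(I-w^*)u^*\in\Gamma^\vee$) and pair them by $\langle (I-w)u,u^*\rangle\bmod\Z$, which replaces $\mbar(w)$ by $I-w$ and makes the equivariance computation a one-liner. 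Two small points deserve a sentence each in a final write-up. First, the ambiguity in the lift $u\in V^\perp$ is not just $\Gamma\cap V^\perp$ but all of $p_{\x_w}\Gamma$ (two lifts of the same class differ by a projection of a lattice vector); this is harmless because $(I-w)p_{\x_w}\gamma=(I-w)\gamma$, so integrality of $\langle\Gamma,\Gamma^\vee\rangle$ still gives well-definedness. Second, for "the actions are dual" to refer to the duality of Theorem \ref{thm4} you should note that your $\beta$ \emph{is} that pairing: this follows from Lemma \ref{mu projection}, since $\tfrac{1}{\mu}\mbar(w)$ inverts $I-w$ on $\x_w$, so $e^{\frac{2\pi i}{\mu}\langle a,\mbar(w)b\rangle}$ with $b=(I-w)u$ reduces exactly to $e^{2\pi i\langle a,u\rangle}$. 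With those remarks added, your route is a clean and slightly more economical variant of the paper's.
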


Moreover, by exploiting the relationship between invariants and coinvariants we establish the following formula for the cardinality of the component group.

\def\thmv{Let $w\in W$ with $w\neq I$ and let $r$ be the rank of $I-w$.
Let $g$ be the greatest common divisor of the $r\times r$-minors of the matrix of $I-w$ expressed in coordinates with respect to a basis for the dual lattice $\Gamma^\vee$.  Then
\[
g=|F_w|.
\]
}

\begin{theorem}
\label{Cardinality of component groups}
\label{thm5}\thmv
\end{theorem}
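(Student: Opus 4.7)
The plan is to identify $F_w^\vee$ with the torsion subgroup of the cokernel $\Gamma^\vee/(I-w)\Gamma^\vee$, read off its order from the Smith normal form of $I-w$, and then invoke Theorem \ref{thm4} to transport the count to $|F_w|$.

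First I would apply the snake lemma to the short exact sequence $0 \to \Gamma^\vee \to \t^* \to \MaxTorus^\vee \to 0$ with the vertical endomorphism $I-w$, producing
\[
0 \to \ker(I-w|_{\Gamma^\vee}) \to \ker(I-w|_{\t^*}) \to (\MaxTorus^\vee)^w \to \Gamma^\vee/(I-w)\Gamma^\vee \to \t^*/(I-w)\t^* \to \MaxTorus^\vee/(I-w)\MaxTorus^\vee \to 0.
\]
The image of $\ker(I-w|_{\t^*})$ in $(\MaxTorus^\vee)^w$ is precisely the identity component $(\MaxTorus^\vee)_1^w$, so the connecting homomorphism induces an injection $F_w^\vee \hookrightarrow \Gamma^\vee/(I-w)\Gamma^\vee$. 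Because $W$ acts orthogonally, $\t^* = \ker(I-w)\oplus\mathrm{im}(I-w)$, so $\t^*/(I-w)\t^*\cong\ker(I-w|_{\t^*})$ is a torsion-free real vector space and the injection factors through the torsion subgroup. Conversely, the projection of $\Gamma^\vee$ onto $\ker(I-w|_{\t^*})$ along $\mathrm{im}(I-w)$ is a full-rank lattice, so the next map in the snake sequence is injective on the free part of the cokernel. Combining these observations identifies $F_w^\vee$ with the full torsion subgroup of $\Gamma^\vee/(I-w)\Gamma^\vee$.

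Next, let $M$ denote the matrix of $I-w$ with respect to a basis for $\Gamma^\vee$; this is integral because $w$ preserves $\Gamma$ and hence $\Gamma^\vee$. Its Smith normal form is $\mathrm{diag}(d_1,\ldots,d_r,0,\ldots,0)$ with $d_1 | d_2 | \cdots | d_r$, so the cokernel is $\Z^{n-r}\oplus\bigoplus_{i=1}^{r}\Z/d_i\Z$, whose torsion subgroup has order $d_1\cdots d_r$. A classical property of the Smith normal form is that $d_1\cdots d_r$ equals the greatest common divisor of the $r\times r$ minors of $M$, which is $g$ by definition. Hence $|F_w^\vee|=g$, and applying Theorem \ref{thm4} gives $|F_w|=|F_w^\vee|=g$.

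The main subtlety is the snake-lemma bookkeeping in the middle step: specifically, verifying that the image of the connecting homomorphism is exactly the torsion subgroup of $\Gamma^\vee/(I-w)\Gamma^\vee$. The crucial input here is the orthogonal decomposition $\t^*=\ker(I-w)\oplus\mathrm{im}(I-w)$, which ensures that $\Gamma^\vee$ projects to a full-rank lattice in $\ker(I-w|_{\t^*})$. Once this identification is established, the remainder of the argument is standard invariant factor theory.
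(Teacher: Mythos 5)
Your proof is correct, but it reaches the conclusion by a genuinely different route from the paper's. The paper does not pass through $F_w^\vee$ or Theorem \ref{thm4} at all: it uses the identification $\widehat{F_w}\cong \x_w(\Gamma^\vee)/(I-w)\Gamma^\vee$ established earlier in the section to write $|F_w|$ directly as the lattice index $[\x_w(\Gamma^\vee):(I-w)\Gamma^\vee]$, and then computes that index by hand — choosing bases $v_1,\dots,v_r$ of $(I-w)\Gamma^\vee$ and $w_1,\dots,w_r$ of $\x_w(\Gamma^\vee)$, complementing to bases of $\Gamma^\vee$, and comparing top wedge products to show $g\mid i$ and $i\mid g$. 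That argument is in effect a self-contained proof, in this special case, of the classical fact you invoke wholesale: that the product of the nonzero invariant factors equals the gcd of the $r\times r$ minors. Your version outsources exactly that combinatorial core to the Smith normal form / determinantal divisor theory, and replaces the paper's character-theoretic identification of $\widehat{F_w}$ with a snake-lemma identification of $F_w^\vee$ as the torsion subgroup of $\Gamma^\vee/(I-w)\Gamma^\vee$; the price is that you then need Theorem \ref{thm4} to convert $|F_w^\vee|$ into $|F_w|$, which is legitimate since that theorem is proved independently and earlier. What your route buys is brevity and a little extra information (the full invariant factor decomposition of $F_w^\vee$, not just its order); what the paper's route buys is self-containedness and the avoidance of the duality theorem. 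Two small points of care in your write-up: the phrase ``injective on the free part of the cokernel'' should be read as ``the kernel of $\Gamma^\vee/(I-w)\Gamma^\vee\to \t^*/(I-w)\t^*$ is contained in the torsion subgroup,'' which holds because that kernel is $\x_w(\Gamma^\vee)/(I-w)\Gamma^\vee$ (finite) and the quotient $\Gamma^\vee/\x_w(\Gamma^\vee)$ embeds in the vector space $\t^{*w}$, hence is torsion-free; and the orthogonal decomposition $\t^*=\ker(I-w)\oplus\mathrm{im}(I-w)$ you use is exactly the normality of $w$ that the paper also relies on, so no new hypothesis is being smuggled in.
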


The above results allow us to determine the fixed sets and to show that $\MaxTorus^w$ and $(\MaxTorus^\vee)^w$ agree:

\def\thmii{For $w\in W$  the fixed sets $\MaxTorus^w$ and $(\MaxTorus^\vee)^w$ are non-canonically isomorphic as topological groups. In the special case of Weyl groups acting on maximal tori, this gives an isomorphism between the  $w$-fixed sets for the Langlands dual forms.}

\begin{theorem}
\label{homeomorphic fixed sets}
\label{thm2}\thmii
\end{theorem}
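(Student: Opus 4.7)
The plan is to combine the stated product decompositions $\MaxTorus^w\cong \MaxTorus^w_1\times F_w$ and $(\MaxTorus^\vee)^w\cong (\MaxTorus^\vee)^w_1\times F^\vee_w$ with Theorem \ref{thm4} and thereby reduce the problem to showing that the identity components $\MaxTorus^w_1$ and $(\MaxTorus^\vee)^w_1$ are isomorphic. Theorem \ref{thm4} already supplies a (non-canonical) isomorphism $F_w\cong F_w^\vee$, so what remains is to produce an isomorphism $\MaxTorus^w_1\cong (\MaxTorus^\vee)^w_1$ of topological groups.

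To identify $\MaxTorus^w_1$ concretely I would observe that it is the image of the fixed subspace $\t^w=\ker(I-w)$ in $\t/\Gamma$, and hence equals $\t^w/(\t^w\cap\Gamma)$; likewise $(\MaxTorus^\vee)^w_1 = (\t^*)^w/((\t^*)^w\cap\Gamma^\vee)$. Because $w$ preserves $\Gamma$, in a basis of $\Gamma$ the operator $I-w$ has integer entries, so $\t^w$ is a rational subspace and $\t^w\cap\Gamma$ is a full-rank lattice in it; the quotient is therefore a torus of dimension $\dim\t^w$. The same argument applied to $\Gamma^\vee\subset\t^*$ produces a torus of dimension $\dim(\t^*)^w$. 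Using the Euclidean inner product on $\t$ to identify $\t^*$ with $\t$, the dual action of $w$ on $\t^*$ becomes conjugate to the action on $\t$ — here the orthogonality hypothesis is essential — so $\dim(\t^*)^w=\dim\t^w$. Two real tori of the same dimension are isomorphic as topological groups, giving $\MaxTorus^w_1\cong(\MaxTorus^\vee)^w_1$.

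Taking the product of this isomorphism of identity components with the isomorphism of component groups from Theorem \ref{thm4} yields the required isomorphism $\MaxTorus^w\cong (\MaxTorus^\vee)^w$ of topological groups. The concluding remark about Langlands dual forms follows by specialising to the case where $W$ is a Weyl group and $\Gamma$ is the cocharacter lattice of one form, so that $\Gamma^\vee$ is the cocharacter lattice of the Langlands dual form and $\MaxTorus,\MaxTorus^\vee$ are the two maximal tori. I do not expect any of the individual steps to be technically hard; the main conceptual point is that both fixed sets split naturally into a continuous and a discrete piece, the continuous pieces match on purely dimensional grounds via orthogonality, and all the genuinely non-trivial duality information is absorbed into Theorem \ref{thm4}.
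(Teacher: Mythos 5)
Your proposal is correct and follows essentially the same route as the paper: decompose each fixed set as identity component times component group, invoke Theorem \ref{thm4} for the finite parts, and observe that the identity components are the images of $\t^w\cong(\t^*)^w$ under the respective quotient maps, hence tori of equal dimension. You simply spell out in more detail (rationality of $\t^w$, full rank of $\t^w\cap\Gamma$) what the paper leaves implicit.
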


\bigskip

For a Weyl group $W$ and maximal torus $\MaxTorus =\t/\Gamma$, the extended affine Weyl group is defined to be $\Gamma \rtimes W$. The $W$-equivariant $K$-theory of the maximal torus $\MaxTorus^\vee$ in the Langlands dual is identified with the $K$-theory of $C^*_r(\Gamma \rtimes W)$, while the $W$-equivariant $K$-homology of $\MaxTorus$ is identified with $K_*^{\Gamma \rtimes W}(\t)$ (see \cite{niblo2018poincare}). Hence the Baum-Connes isomorphism for $\Gamma \rtimes W$ corresponds to a pairing between the equivariant $K$-theory groups $K_W^*(\MaxTorus)$ and $K_W^*(\MaxTorus^\vee)$ which we showed in \cite{niblo2018poincare} is given by a Poincar\'e duality. In particular the pairing makes these two groups isomorphic up to torsion.

\bigskip

Returning to the case of $E_6$, our description of the centralisers and the fixed sets allows us to compute the action of the Weyl group $W$ on the inertia space (cf. Theorem \ref{K-theory calculation}) associated to the action of $W$ on the maximal torus, thereby obtaining a description of the extended quotient.
Using the equivariant Chern character, we compute (up to torsion) the $W$-equivariant $K$-theory for the maximal tori of both the simply connected and adjoint-type Lie groups of type $E_6$, exhibiting the isomorphism between these $K$-theory groups.

\bigskip

The isomorphism between these $K$-theory groups corresponds to a cohomological relation between the corresponding extended quotients for the actions of the Weyl group $W$ on the maximal tori.  
We refine this in the case of $E_6$ by showing that the isomorphism in cohomology arises from a homotopy equivalence at the level of sectors.

\def\thmiii{
Let $W$ be the Weyl group of type $E_6$ and let $\MaxTorus$ and $\MaxTorus^\vee$ denote the corresponding (real) maximal tori. For each $w\in W$ there is a homotopy equivalence
\[
\MaxTorus^w/Z_W(w)\sim (\MaxTorus^\vee)^w/Z_W(w).
\]
}

\begin{theorem}
\label{thm3}\thmiii
\end{theorem}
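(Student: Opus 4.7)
The plan is to decompose both quotients into sectors indexed by orbits of $Z_W(w)$ on the component groups of the fixed sets, and then to match these sectors pairwise using the duality established in Theorems \ref{thm4} and \ref{thm8}. By Theorem \ref{thm2}, $\mathcal{T}^w \cong \mathcal{T}^w_1 \times F_w$ and $(\mathcal{T}^\vee)^w \cong (\mathcal{T}^\vee)^w_1 \times F^\vee_w$, with the two identity-component tori having the same real dimension. The centraliser $Z_W(w)$ acts on each fixed set by permuting connected components through its action on $F_w$ (resp.\ $F^\vee_w$), while acting on each component by an affine transformation: the linear action on $\mathcal{T}^w_1$ composed with a translation depending on the component. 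For each orbit $[c] \in F_w/Z_W(w)$ with stabiliser $H_c \le Z_W(w)$, the contribution to the quotient is a twisted quotient of $\mathcal{T}^w_1$ by $H_c$, and analogously on the dual side. Matching sectors pairwise would then yield the desired homotopy equivalence on the union.

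First I would construct a bijection $F_w/Z_W(w) \leftrightarrow F^\vee_w/Z_W(w)$. By Theorem \ref{thm8} the two actions are Pontryagin dual, so a standard counting argument (Burnside's lemma together with the fact that any $g \in Z_W(w)$ has the same number of fixed points on a finite abelian group as on its dual) gives a bijection of orbit sets. Second, I would check that paired orbits can be chosen to have $Z_W(w)$-conjugate point-stabilisers, so the sector decompositions on the two sides use the same list of subgroup quotients. Third, since the identity-component tori $\mathcal{T}^w_1$ and $(\mathcal{T}^\vee)^w_1$ arise from the $w$-fixed subspaces of $\mathfrak{t}$ and $\mathfrak{t}^*$ with the induced orthogonal $Z_W(w)$-actions, they are dual (and therefore isomorphic) as $Z_W(w)$-representations, with lattices differing only up to isogeny; each matched pair of sectors then has the form $T/H$ versus $T/H$ with the same linear $H$-action and possibly different affine twists. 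The twists, being translations by $H$-invariant torsion points, can be deformed through the $H$-fixed subtorus to the trivial twist, which yields the sector-wise homotopy equivalence.

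The hard part will be the stabiliser-matching step, promoting the abstract orbit bijection from Theorem \ref{thm8} to one in which paired stabilisers are conjugate in $Z_W(w)$. Pontryagin duality only canonically pairs an orbit with the orbit of characters trivial on a specified subgroup, so the identification of stabilisers is not automatic and must be argued using additional structure on each component group. For $W$ of type $E_6$ this is handled case-by-case across the $25$ conjugacy classes, using the generators-and-relations description of the centralisers (Theorem \ref{thm7}), the cardinality formula (Theorem \ref{thm5}), and the explicit $Z_W(w)$-module structures on $F_w$ and $F^\vee_w$ provided by Theorem \ref{thm4}. Once the matching is verified on each sector the deformation argument above delivers the homotopy equivalence on the whole quotient, completing the proof.
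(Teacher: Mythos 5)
Your orbit-counting step is sound (and is essentially observed in the paper: the numbers of $Z_W(w)$-orbits on $F_w$ and $F_w^\vee$ agree because $|A^g|=|\widehat{A}^g|$ for any finite abelian $A$), but the final reduction --- that matched sectors are ``$T/H$ versus $T/H$ with the same linear $H$-action and possibly different affine twists,'' after which the twist can be deformed away --- contains two genuine gaps. First, the identity components $\MaxTorus^w_1$ and $(\MaxTorus^\vee)^w_1$ are quotients of the \emph{same} subspace $\t^w$ by the \emph{different} lattices $\t^w\cap\Gamma$ and $\t^w\cap\Gamma^\vee$, and commensurable $H$-invariant lattices in the same real representation need not be isomorphic as $\Z[H]$-modules. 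The paper's own discussion of the special elements makes this concrete: the $SU_3$-type and $PSU_3$-type $2$-tori both carry an $S_3$-action with the same linear part, yet the quotients are an equilateral triangle and a cone respectively --- homotopy equivalent only because both happen to be contractible, not because the actions are equivariantly isomorphic. This is exactly what occurs in the ramified-torus cases such as $D_4$, where the two sectors are $\Delta^2$ and $\Delta^2/\cZ$. Second, deforming the translation part of an affine action does \emph{not} preserve the homotopy type of the quotient: for $C_2$ acting on $\TT^2$ by $(x,y)\mapsto(-x,y+t)$, the quotient is a cylinder at $t=0$ and a Klein bottle at $t=\tfrac12$. So even after matching orbits and stabilisers, the sector-wise comparison cannot be finessed by a formal deformation; the twist genuinely changes which subgroup acts effectively (compare case $A_2^2\times A_1$, where $s_1s_2$ acts trivially on one component of $(\MaxTorus^\vee)^w$ but by a rotation on the others, turning a $C_2$-quotient into a $\D_3$-quotient --- both intervals, but only by computation).

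The paper does not attempt a general mechanism of this kind. It proves Theorem \ref{thm3} by splitting the $25$ classes into those with ramified torus, where $(\MaxTorus^\vee)^w/Z_W(w)=(\MaxTorus^w/Z_W(w))/\cZ$ and the extra $\cZ$-action is analysed isometrically on each already-computed quotient, and those with ramified component group, where the extra components of $(\MaxTorus^\vee)^w$ are exhibited explicitly and their stabilisers and affine actions computed one by one; the homotopy equivalences are then read off sector by sector from Tables 2 and the Section 4 computations. If you want to salvage your approach, you would need to replace the deformation step with a case analysis of precisely the kind the paper carries out, at which point the general framework adds little beyond the (correct and useful) orbit-counting observation.
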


This stratification phenomenon was introduced and studied in the case of $A_n$ in \cite{niblo2019stratified},
and we conjecture that this demonstrates a general principle for any compact connected semisimple Lie group:

\begin{Conjecture}[Comparison of sectors]
  For any compact connected semisimple Lie group, the homeomorphisms between $\MaxTorus^w, (\MaxTorus^\vee)^w$ provided by Theorem \ref{homeomorphic fixed sets} descend to  homotopy equivalences
\[
\MaxTorus^w/Z_W(w) \sim (\MaxTorus^\vee)^w/ Z_W(w).
\]
\end{Conjecture}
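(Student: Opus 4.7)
\medskip

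\noindent\emph{Proof proposal.}
The plan is to reduce the conjecture to a comparison of $Z_W(w)$-orbits on the finite component groups $F_w$ and $F_w^\vee$, then to match strata using the duality of Theorems~\ref{thm4} and~\ref{thm8}.

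First, use the splittings
\[
\MaxTorus^w\cong \MaxTorus^w_1\times F_w,\qquad (\MaxTorus^\vee)^w\cong (\MaxTorus^\vee)^w_1\times F^\vee_w,
\]
and observe that the linear action of $Z_W(w)$ on each identity component is determined by its action on the Lie algebra $\t^w$, independent of the choice of lattice. Hence $\MaxTorus^w_1$ and $(\MaxTorus^\vee)^w_1$ are canonically $Z_W(w)$-equivariantly isomorphic as tori, so the entire difference between the two orbit spaces is encoded in the dual finite $Z_W(w)$-sets $F_w$ and $F_w^\vee$.

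Second, stratify each orbit space according to orbits in the finite factor,
\[
\MaxTorus^w/Z_W(w)\;\cong\;\bigsqcup_{[f]\in F_w/Z_W(w)}\MaxTorus^w_1/\mathrm{Stab}_{Z_W(w)}(f),
\]
and similarly on the dual side. A dual pair of linear actions of a finite group $H$ on a finite abelian group has the same number of $h$-fixed points for every $h\in H$; combined with Burnside's lemma this gives $|F_w/Z_W(w)|=|F_w^\vee/Z_W(w)|$, and applying the same observation to every subgroup $H\leq Z_W(w)$ together with Mobius inversion on the subgroup lattice, one can produce a bijection between orbits under which corresponding stabilisers are $Z_W(w)$-conjugate. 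Upgrading this orbit-matching to a stratum-by-stratum homeomorphism then gives the result, because matched stabilisers act on the common identity component by conjugate representations, so each pair of matched strata is homeomorphic and the disjoint union assembles into the required homotopy equivalence.

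The main obstacle is the orbit-matching step: the $Z_W(w)$-sets $F_w$ and $F_w^\vee$ are in general not equivariantly isomorphic---duality interchanges a character with its inverse---so the required bijection of orbits is far from canonical, and producing it with stabilisers conjugate in $Z_W(w)$ rather than merely equicardinal is delicate. Carrying this out requires explicit knowledge of the centraliser structure, analogous to Theorem~\ref{thm7} for $E_6$, and extending the conjecture beyond $A_n$ and $E_6$ will require case-by-case analyses of component groups and stabiliser subgroups inside the centralisers for $E_7$, $E_8$, $F_4$, $G_2$ and the classical types $B_n$, $C_n$, $D_n$.
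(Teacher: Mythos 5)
This statement is stated in the paper as a \emph{conjecture}: the paper proves it only for $E_6$ (Theorem \ref{thm3}), and does so by explicitly computing all $25$ sectors on both sides (Sections \ref{fixed sets} and \ref{section: adjoint fixed sets}) rather than by any general structural argument. Your proposal attempts a general proof, and it has genuine gaps that the paper's own computations refute. First, the reduction to ``the entire difference is encoded in the finite sets $F_w$, $F_w^\vee$'' is not valid: the identity components $\MaxTorus^w_1=\t^w/(\Gamma\cap\t^w)$ and $(\MaxTorus^\vee)^w_1=\t^w/(\Gamma^\vee\cap\t^w)$ carry the same linear $Z_W(w)$-action on $\t^w$ but are quotients by \emph{different} lattices, and there is in general no equivariant isomorphism between them; the paper's discussion of the $SU_3$ versus $PSU_3$ tori (whose $S_3$-quotients are a triangle and a cone, respectively) shows that the resulting strata need not be homeomorphic, only homotopy equivalent, so your claim that ``each pair of matched strata is homeomorphic'' cannot be the mechanism. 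Second, the stratification $\MaxTorus^w/Z_W(w)\cong\bigsqcup_{[f]}\MaxTorus^w_1/\mathrm{Stab}(f)$ silently assumes the stabiliser of a component acts on it by the linear action on $\MaxTorus^w_1$; in fact it acts by affine maps whose translation parts depend on $f$ (see e.g.\ the $A_2^2$ case, where $s_1s_2$ acts on the component indexed by $\omega$ by translation by $(1,1,1,\omega^2,\omega,1)$, producing a quotient triangle of one third the area).

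The most serious gap is the orbit-matching step. Duality gives $|F_w^h|=|(F_w^\vee)^h|$ for each single element $h$, hence equality of orbit \emph{counts} by Burnside; but the Burnside-ring/M\"obius argument you invoke requires equal fixed-point counts for every \emph{subgroup} $H\leq Z_W(w)$, and for non-cyclic $H$ the invariants $A^H$ and coinvariants $A_H$ of a finite module need not have the same cardinality. The paper's $A_2^3$ case is an explicit counterexample to your conclusion: $G_{25}$ acts on $F_w\cong\F_3^3$ with orbits of sizes $1,1,1,24$, while the dual action on $F_w^\vee$ has orbits of sizes $1,8,9,9$. Both have four orbits (which is all the paper claims, and all that is needed there since both quotients are four points), but no bijection of orbits with $Z_W(w)$-conjugate stabilisers can exist. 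The paper itself emphasises that $F_w$ and $F_w^\vee$ are generally \emph{not} equivariantly isomorphic and that only the orbit counts coincide; this is precisely why the statement remains a conjecture and why the $E_6$ verification proceeds case by case.
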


\medskip

In Section \ref{applications}, we provide two main applications.   The first is the computation of the $K$-theory for the group $C^*$-algebras of the two extended affine Weyl groups of type $E_6$ 
showing that these agree.    
The second is a geometric description of the set of tempered representations in the Iwahori-spherical block of the $p$-adic adjoint group $E_6$.  For each of these applications, 
full use is made of our results in Table 2.

\medskip
The paper is organised as follows:
\setcounter{tocdepth}{1}
\renewcommand\contentsname{}
\vspace{-1cm}
\tableofcontents

\skiptoc\section*{Acknowledgement}The authors would like to thank the referee for their comments and the community of developers and mathematicians who write and maintain the GAP computer algebra system. While none of the arguments given here rely on computer calculations, and the conjugacy class representatives were built carefully by hand to simplify the construction of centralisers and their actions, we found the Lie Algebra data package provided by Willem Adriaan de Graaf and Thomas Breuer very helpful in our initial investigations.

\section{Component groups of fixed sets}\label{Component groups}

In this section we will consider the general case of a finite group $W$ acting orthogonally on a Euclidean vector space $\t$ of dimension $n$, preserving a lattice $\Gamma$. The action thus descends to the torus $\MaxTorus=\t/\Gamma$. Let $w$ be an element of $W$ and $\MaxTorus^w$ the fixed set of the action of $w$ on $\MaxTorus$.  Since the action is by automorphisms of the group $\MaxTorus$, in each case the fixed set is a (closed) subgroup of $\MaxTorus$ and is therefore isomorphic to the direct product of a finite abelian group with the identity component of $\MaxTorus^w$, which we denote $\MaxTorus^w_1$.  We note that $\MaxTorus^w_1$ is the image under the exponential map of the $1$-eigenspace of $w$ acting on the Lie algebra $\mathfrak t$. Let $F_w$ denote the group of components of the $w$-fixed set, that is
\[
F_w=\MaxTorus^w/ \MaxTorus^w_1
\]
so that $\MaxTorus^w\cong \MaxTorus^w_1\times F_w$.

We use the inner product to identify $\t$ with $\t^*$. Note that this identifies the $W$-action on $\t$ with the dual action on $\t^*$, preserving the dual lattice
\[
\Gamma^\vee=\{x\in \t : \langle x,y\rangle \in \Z,\; \forall y\in\Gamma\}.
\]
This induces an action of $W$ on the dual torus $\MaxTorus^\vee=\t/\Gamma^\vee$.
Let $F^\vee_w$ denote the component group for the action on the dual torus: $F^\vee_w=(\MaxTorus^\vee)^w/ (\MaxTorus^\vee)^w_1$.

\begin{theorem4}
\thmiv
\end{theorem4}

To prove the theorem we will construct a $\Ti$-valued pairing between the Pontryagin duals $\widehat{F_w}$ and $\widehat{F_w^\vee}$ thus establishing an isomorphism
\[
\widehat{F_w}\cong\widehat{\widehat{F_w^\vee}}\cong F_w^\vee.
\]

The Pontryagin dual $\widehat{F_w}$ is canonically isomorphic to the set of characters on $\MaxTorus^w$ which are trivial on the subgroup $\MaxTorus^w_1$.
The group of characters on $\MaxTorus$ is identified with the lattice $\Gamma^\vee$ via
\[
\chi_x(y+\Gamma):=e^{2\pi i \langle x,y\rangle}.
\]
The characters on $\MaxTorus^w$ are given by the quotient:
\[
\widehat{\MaxTorus^w}=\Gamma^\vee/((I-w)\Gamma^\vee)
\]
Hence the set of characters on $\MaxTorus^w$ vanishing on $\MaxTorus^w_1$ is given by
\[
\{x \in\Gamma^\vee : \chi_x \equiv 1 \text{ on } \MaxTorus^w_1\}/((I-w)\Gamma^\vee).
\]
Note that $\chi_x \equiv 1 \text{ on } \MaxTorus^w_1$ if and only if $x$ is orthogonal to the fixed set $\t^w$, so we obtain the identification:
\[
\widehat{F_w}\cong \{x \in\Gamma^\vee : x\perp \t^w\}/((I-w)\Gamma^\vee).
\]
Likewise for the dual ${F_w^\vee}$ we have
\[
\widehat{F^\vee_w}\cong \{y \in\Gamma : y\perp \t^w\}/((I-w)\Gamma).
\]
We introduce the notation
\begin{align*}
    \x_w&=\{ y\in \t : y\perp \t^w\}\\
    \x_w(\Gamma)&=\x_w\cap \Gamma\\
    \x_w(\Gamma^\vee)&=\x_w\cap \Gamma^\vee.
\end{align*}

Following Reeder, let $m$ denote the minimal polynomial of $w$ and let $\mdot(t)=\dfrac{m(t)-m(1)}{t-1}$. Reeder restricts to the elliptic case, i.e.\ where $m(1)\neq 0$. In the non-elliptic case we will define $\mddot(t)=\dfrac{\mdot(t)-\mdot(1)}{t-1}$ and in general we set
\[
\mbar=\begin{cases}\mdot & m(1)\neq 0\\\mddot & m(1)= 0.\end{cases}
\]

\begin{definition}
For $x\in \Gamma^\vee$ and $y\in \Gamma$ we define a twisted pairing
\[
\langle x,y\rangle_w:=\langle x,\mbar(w)y\rangle.
\]
\end{definition}

We will consider this integer valued pairing modulo $\mu$ where
\[
\mu=\begin{cases}m(1) &\text{if } m(1)\neq 0\\\mdot(1) & \text{otherwise}.\end{cases}
\]
Note that $\mu$ is always non-zero since $m$ is the minimal polynomial, and $w$ is a normal operator. 

The following Lemma shows that $\mbar(w)$ provides an inverse in the elliptic case, and a partial inverse in general, to $I-w$.
\begin{lemma}\label{mu projection}
The operator $\mbar(w)(I-w)$ is $\mu p_{\x_w}$ where  $p_{\x_w}$ denotes the orthogonal projection onto $\x_w$.
\end{lemma}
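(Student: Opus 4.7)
The plan is to reduce the identity $\bar m(w)(I-w)=\mu p_{\x_w}$ to elementary polynomial identities evaluated at $w$, treating the elliptic and non-elliptic cases separately, and then exploit the normality of $w$ to analyse $\dot m(w)$ in the non-elliptic case. Throughout, the key structural fact is that $w$ is orthogonal, hence normal, so that $\t$ decomposes orthogonally as $\t=\t^w\oplus\x_w$ with both summands $w$-invariant, and the minimal polynomial $m$ has simple roots.

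\medskip
\textbf{Elliptic case ($m(1)\neq 0$).} Here $1$ is not an eigenvalue of $w$, so $\t^w=0$, $\x_w=\t$, and $p_{\x_w}=I$. The definition $\dot m(t)=\frac{m(t)-m(1)}{t-1}$ gives the polynomial identity $(t-1)\dot m(t)=m(t)-m(1)$. Substituting $w$ and using $m(w)=0$ yields $(w-I)\dot m(w)=-m(1)I$, i.e.\ $\dot m(w)(I-w)=m(1)\,I=\mu\,p_{\x_w}$.

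\medskip
\textbf{Non-elliptic case ($m(1)=0$).} Since $w$ is normal, $1$ is a simple root of $m$, so $m(t)=(t-1)\dot m(t)$ with $\dot m(1)\neq 0$; thus $\mu=\dot m(1)$ and $\bar m=\ddot m$. The identity $(t-1)\ddot m(t)=\dot m(t)-\dot m(1)$, evaluated at $w$, rearranges to
\[
\ddot m(w)(I-w)=\mu\,I-\dot m(w).
\]
It therefore suffices to show that $\dot m(w)=\mu(I-p_{\x_w})$. For this I would argue:
\begin{itemize}
\item $\dot m(w)$ maps into $\t^w$, because $(w-I)\dot m(w)=m(w)=0$;
\item $\dot m(w)$ preserves the $w$-invariant orthogonal decomposition $\t=\t^w\oplus\x_w$, so on $\x_w$ its image lies in $\t^w\cap\x_w=0$, i.e.\ it vanishes on $\x_w$;
\item on $\t^w$ the operator $w$ acts as $I$, so $\dot m(w)=\dot m(1)\,I=\mu\,I$ there.
\end{itemize}
Combining these gives $\dot m(w)=\mu(I-p_{\x_w})$, and substitution completes the proof.

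\medskip
I do not expect a serious obstacle here: the content is really just the polynomial identities that define $\dot m$ and $\ddot m$, together with the fact that normality of $w$ forces simple roots of $m$ and an orthogonal $w$-invariant decomposition $\t=\t^w\oplus\x_w$. The only point requiring a little care is ensuring that $\dot m(w)$ indeed preserves $\x_w$ (any polynomial in $w$ preserves every $w$-invariant subspace) and that $1$ is a simple root of $m$ in the non-elliptic case, both of which follow immediately from $w$ being orthogonal.
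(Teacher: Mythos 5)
Your proof is correct and follows essentially the same route as the paper: the same case split on $m(1)$, the same polynomial identities $(t-1)\mdot(t)=m(t)-m(1)$ and $(t-1)\mddot(t)=\mdot(t)-\mdot(1)$, and the same use of normality to get the orthogonal $w$-invariant decomposition $\t=\t^w\oplus\x_w$. The only (harmless) variation is in the non-elliptic case: the paper shows $\mdot(w)$ vanishes on $\x_w$ by identifying $\x_w$ with $\mathrm{im}(I-w)$ and using $\mdot(w)(I-w)=0$, whereas you deduce the slightly stronger identity $\mdot(w)=\mu(I-p_{\x_w})$ from invariance of the decomposition.
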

\begin{proof}
Clearly $\mbar(w)(I-w)=0$ on the fixed set $\t^w$ which is the orthogonal complement of $\x_w$, so it suffices to show that $\mbar(w)(I-w)y=\mu y$ for $y\in \x_w$.

In the elliptic case the result is immediate from the fact that $\mdot(t)(1-t)=\mu-m(t)$ and $m(w)=0$.

In the non-elliptic case we have $\mddot(t)(1-t)=\mu-\mdot(t)$.  The operator $\mdot(w)$ satisfies the equation
\[
\mdot(w)(I-w)=m(1)-m(w)=0.
\]
The image of $I-w$ is precisely $\x_w$ since $w$ is a normal operator and $\t^w=\ker(I-w)$ hence for $y\in \x_w$
\[
\mddot(w)(I-w)y=\mu y-\mdot(w)y=\mu y.
\]
\end{proof}

\begin{lemma}
The twisted pairing $\langle-,-\rangle_w$ descends to a well-defined pairing
\[
\widehat{F_w}\times\widehat{F^\vee_w}\to \Z/\mu\Z.
\]
\end{lemma}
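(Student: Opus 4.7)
The plan is to verify two things: that the pairing $\langle x,\mbar(w)y\rangle$ takes integer values on $\Gamma^\vee\times\Gamma$, and that replacing $x$ by $x+(I-w)x'$ with $x'\in\Gamma^\vee$, or $y$ by $y+(I-w)y'$ with $y'\in\Gamma$, changes the value by a multiple of $\mu$. Integrality is immediate from the fact that $\mbar\in\Z[t]$: since $w$ preserves $\Gamma$, its minimal polynomial $m$ lies in $\Z[t]$, and successive division by $(t-1)$ (which is permissible because the dividend vanishes at $1$ and $t-1$ is monic) keeps us in $\Z[t]$. Hence $\mbar(w)$ preserves both $\Gamma$ and $\Gamma^\vee$, and $\langle x,\mbar(w)y\rangle\in\Z$ by definition of the dual lattice. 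The constant $\mu$ is likewise an integer.

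For invariance in the second slot, I would apply Lemma \ref{mu projection} directly: for $y'\in\Gamma$, one has $\mbar(w)(I-w)y' = \mu\,p_{\x_w}(y')$, and since $p_{\x_w}$ is self-adjoint with $x\in\x_w$,
\[
\langle x,\mbar(w)(I-w)y'\rangle = \mu\langle x,p_{\x_w}(y')\rangle = \mu\langle p_{\x_w}(x),y'\rangle = \mu\langle x,y'\rangle,
\]
which is plainly a multiple of $\mu$.

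For invariance in the first slot the key observation is that, $w$ being orthogonal, $(I-w)^T = I-w^{-1} = -w^{-1}(I-w)$. Hence for $x'\in\Gamma^\vee$, moving $(I-w)$ across the pairing and then applying Lemma \ref{mu projection} yields
\[
\langle(I-w)x',\mbar(w)y\rangle = \langle x',-w^{-1}(I-w)\mbar(w)y\rangle = -\mu\langle x',w^{-1}p_{\x_w}(y)\rangle = -\mu\langle x',w^{-1}y\rangle,
\]
where $y\in\x_w$ is used in the last equality. Since $w^{-1}$ preserves $\Gamma$, we have $w^{-1}y\in\Gamma$ and the remaining inner product lies in $\Z$, so the expression vanishes mod $\mu$.

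There is no serious obstacle: the identity $\mbar(w)(I-w)=\mu p_{\x_w}$ handles the second slot immediately, and the orthogonality identity $(I-w)^T=-w^{-1}(I-w)$ reduces the first-slot calculation to the same identity up to a sign and a rotation by $w^{-1}$. The only mild subtlety is remembering to feed $\x_w$-membership into $p_{\x_w}$ on the correct side of the inner product in each case.
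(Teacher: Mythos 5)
Your proof is correct and follows essentially the same route as the paper: both slots are handled by the identity $\mbar(w)(I-w)=\mu p_{\x_w}$ from Lemma \ref{mu projection}, with the first slot reduced to the second via the adjoint relation for the orthogonal operator $w$ (your $-w^{-1}(I-w)$ manipulation is the same computation the paper writes as $(I-w)x=(w^{-1}-I)wx$). Your explicit check that $\mbar\in\Z[t]$, hence that the pairing is integer-valued, is a small welcome addition that the paper leaves implicit.
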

\begin{proof}
Recall that $\widehat{F_w}$ is identified with $\x_w(\Gamma^\vee)/(I-w)\Gamma^\vee$ and $\widehat{F^\vee_w}$ is identified with $\x_w(\Gamma)/(I-w)\Gamma$.

Let $x\in \x_w(\Gamma^\vee)$ and $y\in \Gamma$.  Then
\begin{align*}
    \langle x,(I-w)y\rangle_w 
    &= \langle x,\mbar(w)(I-w)y\rangle\\
    &= \langle x,\mu p_{\x_w} y\rangle\\
    &= \langle p_{\x_w} x,\mu y\rangle\\
     &= \mu\langle x,y\rangle\in \mu\Z.
\end{align*}
Similarly if $x\in \Gamma^\vee$ and $y\in \x_w(\Gamma)$ then 
\begin{align*}
    \langle (I-w)x,y\rangle_w 
    &= \langle (I-w)x,\mbar(w)y\rangle\\
    &= \langle (w^{-1}-I)wx,\mbar(w)y\rangle\\
    &= \langle wx,(w-I)\mbar(w) y\rangle\\
    &= \langle wx,-\mu p_{\x_w}y\rangle\\
     &= -\mu\langle wx,y\rangle\in \mu\Z.
\end{align*}
Hence the pairing is well defined on the quotients.
\end{proof}

We note that since $\x_w(\Gamma)$ contains $(I-w)\Gamma$ it spans the space $\x_w=(I-w)\t$.  In particular $\x_w(\Gamma)$ is a lattice in the space $\x_w$, and similarly for $\x_w(\Gamma^\vee)$.

We consider the lattices $(\x_w(\Gamma))^\vee=\{x\in \x_w : \langle x,\x_w(\Gamma)\rangle \subseteq \Z\}$ and $(\x_w(\Gamma^\vee))^\vee=\{y\in \x_w : \langle \x_w(\Gamma^\vee),y\rangle \subseteq \Z\}$.  Since $\x_w(\Gamma^\vee)$ lies in $\Gamma^\vee$ it pairs integrally with $\Gamma$ and hence $\x_w(\Gamma^\vee)\subseteq (\x_w(\Gamma))^\vee$.  Similarly $\x_w(\Gamma)\subseteq (\x_w(\Gamma^\vee))^\vee$.
Let
\begin{align*}
    \t^w(\Gamma)&=\t^w\cap \Gamma,\\ \t^w(\Gamma^\vee)&=\t^w\cap \Gamma^\vee.
\end{align*}

\begin{lemma}\label{Projection of Gamma^vee}
The projection $p_{\x_w}$ gives  maps $\Gamma^\vee\to (\x_w(\Gamma))^\vee$ and $\Gamma\to (\x_w(\Gamma^\vee))^\vee$ which induce isomorphisms
\[
\Gamma^\vee/\t^w(\Gamma^\vee)\cong (\x_w(\Gamma))^\vee.
\]
\[
\Gamma/\t^w(\Gamma)\cong (\x_w(\Gamma^\vee))^\vee.
\]
\end{lemma}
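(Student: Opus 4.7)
The plan is to establish the first isomorphism $\Gamma^\vee/\t^w(\Gamma^\vee)\cong (\x_w(\Gamma))^\vee$; the second will follow by the symmetric argument with $\Gamma$ and $\Gamma^\vee$ interchanged. I begin with the well-definedness and kernel of the map. For $x\in\Gamma^\vee$ and $y\in\x_w(\Gamma)\subseteq\Gamma$, since $y\in\x_w$ one has $\langle p_{\x_w}(x),y\rangle = \langle x,p_{\x_w}(y)\rangle = \langle x,y\rangle\in\Z$, so $p_{\x_w}$ does map $\Gamma^\vee$ into $(\x_w(\Gamma))^\vee$. The kernel of the restriction of $p_{\x_w}$ to $\Gamma^\vee$ is $\Gamma^\vee\cap\t^w = \t^w(\Gamma^\vee)$, giving an injection $\Gamma^\vee/\t^w(\Gamma^\vee)\hookrightarrow (\x_w(\Gamma))^\vee$.

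The substantive step is surjectivity, which I would prove via a double-dual argument in $\x_w$. Because $w$ has finite order and preserves $\Gamma^\vee$, it is represented by an integer matrix, so both $\t^w$ and $\x_w=(I-w)\t$ are rational subspaces with respect to $\Gamma^\vee$, and by the symmetric argument also with respect to $\Gamma$. It follows that $\t^w(\Gamma^\vee)$ is a full-rank lattice in $\t^w$, whence the image $p_{\x_w}(\Gamma^\vee)$ is a genuine lattice in $\x_w$; similarly $\x_w(\Gamma)$ is a full-rank lattice in $\x_w$, so $(\x_w(\Gamma))^\vee$ is a well-defined lattice in $\x_w$. To compute the dual of $p_{\x_w}(\Gamma^\vee)$ inside $\x_w$, note that for $z\in\x_w$ we have $\langle z,p_{\x_w}(x)\rangle = \langle z,x\rangle$, so the condition $\langle z,p_{\x_w}(\Gamma^\vee)\rangle\subseteq\Z$ is equivalent to $z\in\Gamma\cap\x_w = \x_w(\Gamma)$. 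Thus $(p_{\x_w}(\Gamma^\vee))^\vee = \x_w(\Gamma)$, and applying the lattice double-dual involution in $\x_w$ yields $p_{\x_w}(\Gamma^\vee) = (\x_w(\Gamma))^\vee$, which is the required surjectivity.

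The main obstacle is the verification of the rationality/full-rank conditions: without knowing that $\t^w(\Gamma^\vee)$ spans $\t^w$, the projection $p_{\x_w}(\Gamma^\vee)$ might fail to be discrete and the double-dual step would break down. The finite-order hypothesis on $w$, which forces an integer-matrix representation of $w$ whose $1$-eigenspace is cut out by a rational linear system, is precisely what provides this; once rationality is in place, the surjectivity argument is a clean consequence of Pontryagin-style duality for lattices in a Euclidean space.
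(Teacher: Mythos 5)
Your proof is correct, and while the first half (well-definedness of $p_{\x_w}$ on $\Gamma^\vee$ and the identification of the kernel as $\t^w(\Gamma^\vee)$) coincides with the paper's, your surjectivity argument takes a genuinely different route. The paper constructs an explicit preimage: it uses the fact that $\Gamma/\x_w(\Gamma)$ is free to split off a complement, extends a given $x\in(\x_w(\Gamma))^\vee$ to a functional $y\mapsto\langle x,\pi(y)\rangle$ on all of $\Gamma$ via the resulting retraction $\pi$, represents that functional by some $x'\in\Gamma^\vee$ using the perfect pairing, and checks $p_{\x_w}x'=x$ because both sides pair identically against the full-rank lattice $\x_w(\Gamma)$ in $\x_w$. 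You instead prove the set-level identity $p_{\x_w}(\Gamma^\vee)=(\x_w(\Gamma))^\vee$ by computing $(p_{\x_w}(\Gamma^\vee))^\vee=\x_w(\Gamma)$ from the self-adjointness of $p_{\x_w}$ together with $(\Gamma^\vee)^\vee=\Gamma$, and then applying the double-dual involution for full-rank lattices. The trade-off is exactly the one you flag: your route requires knowing that $p_{\x_w}(\Gamma^\vee)$ is discrete and spans $\x_w$, which you correctly reduce to the rationality of $\t^w$ and $\x_w$ with respect to the $w$-invariant lattices (concretely, $\t^w(\Gamma^\vee)\oplus\x_w(\Gamma^\vee)$ has finite index $k$ in $\Gamma^\vee$, so $p_{\x_w}(\Gamma^\vee)\subseteq\frac1k\x_w(\Gamma^\vee)$ is discrete); the paper's route avoids any discreteness question for the projected group at the cost of choosing a splitting. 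Your version has the mild bonus of recording the clean identities $p_{\x_w}(\Gamma^\vee)=(\x_w(\Gamma))^\vee$ and $(p_{\x_w}(\Gamma^\vee))^\vee=\x_w(\Gamma)$ as statements about lattices in $\x_w$, which make the duality between the two displayed isomorphisms transparent.
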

\begin{proof}
If $x\in \Gamma^\vee$ then $\langle x,y\rangle\in \Z$ for all $y\in \x_w(\Gamma)$.  Now
\[
\langle p_{\x_w}x,y\rangle=\langle x,p_{\x_w}y\rangle=\langle x,y\rangle\in \Z
\]
and $p_{\x_w}x\in \x_w$ so $p_{\x_w}x\in (\x_w(\Gamma))^\vee$.  Clearly this map has kernel $\t^w(\Gamma^\vee)$.

It remains to check surjectivity.  Since $\x_w(\Gamma)$ is a lattice in $\x_w$ and is the intersection of $\x_w$ with $\Gamma$ it follows that $\x_w$ is complemented in $\Gamma$. Picking a complement for $\x_w(\Gamma)$, let $\pi:\Gamma\to \x_w(\Gamma)$ denote the retraction obtained by killing the complement.

Given any $x\in (\x_w(\Gamma))^\vee$ consider the homomorphism from $\Gamma$ to $\Z$ defined by
\[
y\mapsto \langle x,\pi(y)\rangle.
\]
Since there is a perfect pairing between $\Gamma$ and $\Gamma^\vee$, there exists $x'\in \Gamma^\vee$ such that $\langle x,\pi(y)\rangle=\langle x',y\rangle$.  Now for all $y\in \x_w(\Gamma)$ we have
\[
\langle p_{\x_w}x',y\rangle = \langle x',p_{\x_w}y\rangle = \langle x',y\rangle = \langle x,\pi(y)\rangle=\langle x,y\rangle.
\]
Thus $x=p_{\x_w}x'$ which is in the image.

Exchanging the roles of $\Gamma$ and $\Gamma^\vee$ gives the dual case.
\end{proof} 

\begin{proposition}
The pairing
\[
\widehat{F_w}\times\widehat{F^\vee_w}\to \Z/\mu\Z.
\]
is left and right non-degenerate.
\end{proposition}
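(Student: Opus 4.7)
The plan is to exploit Lemma \ref{mu projection}, which identifies $\mbar(w)$ as $\mu$ times the inverse of $I-w$ on $\x_w$. Since $\mbar(w)(I-w)=\mu p_{\x_w}$ and both operators commute, $(I-w)$ is invertible on $\x_w$ with rational inverse $\mbar(w)/\mu$. Combining this with Lemma \ref{Projection of Gamma^vee}, which identifies the dual lattices $(\x_w(\Gamma))^\vee$ and $(\x_w(\Gamma^\vee))^\vee$ with the projections $p_{\x_w}\Gamma^\vee$ and $p_{\x_w}\Gamma$ respectively, should allow me to extract an element of the relevant $(I-w)$-image from any vector that pairs trivially.

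For right non-degeneracy, suppose $y\in\x_w(\Gamma)$ satisfies $\langle x,\mbar(w)y\rangle\in\mu\Z$ for all $x\in\x_w(\Gamma^\vee)$. Substituting $\mbar(w)y=\mu(I-w)^{-1}y$ and dividing by $\mu$, the condition becomes $\langle x,(I-w)^{-1}y\rangle\in\Z$ for all $x\in\x_w(\Gamma^\vee)$, i.e.\ $(I-w)^{-1}y\in(\x_w(\Gamma^\vee))^\vee$. By Lemma \ref{Projection of Gamma^vee} this equals $p_{\x_w}\Gamma$, so there exists $v\in\Gamma$ with $(I-w)^{-1}y=p_{\x_w}v$. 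Applying $I-w$ and using that $I-w$ vanishes on $\t^w$ (hence $(I-w)p_{\x_w}=I-w$), we obtain $y=(I-w)v\in(I-w)\Gamma$, which is zero in $\widehat{F_w^\vee}$.

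For left non-degeneracy, the same idea works after transposing $\mbar(w)$ across the inner product. Since $w$ is orthogonal, $\mbar(w)^*=\mbar(w^{-1})$; taking the adjoint of Lemma \ref{mu projection} gives the symmetric identity $\mbar(w^{-1})(I-w^{-1})=\mu p_{\x_w}$, so $\mbar(w^{-1})=\mu(I-w^{-1})^{-1}$ on $\x_w$. The triviality condition on $x\in\x_w(\Gamma^\vee)$ becomes $(I-w^{-1})^{-1}x\in(\x_w(\Gamma))^\vee=p_{\x_w}\Gamma^\vee$, yielding $v\in\Gamma^\vee$ with $x=(I-w^{-1})v$. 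Setting $u=w^{-1}v\in\Gamma^\vee$ rewrites this as $x=(w-I)u=-(I-w)u\in(I-w)\Gamma^\vee$, so $x$ vanishes in $\widehat{F_w}$.

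The main obstacle is really just bookkeeping: being careful about which operators are integral and which are merely rational, and handling the asymmetry between the two sides of the pairing via the orthogonality of $w$. The substantive content is already contained in the two preceding lemmas, so once the translation of the pairing into a statement about membership in a dual lattice is set up, the rest follows from a direct application of Lemma \ref{Projection of Gamma^vee}.
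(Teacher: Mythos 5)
Your proof is correct and follows essentially the same route as the paper's: translate the vanishing condition into membership of $\tfrac{1}{\mu}\mbar(w)y$ (resp.\ $\tfrac{1}{\mu}\mbar(w)^*x$) in the dual lattice identified by Lemma \ref{Projection of Gamma^vee}, then undo the action of $\mbar(w)$ on $\x_w$ via Lemma \ref{mu projection} to land in $(I-w)\Gamma$ or $(I-w)\Gamma^\vee$. The only cosmetic difference is that you phrase the cancellation as applying the inverse of $I-w$ on $\x_w$, where the paper instead cancels $\mbar(w)$ using its injectivity on $\x_w$; these are the same step.
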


\begin{proof}
Let $x\in\x_w(\Gamma^\vee)$ and suppose that $\langle x,y\rangle_w\cong 0$ mod $\mu$ for all $y\in \x_w(\Gamma)$, that is $\langle x,\frac{1}{\mu}\mbar(w)y\rangle\in \Z$ for all $y$ in $\x_w(\Gamma)$.

Hence $\frac{1}{\mu}\mbar(w)^* x$ pairs integrally with all $y$ in $\x_w(\Gamma)$, and lies in the space $\x_w$, so $\frac{1}{\mu}\mbar(w)^* x\in (\x_w(\Gamma))^\vee$.

By Lemma \ref{Projection of Gamma^vee} $(\x_w(\Gamma))^\vee=p_{\x_w}\Gamma^\vee$ and by Lemma \ref{mu projection}
\[
\mu p_{\x_w}=\mbar(w)(I-w) = (\mbar(w)(I-w))^*
\]
so we have
\[
\mbar(w)^* x \in \mu p_{\x_w}\Gamma^\vee = \mbar(w)^*(I-w)^*\Gamma^\vee
\]
since $\mbar(w)$ and $I-w$ commute.

Now $x,(I-w)^*\Gamma^\vee$ lie in $\x_w$ and $\mbar(w)^*$ is injective on this space (again by Lemma \ref{mu projection}) giving
\[
x\in (I-w)^*\Gamma^\vee = (I-w^{-1})\Gamma^\vee=(w-I)w^{-1}\Gamma^\vee=(I-w)\Gamma^\vee.
\]
Thus the pairing is non-degenerate on the left.

Now let $y\in \x_w(\Gamma)$ and suppose that $\langle x,y\rangle_w\cong 0$ mod $\mu$ for all $x\in \x_w(\Gamma^\vee)$, that is $\langle x,\frac{1}{\mu}\mbar(w)y\rangle\in \Z$ for all $x$ in $\x_w(\Gamma^\vee)$.

Hence $\frac{1}{\mu}\mbar(w)y\in (\x_w(\Gamma^\vee))^\vee=p_{\x_w}\Gamma$. Then
\[
\mbar(w)y\in \mu p_{\x_w}\Gamma = \mbar(w)(I-w)\Gamma
\]
and as $\mbar(w)$ is injective on $\x_w$ we have $y\in (I-w)\Gamma$. Hence the pairing is also non-degenerate on the right.
\end{proof}

\begin{proof}[Proof of Theorem \ref{Component group duality}]
As noted above it suffices to prove that $\widehat{F_w}$ is the Pontryagin dual of $\widehat{F_w^\vee}$.

Define a pairing $\widehat{F_w}\times\widehat{F^\vee_w}\to \Ti$ by
\[
\chi(x+(I-w)\Gamma^\vee, y+(I-w)\Gamma):=e^{\frac{2\pi i}{\mu} \langle x,y\rangle_w}.
\]
Fixing an element of $\widehat{F_w}$ this gives a character on $\widehat{F_w^\vee}$, hence we have a homomorphism from $\widehat{F_w}$ to the Pontryagin dual of $\widehat{F_w^\vee}$.  Left non-degeneracy of the pairing $\langle-,-\rangle_w$ implies injectivity of this homomorphism.

Likewise fixing an element of $\widehat{F_w^\vee}$ gives a character on $\widehat{F_w}$, hence we have a homomorphism from $\widehat{F_w^\vee}$ to the Pontryagin dual of $\widehat{F_w}$.  Right non-degeneracy again implies injectivity.

Combining these two maps we deduce that
\[
|\widehat{F_w}|\leq |\widehat{\widehat{F_w^\vee}}|=|\widehat{F_w^\vee}|\leq |\widehat{\widehat{F_w}}|=|\widehat{F_w}|.
\]
Since the cardinalities are equal the injections must be isomorphisms.
\end{proof}

The following result gives a tool for computing the cardinality of these groups, which is relevant in the computation of $K$-theory.
It may be useful to bear in mind the following example.

\begin{example}
The conjugacy class representative $s_0s_1s_5s_3$ of type $A_1^4$ is represented by the matrix 
\[
M=\begin{pmatrix}
-1 & 1 & 0 & 0 & 0 & -1 \\
0 & 1 & 0 & 0 & 0 & -2 \\
0 & 1 & -1 & 1 & 0 & -2 \\
0 & 0 & 0 & 1 & 0 & -2 \\
0 & 0 & 0 & 1 & -1 & -1 \\
0 & 0 & 0 & 0 & 0 & -1 \\
\end{pmatrix}
\]
with respect to the lattice $\Gamma$ and its transpose with respect to the lattice $\Gamma^\vee$. The rank of $I-M$ is $4$ and the greatest common divisors of the $4\times 4$ minors is easily seen to be $4$ for both $I-M$ and $I-M^T$. As we will establish in the next theorem, this computes the order of the component group of the  corresponding fixed set.

This example is simplified by the fact that the operator is self adjoint, ensuring that the dual matrices over $\Gamma$ and $\Gamma^\vee$ are transpose. In general, the matrices with respect to the dual bases are related by the formula $(I-M)\rightarrow (I-M^{-1})^T$, however the argument given in Theorem \ref{thm2} below shows that the $\gcd$ of the corresponding minors is still the same in these two matrices.
\end{example}

\begin{theorem5}
\thmv
\end{theorem5}

\begin{proof}
Let $i=|F_w|=[\x_w(\Gamma^\vee):(I-w)\Gamma^\vee]$.

Let $v_1,\dots, v_r$ be a basis for $(I-w)\Gamma^\vee$, and let $w_1\dots,w_r$ be a basis for $\x_w(\Gamma^\vee)$.

Expressing these vectors in coordinates with respect to a basis for $\Gamma^\vee$, each element of the basis $\{v_i\}$ lies in the integer column span of $I-w$, so the greatest common divisor of the $r\times r$-minors of the matrix $(v_1|\dots| v_r)$ must be divisible by $g$.  But conversely each column of $I-w$ can be written in terms of the basis, hence we deduce that $g$ equals the greatest common divisor of the $r\times r$-minors of the matrix $(v_1|\dots| v_r)$.

The subgroup $\x_w(\Gamma^\vee)=\x_w\cap\Gamma^\vee$ has a complement $\Lambda$ in $\Gamma^\vee$ and the index $i$ of $(I-w)\Gamma^\vee$ in the lattice $\x_w(\Gamma^\vee)$ equals the index of $(I-w)\Gamma^\vee\oplus \Lambda$ in $\Gamma^\vee$. Let $x_1\dots,x_{n-r}$ be a basis of $\Lambda$, so $w_1\dots,w_r,x_1,\dots x_{n-r}$ gives a basis for $\Gamma^\vee$ and hence the corresponding $n\times n$ matrix has determinant 1.

The elements $v_1,\dots,v_r, x_1,\dots, x_{n-r}$ form a basis for $(I-w)\Gamma^\vee\oplus \Lambda$ which has index $i$ in $\Gamma^\vee$, hence the corresponding matrix has determinant $i$.  But the determinant can be built from the minors of $(v_1|\dots| v_r)$, so must be divisible by $g$.  That is $g | i$.

Now consider exterior products:  The determinants are given by $v_1 \wedge \dots  \wedge v_r \wedge x_1 \wedge \dots  \wedge x_{n-r}$ and by $w_1 \wedge \dots  \wedge w_r \wedge x_1 \wedge \dots  \wedge x_{n-r}$ thus
\[
v_1  \wedge \dots  \wedge v_r \wedge x_1 \wedge \dots  \wedge x_{n-r} = i (w_1 \wedge  \dots  \wedge w_r \wedge x_1 \wedge \dots  \wedge x_{n-r})
\]

As $v_1,\dots, v_r$ and $w_1,\dots, w_r$ span the same $r$-dimensional subspace of $\t^*$ it follows that $v_1 \wedge  \dots  \wedge v_r$ is a multiple of $w_1  \wedge \dots  \wedge w_r$, and from the above equation we see that the coefficient is $i$:
\[
v_1 \wedge \dots  \wedge v_r = i (w_1  \wedge \dots  \wedge w_r).
\]

The coefficients of $v_1 \wedge \dots  \wedge v_r$ in the standard basis $\{ e_{i_1} \wedge \dots  \wedge e_{i_r} \}$ for the exterior algebra of $\Gamma^\vee$ are precisely the $r\times r$ minors.  But the above equation tells us that as $w_1 \wedge \dots  \wedge w_r$ has integer coefficients, the coefficients of $v_1 \wedge \dots \wedge v_r$ must be divisible by $i$.  Hence $i | g$.

Therefore $i=g$ as claimed.
\end{proof}

\begin{remark}
We note that since $|F_w|=|F_w^\vee|$ by Theorem \ref{Component group duality} we may use the greatest common divisors of minors of the matrix of $I-w$ with respect to a basis for either the lattice $\Gamma$ or the lattice $\Gamma^\vee$ to compute the cardinality of the groups $F_w,F_w^\vee$.
\end{remark}

As a consequence of Theorem \ref{Component group duality}  we have the following:

\begin{theorem2}
\thmii
\end{theorem2}

\begin{proof}
The fixed sets are isomorphic to $\MaxTorus_1^w\times F_w, (\MaxTorus^\vee_1)^w\times F^\vee_w$ and we know that the component groups are isomorphic by \ref{Component group duality}  so we only need to show that the identity components are isomorphic. This follows from the fact that these are precisely the image of the fixed sets in $\t\cong \t^*$ under the respective quotient maps.
\end{proof}

We now turn to the actions of the centraliser $Z_W(w)$ on the groups $F_w$ and $F_w^\vee$, induced by the actions of $Z_W(w)$ on the $w$-fixed sets $\MaxTorus^w$ and $(T^\vee)^w$.  Although the groups $F_w$ and $F^\vee_w$ are isomorphic, there is in general no $Z_W(w)$-\emph{equivariant} isomorphism between the component groups.  Instead we will show that the actions are dual. Nonetheless even though these dual actions can be very different we will see (cf. Section \ref{section: adjoint fixed sets}) that the numbers of orbits of the centraliser on $F_w$ and $F^\vee_w$ are the same.  This is an instance of our conjecture and can be seen clearly in the dual actions of $G_{25}$ on $\mathbb{F}_3^3$ in case (\ref{Fixed set A_2^3}) in Section \ref{fixed sets}.

\begin{theorem8}
\thmviii
\end{theorem8}

\begin{proof}
For $s\in F_w,t\in F_w^\vee\cong \widehat{F_w}$ denote the corresponding pairing by $\langle s,t\rangle^w$. We must show:
\[
\langle gs,gt\rangle^w = \langle s,t\rangle^w \qquad \forall s\in F_w,t\in F_w^\vee, g\in Z_W(w).
\]

Recall that $\widehat{F_w}\cong \x_w(\Gamma^\vee)/(I-w)\Gamma^\vee$ and $\widehat{F_w^\vee}\cong \x_w(\Gamma)/(I-w)\Gamma$.  By construction these isomorphisms are equivariant for the action of $Z_W(w)$.

The pairing of $\widehat{F_w}$ and $\widehat{F_w^\vee}$ is defined by
\[
\chi(x+(I-w)\Gamma^\vee, y+(I-w)\Gamma):=e^{\frac{2\pi i}{\mu} \langle x,y\rangle_w}.
\]
and for $g\in Z_W(w)$ we have $\chi(gx+(I-w)\Gamma^\vee, gy+(I-w)\Gamma):=e^{2\pi i \langle gx,gy\rangle_w}$.  Now note that
\[
\langle gx,gy\rangle_w = \langle gx,\mbar(w)gy\rangle =\langle gx,g\mbar(w)y\rangle
\]
since $g$ is in the centraliser of $w$, so as $W$ acts isometrically on $\t$ we have $\langle gx,gy\rangle_w=\langle x,y\rangle_w$.

Now for $s\in F_w,t\in F_w^\vee$ the pairing of $gs$ with $gt$ is defined to be the pairing of $gs$ with the image of $gt$ under the composition
\[
F_w^\vee \cong \widehat{\widehat{F_w^\vee}} \cong \widehat{F_w}.
\]
The first isomorphism is tautologically equivariant while the second is equivariant by the above calculation.  Letting $\psi$ denote the image of $t$ under this composition we have
\[
\langle gs,gt\rangle^w = \langle g\psi,gs\rangle = \langle \psi,s\rangle=\langle s,t\rangle^w.
\]
\end{proof}

Bearing in mind the example of $E_6$ we now suppose that $\Gamma^\vee$ refines $\Gamma$ with quotient $\cZ=\Gamma^\vee/\Gamma$ cyclic of prime order.  Moreover we assume that the induced action of $W$ on the quotient $\cZ$ is trivial.  It follows that $\cZ$ lies in each of the fixed groups $\MaxTorus^w$.

The $w$-fixed set in the dual torus $\MaxTorus^\vee=\t/\Gamma^\vee = \MaxTorus/\cZ$ contains the image of $\MaxTorus^w$ under the quotient map $\MaxTorus\to \MaxTorus^\vee$. In particular $(\MaxTorus^\vee)^w_1$ is the image of $\MaxTorus^w_1$ under the quotient map, since the identity component of the fixed set is precisely the image of the fixed set $\t^w$ under the respective exponential map.  Hence the quotient map induces a map from $F_w=\MaxTorus^w/\MaxTorus^w_1$ to $F_w^\vee=(\MaxTorus^\vee)^w/(\MaxTorus^\vee)^w_1$.

We have two cases:  either $\cZ$ lies in $\MaxTorus^w_1$ or $\cZ\cap \MaxTorus^w_1$ is trivial.  In the former case the map from $F_w$ to $F_w^\vee$ is injective.  It is thus an isomorphism since the groups have the same cardinality by Theorem \ref{Component group duality}.  Hence in this case the fixed set $(\MaxTorus^\vee)^w$ is precisely the image of the fixed set $\MaxTorus^w$ under the quotient by $\cZ$, that is:
\[
(\MaxTorus^\vee)^w=(\MaxTorus^\vee)^w_1\times F_w^\vee\cong \MaxTorus^w_1/\cZ\times F_w.
\]

In the latter case the quotient map $\MaxTorus^w\to (\MaxTorus^\vee)^w$ gives an isomorphism of the identity components and takes $F_w$ to a subgroup of $F_w^\vee$, which, again by Theorem \ref{Component group duality}, must have index $|\cZ|$ in $F_w^\vee$.  Hence in this case the image of $\MaxTorus^w$ in $(\MaxTorus^\vee)^w$ is
\[
\MaxTorus^w_1\times F_w/\cZ
\]
which has index $|\cZ|$ in the fixed set $(\MaxTorus^\vee)^w$.

\bigskip

\begin{remark}\label{ramified component remark}
In the case that $\cZ<F_w$ the $Z_W(w)$-orbits in $F_w$ include $|\cZ|$ singletons.  In this case, dually the orbits in $F_w^\vee$ are partitioned into $|\cZ|$ sets.  To see this we note that the inclusion of $\cZ$ in $F_w$ induces a quotient $\widehat{F_w}\to \widehat{\cZ}$.  Since $F_w^\vee \cong \widehat{F_w}$ we have a quotient map $F_w^\vee\to \widehat{\cZ}$, given by the pairing of $F_w^\vee$ with $\cZ$.  By  Theorem \ref{equivariant pairing} the pairing is equivariant but $\cZ$ is fixed by $Z_W(w)$, hence the map $F_w^\vee\to \widehat{\cZ}$ is invariant under the action of $Z_W(w)$. Each $Z_W(w)$-orbit in $F_w^\vee$ thus lies in a coset of the kernel of the map $F_w^\vee\to \widehat{\cZ}$.
\end{remark}

\section{Elements, conjugacy classes and centralisers}\label{elements}

In this section we will provide a list of carefully selected representatives for the conjugacy classes in the Weyl group of type $E_6$ together with key properties of these elements.  We will also introduce a number of special elements of the Weyl group that will play a key role in understanding the centraliser subgroups and their actions.

The elements $s_1, \ldots, s_6$ denote the standard simple reflections generating the Weyl group $W$, with corresponding roots denoted $r_1,\dots, r_6$.  We define $s_0$  to be the reflection corresponding to the root
\[
r_0:=-r_1-2r_2-3r_3-2r_4-r_5-2r_6
\]
which is the negation of the root of highest weight.
This is at $120^{\circ}$ to the simple root $r_6$, so the group generated by $s_0$ and $s_6$ is isomorphic to the symmetric group $S_3$.  On the other hand, $r_0$ is orthogonal to the roots $r_1, \ldots, r_5$, so $s_0$ commutes with the reflections $s_1, \ldots s_5$. 

These relationships can be summarised in the following extension of the Dynkin diagram for $E_6$. We remark that the reflection $s_0$ is the linear part of the additional simple reflection in the affine Weyl group $\tilde{E_6}$, and the extended diagram is isomorphic to the Dynkin diagram of this group.

\begin{figure}[ht] 
   \centering
   \includegraphics[width=2in]{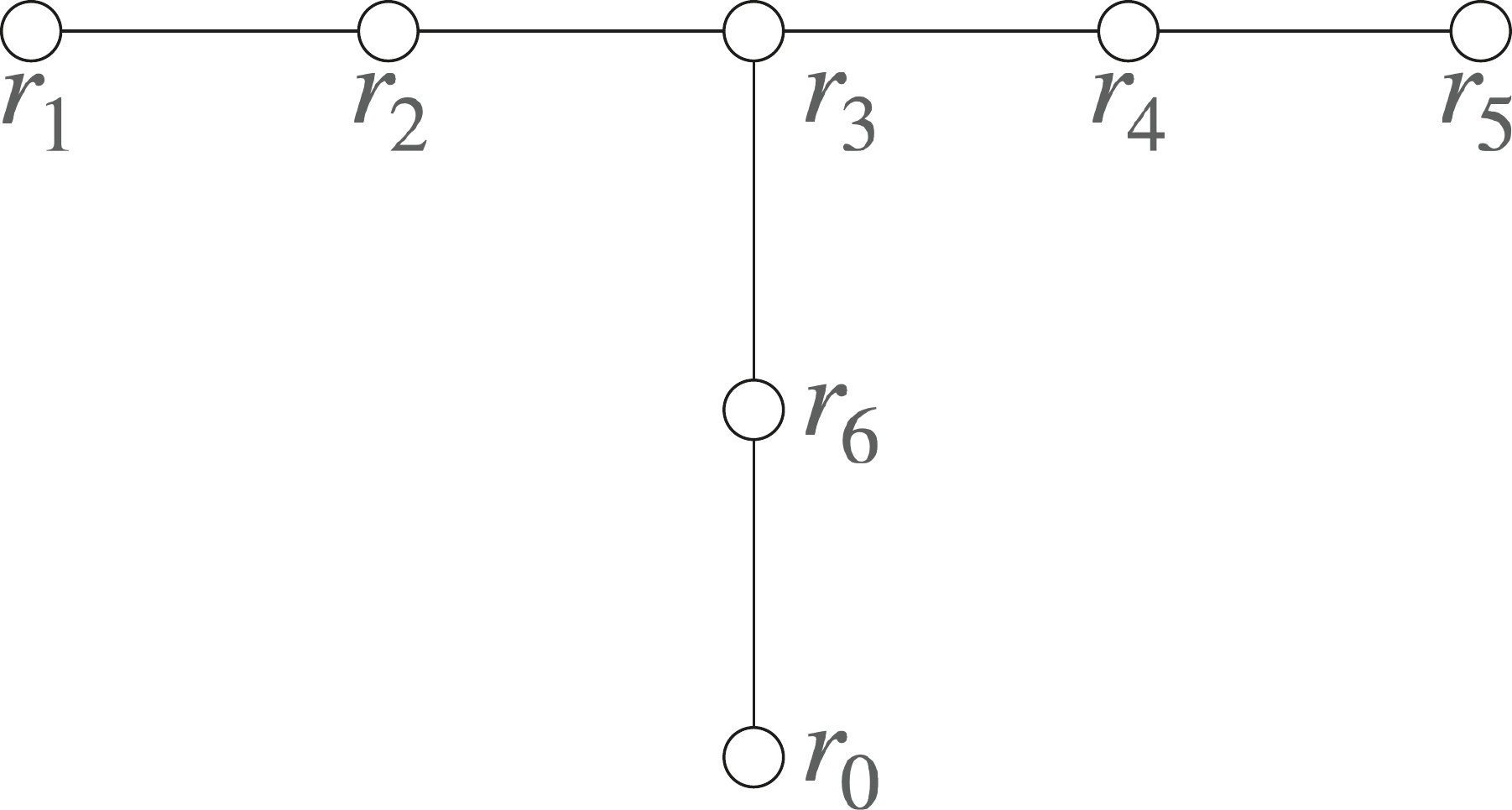} 
   \caption{Coxeter-Dynkin Diagram of the roots $r_0,\dots,r_6$.}
   \label{fig:ExtendedDynkinDiagramE6Tilde}
\end{figure}

This diagram demonstrates additional symmetries of the set of roots for $E_6$, which is the reason that including $r_0$ is so valuable. In choosing representatives for the conjugacy classes we also make use of the reflection $T=s_3^{s_2s_4s_6}$ corresponding to the root $r_T:=-(r_2+r_3+r_4+r_6)=1/2(r_0+r_3+r_1+r_5)$. The name is chosen to reflect the geometry of the roots $r_2,r_3,r_4,r_6$ in the Dynkin diagram, and the symmetry of this set in the extended diagram again motivates the inclusion of this element.

The roots $r_0, r_1, r_5$ and $-r_T$ form a  Dynkin diagram of type $D_4$ with $-r_T$ as the central vertex, hence the group $\group{s_0, s_1, s_5, T}$ is  a Weyl group of type $D_4$. We remark in passing that his group contains $s_3$ as the reflection with root $-2r_T+r_0+r_1+r_5$.

The centralisers will be built from an elementary part along with a small number of additional elements $u_1,u_2,u_3,T,Ts_3$.

\subsection{The special elements}\label{section:special}

We now define elements $u_1, u_2$ in $W$ which generate a copy of $S_3$ acting (by conjugation) as a dihedral group on the reflections $s_0, \ldots s_6$ as illustrated on the following figure. 

\begin{figure}[ht] 
   \centering
   \includegraphics[width=70mm]{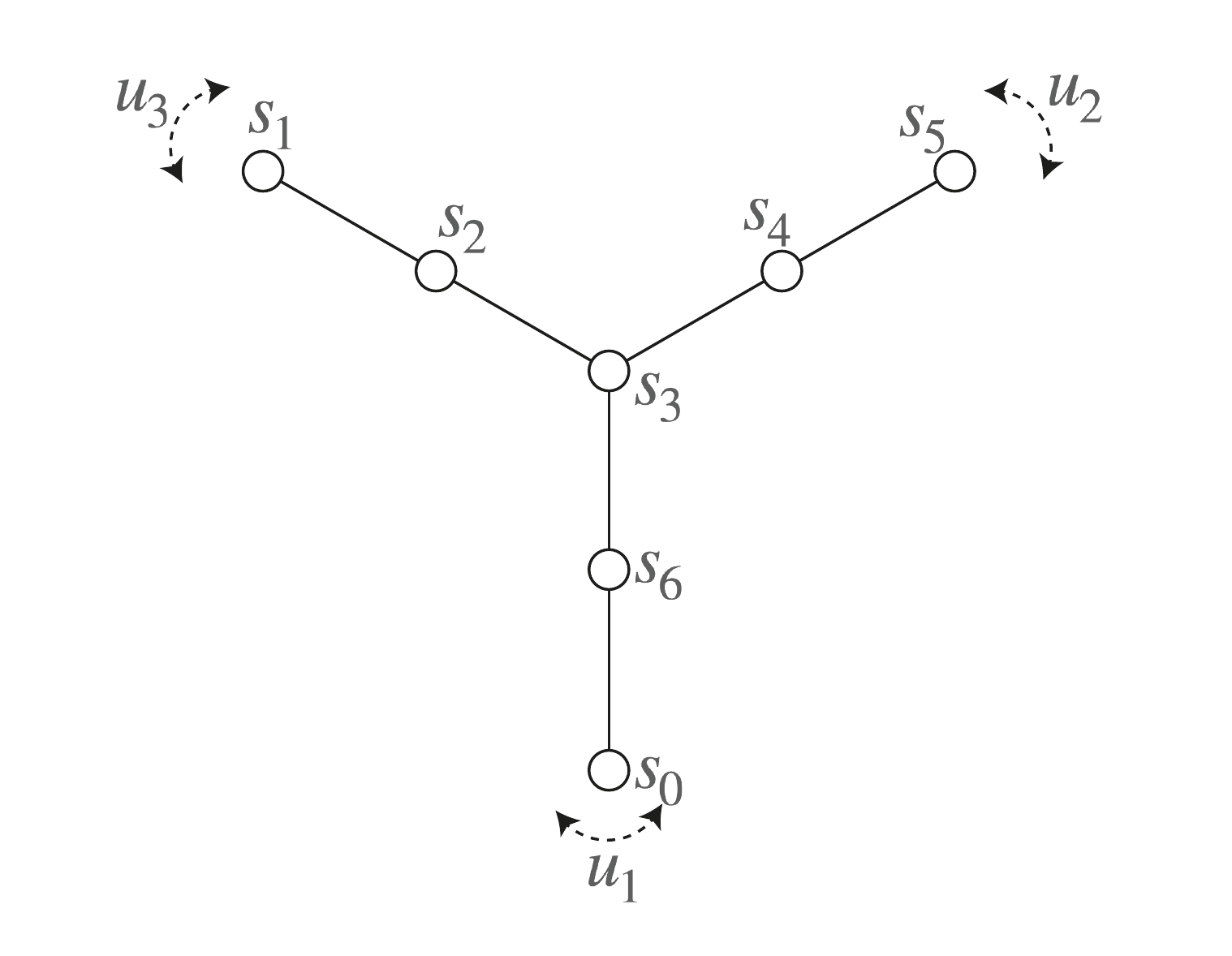} 
   \caption{The action of the special elements $u_i$.}
   \label{fig:specelts}
\end{figure}

The element $u_1$ is defined as the product of four commuting reflections with roots 

\[
\{r_0,\,r_3,\, r_2+r_3+r_4,\, r_1+r_2+r_3+r_4+r_5\}.
\]
It is easy to see that this element negates the roots $r_0, r_3$, so $s_0^{u_1}=s_0$ and $s_3^{u_1}=s_3$.  Acting with $u_1$ on the root $r_6$ gives the sum
\[
r_6+r_0+r_3+(r_2+r_3+r_4)+(r_1+r_2+r_3+r_4+r_5)=-r_6
\]
hence $s_6^{u_1}=s_6$. Its action on $r_2$ takes it to 
\[
r_2+r_3-(r_2+r_3+r_4)= -r_4
\]
so $u_1$ conjugates $s_2$ to $s_4$ and vice versa. Finally, acting on the root $r_1$ takes it to
\[
r_1+(r_2+r_3+r_4)-(r_1+r_2+r_3+r_4+r_5)=-r_5,
\]
so conjugation by $u_1$ switches $s_1$ and $s_5$ as illustrated in the diagram. We note in passing that $-u_1$ is the so-called non-trivial pinned automorphism for $E_6$, as described by Reeder in \cite{reeder2012gradings}.

Turning to the element $u_2$, this is defined as the product of the four commuting reflections with roots 

\[
\{r_5,\,r_3,\, r_6+r_3+r_2,\, r_0+r_6+r_3+r_2+r_1\}.
\]
A similar calculation shows that conjugation by this element preserves $s_5, s_4, s_3$ and switches the pairs $s_6, s_2$ and $s_0, s_1$. Since the root representation is faithful we see that the product $u_1u_2$ has order three, and therefore these elements generate a copy of the dihedral group $\D_3$ and the element $u_3=u_1u_2u_1 = u_2u_1u_2$ is the product of four commuting reflections with roots 

\[
\{r_1,\,r_3,\, r_4+r_3+r_6,\,r_5+r_4+r_3+r_6+r_0\}.
\]

In many cases we will see that the $u_1,u_2$ dihedral group acts on one or more $2$-dimensional subtori of the maximal torus of $E_6$.  There are two natural actions of the dihedral group $\D_3$ on a $2$-torus: considering $\D_3$ as the Weyl group $W(A_2)$ we have the actions on the maximal tori in the Lie groups $SU_3$ and $PSU_3$.  The maximal tori are given respectively by triples $(\alpha,\beta,\gamma)\in \TT^3$ such that $\alpha\beta\gamma=1$, and the quotient of this by the group $C_3=\{(\omega,\omega,\omega): \omega^3=1\}$.  The action of the dihedral group simply permutes the three coordinates.  In both cases the maximal torus can be described as a hexagon with opposite sides identified, and the two actions correspond to the two conjugacy classes of $\D_3$ in $\D_6$. The orbifold quotients are an equilaterial triangle in the $SU_3$ case and a cone for $PSU_3$.

Given that $u_1,u_2$ act by signed permutations on the roots $r_1\dots,r_6,r_0$ we will see that the action of these on the $2$-tori are again given by signed permutations:
\begin{align*}
u_1: (\alpha,\beta,\gamma)\mapsto (\beta^{-1},\alpha^{-1},\gamma^{-1})\\
u_2: (\alpha,\beta,\gamma)\mapsto (\gamma^{-1},\beta^{-1},\alpha^{-1}).
\end{align*}
This has the effect of dualising the actions: the signed permutation action on the $SU_3$ torus is equivariantly isomorphic to the standard permutation action on the $PSU_3$ torus, and similarly the signed permutation action on the $PSU_3$ torus is equivariantly isomorphic to the standard permutation action on the $SU_3$ torus.

The two equivariant isomorphisms are both given by the map
\[
(\alpha:\beta:\gamma)\mapsto (\beta\gamma^{-1},\gamma\alpha^{-1},\alpha\beta^{-1}).
\]

\subsection{The elementary part of the centralisers} Our approach is slightly different to that of Carter, in that we begin with an elementary part of the centraliser that can easily be read off from the extended Coxeter-Dynkin diagram, and then seek additional elements to complete the centraliser.  Nonetheless our elementary part lies within Carter's direct product $W_1\times W_2$ for the conjugacy class representative.

Consider an element $w=w_1w_2\dots w_l \in W$ where $w_1,\dots w_l$ are root reflections and $l=\lbar(w)$ is the word length of an element $w\in W$ in terms of the generating set of all root reflections in the Weyl group.  Associated to such a word there is a root diagram (cf. Carter \cite{carter1972conjugacy}) where roots are connected by an edge when they are not orthogonal (here we only have single edges as we are in the $E_6$ case). We write our representatives in such a way that each connected component of the diagram corresponds to a subword. Such subwords $g_1,\dots,g_k$ of course commute with the element $w$ (and with one another) giving a direct product $\group{g_1}\times \dots \times \group{g_k}$ in the centraliser of $w$.  

Any roots which are orthogonal to all roots in the diagram will again give root reflections in the centraliser.  The group these generate is precisely Carter's group $W_2$. The elementary part of the centraliser is $\group{g_1}\times \dots \times \group{g_k}\times W_2$.

Carter determined the order of the centraliser subgroups, and for $11$ of the conjugacy classes this shows that the elementary part is the entire centraliser and the structure is obvious. In the remaining $14$ cases we are left to discover additional elements of the centraliser.  In all cases we will show that these additional generators can be found in the subgroup $\group{u_1,u_2,T,Ts_3}\cong \D_3\times \D_3$, as a consequence of our careful choice of representatives. 
In determining the centralisers and their actions we will also exploit the fact that the centraliser of an element $g$ is contained in the centraliser of $g^n$, and have chosen conjugacy class representatives accordingly. This relationship between the centralisers is summarised in Figure \ref{fig:centraliserinclusions}. 
Our choices of conjugacy class representatives are tabulated below. We list the eigenvalues for each of these representatives for the standard six dimensional representation: these are distinct, confirming that our representatives give distinct conjugacy classes and that the conjugacy classes are characterised by their eigenvalues.  We also describe the elementary part of the centraliser, and the index of this group in the full centraliser where it is not $1$.

\newgeometry{margin=0.3in}
\begin{landscape}
\begin{table}[ht]
\caption{The conjugacy class representatives and the elementary part of their centralisers}\label{Elementary centraliser table}
\begin{center}
\rowcolors{1}{}{lightgray}
\begin{tabular}{ccc|r|llc}
\hline
\rowcolor{white}\multicolumn{3}{c|}{\hbox{Conjugacy Class}}&{\hbox{centraliser}}&\multicolumn{3}{c}{\hbox{Elementary part of the centraliser}}\\
\rowcolor{white}\hbox{Carter Type}&\hbox{Representative}&\hbox{Eigenvalues}&\hbox{Order}&\hbox{Generators}&Structure&\hbox{Index in full centraliser}\\
\hline
$\emptyset$&$e$&$1,1,1,1,1,1$&$51840$&$\{e\}\times W$&$W$&\\
$A_1$&$s_0$&$1,1,1,1,1,-1$&$1440$&$\langle s_0\rangle\times \langle s_1, \ldots, s_5\rangle$&$ C_2\times S_6$&$ $\\
$A_1^2$&$ s_0s_1$&$1,1,1,1,-1,-1$&$192$&$\langle s_0,s_1\rangle\times \langle s_3,\ldots , s_5\rangle$&$ C_2\times C_2\times S_4$&$2$\\
$A_2$&$s_0s_6$&$1,1,1,1,\e23,\e43$&$216$&$\langle s_0s_6\rangle\times \langle s_1,s_2, s_4,s_5\rangle$&$ C_3\times S_3\times S_3$&$2$\\
$A_1^3$&$s_0 s_1s_5$&$1,1,1,-1,-1,-1 $&$96$&$\langle s_0, s_1, s_5\rangle\times \langle s_3\rangle$&$ C_2\times C_2\times C_2\times C_2$&$6$\\
$A_2\times A_1$&$s_0s_6s_1$&$1,1,1,\e23,\e43,-1$&$36$&$\langle s_0s_6, s_1\rangle\times \langle s_4,s_5\rangle$&$ C_3\times C_2\times S_3$&$ $\\
$A_3$&$s_0s_6s_3$&$1,1,1,i,-1,-i$&$32$&$\langle s_0s_6s_3\rangle\times \langle s_1,s_5\rangle$&$ C_4\times C_2\times C_2$&$2$\\
$A_1^4$&$s_0s_1s_5s_3$&$1,1,-1,-1,-1,-1$&$1152$&$\langle s_0,s_1,s_5,s_3\rangle$&$ C_2\times C_2\times C_2\times C_2$&$72$\\
$A_2\times A_1^2$&$s_0s_6s_1s_5$&$1,1,\e23,\e43,-1,-1$&$24$&$\langle s_0s_6,s_1,s_5\rangle$&$ C_3\times C_2\times C_2$&$2$\\
$A_2^2$&$s_0s_6s_1s_2$&$1,1,\e23,\e43,\e23,\e43$&$108$&$\langle s_0s_6,s_1s_2\rangle\times \langle s_4,s_5\rangle$&$ C_3\times C_3\times S_3$&$2$\\
$A_3\times A_1$&$s_0s_6s_3s_1$&$1,1,i,-1,-i,-1$&$16$&$\langle s_0s_6s_3, s_1\rangle \times \langle s_5\rangle$&$ C_4\times C_2\times C_2$&$ $\\
$A_4$&$s_0s_6s_3s_4$&$1,1,\e25,\e45,\e65,\e85$&$10$&$\langle s_0s_6s_3s_4\rangle\times \langle s_1\rangle$&$ C_5\times C_2$&$ $\\
$D_4$&$s_0s_1s_5T$&$1,1,-1,-1,\e13,\e53$&$36$&$\langle s_0s_1s_5T\rangle$&$ C_6$&$6$\\
$D_4[a_1]$&$s_1Ts_5s_0^T$&$1,1,i,-i,i,-i$&$96$&$\langle s_1Ts_5s_0^T\rangle$&$ C_4$&$24$\\
$A_2^2\times A_1$&$s_0s_6s_5s_1s_2$&$1,\e23,\e43,-1,\e23,\e43$&$36$&$\langle s_0s_6,s_5,s_1s_2\rangle$&$ C_3\times C_2\times C_3$&$2$\\
$A_3\times A_1^2$&$ s_0s_6s_3s_1s_5$&$1,i,-1,-i,-1,-1$&$96$&$\langle s_0s_6s_3,s_1,s_5\rangle$&$ C_4\times C_2\times C_2$&$6$\\
$A_4\times A_1$&$s_0s_6s_3s_4s_1$&$1,\e25,\e45,\e65,\e85,-1$&$10$&$\langle s_0s_6s_3s_4,s_1\rangle$&$ C_5\times C_2$&$ $\\
$A_5$&$s_0s_6s_3s_4s_5$&$1,\e13,\e23,-1,\e43,\e53$&$12$&$\langle s_0s_6s_3s_4s_5\rangle\times \langle s_1\rangle$&$ C_6\times C_2$&$ $\\
$D_5$&$s_0s_6s_3s_4s_3^{s_2s_4}$&$1,-1,\e14,\e34,\e54,\e74$&$8$&$\langle s_0s_6s_3s_4s_3^{s_2s_4}\rangle$&$ C_8$&$ $\\
$D_5[a_1]$&$s_0s_6s_3s_4T$&$1,-1,i,-i,\e13,\e53$&$12$&$\langle s_0s_6s_3s_4T\rangle$&$ C_{12}$&$ $\\
$A_2^3$&$s_0s_6s_1s_2s_5s_4$&$\e23,\e43,\e23,\e43,\e23,\e43$&$648$&$\langle s_0s_6,s_1s_2,s_4s_5\rangle$&$ C_3\times C_3\times C_3$&$24$\\
$A_5\times A_1$&$s_0s_6s_3s_4s_5s_1$&$\e13,\e23,-1,\e43,\e53,-1$&$36$&$\langle s_0s_6s_3s_4s_5,s_1\rangle$&$ C_6\times C_2$&$3$\\
$E_6$&$s_1s_2s_3s_4s_5s_6$&$\e23,\e43,\e16,\e56,\e76,\e{11}6$&$12$&$\langle s_1s_2s_3s_4s_5s_6\rangle$&$ C_{12}$&$ $\\
$E_6[a_1]$&$s_1s_2s_3s_4s_5s_6^{s_3}$&$\e29,\e49,\e89,\e{10}9,\e{14}9,\e{16}9$&$9$&$\langle s_1s_2s_3s_4s_5s_6^{s_3}\rangle$&$ C_9$&$ $\\
$E_6[a_2]$&$s_6s_2s_0^Ts_1^Ts_4s_3$&$\e13,\e13,\e23,\e43,\e53,\e53$&$72$&$\langle s_6s_2s_0^Ts_1^Ts_4s_3\rangle$&$ C_6$&$12$\\
\hline\end{tabular}
\end{center}
\end{table}
\end{landscape}
\restoregeometry

\subsection{The full centralisers}

We begin by stating an elementary lemma which will help to identify elements of the centraliser.

\begin{lemma}\label{eigenvectordecomp} Let $A,B$ be diagonalisable matrices and let $V_1\oplus\dots\oplus V_n$ be the eigenspace
decomposition for $B$. If for all but one $i$, there is an eigenspace
of $A$ containing $V_i$ then $A,B$ commute.
\end{lemma}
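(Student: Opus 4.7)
The plan is to show that $A$ preserves every $B$-eigenspace $V_i$. Once that is in hand, the fact that $B$ acts as the scalar $\mu_i$ on $V_i$ gives, for any $v\in V_i$, that $ABv=\mu_i Av=B(Av)=BAv$; so $[A,B]$ vanishes on each $V_i$ and hence on the whole of $V=V_1\oplus\cdots\oplus V_n$.

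For each of the $n-1$ non-exceptional indices, this is immediate. The hypothesis produces an $A$-eigenspace $E\supseteq V_i$, and since $A$ acts as a single scalar on $E$ it also does so on $V_i$; in particular $A(V_i)\subseteq V_i$. Setting $W=\bigoplus_{i\ne k}V_i$ for the exceptional index $k$, this step establishes $A(W)\subseteq W$ and $[A,B]$ vanishes on $W$.

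It then remains to show $A(V_k)\subseteq V_k$. The natural approach is to exploit the ambient Euclidean setting of the paper: the operators of interest arise from the orthogonal action of the Weyl group on $\t$, so $A$ is orthogonal and the $V_i$ are pairwise orthogonal as eigenspaces of the orthogonal operator $B$. Therefore $V_k=W^\perp$, and since an orthogonal operator preserves the orthogonal complement of any invariant subspace, $A(W)\subseteq W$ forces $A(V_k)\subseteq V_k$. Combined with the previous step this finishes the proof.

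I expect this last step to be the main delicate point. For an abstract pair of diagonalisable matrices, the mere containment hypothesis $V_i\subseteq E_{j(i)}$ for $i\ne k$ does not by itself force $A$ to preserve $V_k$, so it is essential that the operators in the intended application are normal (in fact orthogonal) with respect to the inner product which already makes $B$ diagonalisable with orthogonal eigenspaces. In the Weyl-group setting of the paper this normality is automatic, but it is what allows the reduction $V_k = W^\perp$ that closes the argument.
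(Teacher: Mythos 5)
The paper states this lemma without proof (it is introduced as ``an elementary lemma''), so there is nothing to compare against line by line; judged on its own, your argument is correct in the setting where the lemma is actually applied, and your closing caveat identifies a genuine defect in the literal statement. Indeed, for arbitrary diagonalisable matrices the lemma is false: take $B=\left(\begin{smallmatrix}1&0\\0&2\end{smallmatrix}\right)$ and $A=\left(\begin{smallmatrix}1&1\\0&2\end{smallmatrix}\right)$; both are diagonalisable, the $B$-eigenspace spanned by $e_1$ is contained in the $1$-eigenspace of $A$, so the hypothesis holds with the line spanned by $e_2$ as the single exceptional eigenspace, yet $AB\neq BA$. Your route closes this gap exactly where it should: $A$ acts as a scalar on each non-exceptional $V_i$, hence preserves $W=\bigoplus_{i\neq k}V_i$; since in every application $A$ and $B$ are Weyl group elements acting orthogonally on $\t$, the eigenspaces of the normal operator $B$ are mutually orthogonal, so $V_k=W^\perp$, and an orthogonal (hence invertible, norm-preserving) $A$ with $A(W)=W$ must also preserve $W^\perp$; the final step that $[A,B]$ vanishes on each $B$-eigenspace is standard. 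The only point worth tightening is that a real orthogonal $B$ is in general only diagonalisable after complexification, so the eigenspace decomposition and the orthogonality of the $V_i$ should be read in $\t\otimes\C$ with the Hermitian extension of the inner product (where $A$ becomes unitary); the argument is unchanged. In short: the proof is right, and it would improve the paper to record, as you do, that orthogonality/normality is a necessary hypothesis rather than a convenience.
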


\begin{itemize}

\item In the cases $\emptyset,A_1,A_2\times A_1,A_3\times A_1,A_4,A_4\times A_1,A_5,D_5,D_5[a_1],E_6,E_6[a_1]$ the entire centraliser is the elementary part described in the previous section.

\item In cases $A_2,A_2\times A_1^2,A_3$, the representatives are $s_0s_6$, $s_0s_6s_1s_5$ and $s_0s_6s_3$ respectively, which are all centralised by the element $u_1$. In each case the elementary part of the centraliser has index $2$ and it is easy to see that this does not contain $u_1$. Moreover the action of the $u_1$ on the elementary part gives the structure of a wreath product as follows:
\[
\begin{array}{lllll}
Z_W(s_0s_6) &=& \langle s_0s_6\rangle \times\left(\langle s_1,s_2\rangle \wr \langle u_1\rangle \right) &\cong& C_3\times (S_3\wr C_2)\\
Z_W(s_0s_6s_1s_5) &=& \group{s_0s_6}\times (\langle s_1\rangle \wr\langle u_1)\rangle &\cong& C_3\times (C_2\wr C_2)\\
Z_W(s_0s_6s_3) &=&\group{s_0s_6s_3}\times (\langle s_1\rangle\wr \langle u_1\rangle) &\cong& C_4\times (C_2\wr C_2)
\end{array}
\]
The factors here are all complex reflection groups with the exception of $S_3 \wr C_2$.  This group is isomorphic to the affine orthogonal group $O^-_2(\F_3)\ltimes \F_3^2$ which can be embedded as a reflection group in $GL_3(\F_3)$, dual to the embedding as affine orthogonal transformations.  Note that this group is also isomorphic to $O^+_4(\F_2)$ which is unique amongst orthogonal groups as it is not generated by reflections of $\F_2^4$.

\item Similarly in cases $A_1^2,A_2^2,A_2^2\times A_1$, the representatives $s_0s_1,s_0s_6s_1s_2,s_0s_6s_5s_1s_2$ are centralised by $u_2$ and the structures are given by:
\[
\begin{array}{lllll}
Z_W(s_0s_1) &=& \langle s_0\rangle\wr \langle u_2\rangle \times \group{s_3, s_4, s_5}&\cong& (C_2\wr C_2)\times S_4
\\
Z_W(s_0s_6s_1s_2) &=& (\langle s_0s_6\rangle\wr\group{u_2}) \times\group{s_4,s_5} &\cong& (C_3\wr C_2) \times S_3
\\
Z_W(s_0s_6s_5s_1s_2) &=& (\group{s_0s_6}\wr\group{u_2})\times \group{s_5} &\cong& (C_3\wr C_2)\times C_2
\\
\end{array}
\]

\item In cases $A_1^3$ with representative $s_0s_1s_5$ and $D_4$ with representative $s_0s_1s_5T$, the elementary part of the centraliser has index $6$. The elements $s_0,s_1,s_5$ are permuted and $T$ is fixed by the group $\group{u_1,u_2}$, which therefore lies in both centralisers.

In the first case, since $\group{u_1,u_2}$ also commutes with $s_3$, the elementary part $\langle s_0\rangle \times\langle s_1\rangle\times \langle s_5\rangle\times \langle s_3\rangle$ is normal, and moreover faithfulness of the $\group{u_1,u_2}$ action implies these two groups have trivial intersection. Therefore
\[
Z_W(s_0s_1s_5)=(\langle s_0\rangle{\textstyle\wre{3}} \langle u_1, u_2\rangle)\times \langle s_3\rangle \cong
(C_2{\textstyle\wre{3}} \D_3)\times C_2
\]
where the notation $\wre{3}\D_3$ indicates the permutation wreath product for the action of $\D_3$ on the three factors.

For the element $s_0s_1s_5T$ the elementary part of the centraliser is simply the cyclic group $\group{s_0s_1s_5T}\cong C_6$, so its intersection with $\group{u_1,u_2}$ must be central and therefore trivial. Hence we obtain the centraliser as
\[
Z_W(s_0s_1s_5T)=\group{s_0s_1s_5T}\times\group{u_1, u_2}\cong C_6\times S_3.
\]

\item In the case $A_1^4$  with representative $s_0s_1s_5s_3$, the elementary part of the centraliser is the same as it was in the $A_1^3$ case above.  The  $-1$ eigenspace is the space spanned by $r_0, r_1, r_5, r_3$, and in particular it contains $r_T=1/2( r_0+ r_1+ r_5,+r_3)$. Hence the reflection $T$ lies in the centraliser. As noted in Section \ref{elements}, the group $\group{s_0, s_1, s_5, T}$ is  a Weyl group of type $D_4$, which has index $6$ in the centraliser. The group $\group{u_1,u_2}$ also centralises $s_0s_1s_5s_3$ and normalises $\group{s_0, s_1, s_5, T}$.

It remains to consider the intersection which, since $\group{u_1,u_2}$ centralises $T$, must lie in the centraliser $Z_{W(D_4)}(T)$ which has order $16$ (see \cite{carter1972conjugacy}). Explicitly this is the Weyl group of type $A_1^4$ with roots $r_0+r_1-r_T, r_0+r_5-r_T, r_1+r_5-r_T, r_T$, and the first three of these are permuted by $\group{u_1,u_2}$, hence the intersection is trivial.

We conclude that 
\[
Z_W(s_0s_1s_5s_3)=\group{s_0, s_1, s_5, T}\rtimes \group{u_1,u_2}\cong W(D_4)\rtimes S_3.
\]

\item Case $A_3\times A_1^2$, representative $s_0s_6s_3s_1s_5$: The elementary part of the centraliser is $\group{s_0s_6s_3}\times \langle s_1\rangle \times \langle s_5\rangle\cong C_4\times C_2\times C_2$. This can also be written as $\group{s_0s_6s_3s_1s_5}\times \langle s_1\rangle \times \langle s_5\rangle$, where the first factor is, tautologically, central in the full centraliser. Now we consider the  $-1$ eigenspace for our representative, which is $3$-dimensional. It  is spanned by the root vectors $r_1, r_5$ (the $-1$ eigenspaces of $s_1, s_5$ respectively) together with the $-1$ eigenspace for $s_0s_6s_3$, which is spanned by  $r_0+r_3$.  While $r_0+r_3$ is not a root vector, the sum $r_0+r_3+r_1+r_5$ is twice the root vector $r_T$, hence $T$ belongs to the centraliser. Considering the root system $\{r_1,r_T,r_5\}$ se see that the group $\group{s_1,T,s_5}\cong S_4$ and hence has trivial intersection with $\group{s_0s_6s_3s_1s_5}$. It follows that the centraliser is
\[
Z_W(s_0s_6s_3s_1s_5)=\group{s_0s_6s_3s_1s_5}\times \langle s_1, T, s_5\rangle\cong C_4\times S_4.
\]

\item Case $A_5\times A_1$, representative $s_0s_6s_3s_4s_5s_1$: The elementary part of the centraliser is the same as for the $A_5$ case, $\group{s_0s_6s_3s_4s_5}\times \langle s_1\rangle \cong C_6\times C_2$, and has index $3$ in the full centraliser. The $-1$ eigenspace of the representative is $2$ dimensional and is spanned by the $-1$ eigenvector $r_0+r_3+r_5$ of $s_0s_6s_3s_4s_5$ and the $-1$ eigenvector $r_1$ of $s_1$. As in the previous case $T$ is in the centraliser. We note that $\group{s_1,T}$ is a copy of $S_3$ which has trivial intersection with $\group{s_0s_6s_3s_4s_5s_1}$ since $\group{s_1,T}$ has trivial centre. Hence we obtain the centraliser subgroup as
\[
Z_W(s_0s_6s_3s_4s_5s_1)=\group{s_0s_6s_3s_4s_5s_1}\times \langle s_1,T\rangle \cong C_6\times S_3.
\]

\end{itemize}
The remaining cases are $E_6[a_2]$, $D_4[a_1]$ and $A_2^3$.  These all correspond to Springer-regular elements, \cite{springer1974regular}, and we can read off their structures from \cite[Table 1]{springer1974regular} along with the Shephard-Todd classification of complex reflection groups \cite[Table VII]{shephard1954finite} as the groups $G_5,G_8,G_{25}$ respectively.

In order to identify the actions on the fixed sets in the maximal torus, we need to identify not just the structure of these groups but their elements which we do as follows.

\begin{itemize}

\item Case $D_4[a_1]$, representative $s_1Ts_5s_0^T$: We choose this representative in the subgroup $\group{s_0, s_1, s_5, T}$ of type $D_4$,  so that  its square  $(s_1Ts_5s_0^T)^2=s_0s_1s_5s_3$ is the representative of the class of type $A_1^4$, and hence its centraliser is a subgroup of index 12 in the $A_1^4$ centraliser $\group{s_0, s_1, s_5, T}\rtimes \group{u_1,u_2}\cong W(D_4)\rtimes S_3$ (see above).  

We consider the standard $4$-dimensional representation of $W(D_4)$ corresponding to the generators $s_1,T,s_5, s_0$, with roots
\[
\rho_1:=(1,-1,0,0), \rho_T:= (0,1,-1,0), \rho_5:=(0,0,1,-1), \rho_0:=(0,0,1,1).
\]

The representative $s_1Ts_5s_0^T$ corresponds to the block diagonal matrix $R_{\pi/2}\oplus R_{\pi/2}$ which has $\pi/2$ rotations in coordinates $1,2$ and $3,4$. We equip $\R^4$ with the structure of a complex vector space, declaring that the operator $s_1Ts_5s_0^T$ is multiplication by $i$. Hence the elements of $W(D_4)$ centralising $s_1Ts_5s_0^T$ are precisely those which are $\mathbb C$-linear in this space. Specifically the intersection of our centraliser with the subgroup $W(D_4)$ is the Pauli group  $P$ generated by the matrices:
\[
\sigma_1=\left(\begin{array}{rr}0 & 1 \\1 & 0\end{array}\right),\qquad \sigma_2= \left(\begin{array}{rr}0& -i \\i & 0\end{array}\right),\qquad \sigma_3= \left(\begin{array}{rr}1 & 0 \\0 & -1\end{array}\right) .
\]

These correspond to the elements $s_1^Ts_5^T, T\left(s_1s_5s_0\right)^{Ts_5}$ and  $s_0s_5$  respectively.
The group $P=Z_W(s_1Ts_5s_0^T)\cap W(D_4)$ has index $6$ in the full centraliser, hence as the centraliser is contained in $W(D_4)\rtimes S_3$ it fits into an extension
\[
1\rightarrow P\rightarrow Z_W(s_1Ts_5s_0^T)\rightarrow S_3\rightarrow 1.
\]
We now identify lifts of the generators of $S_3$. The conjugation action of $u_3$ corresponds to interchanging $\rho_5, \rho_0$ and fixing $\rho_1, \rho_T$.
In particular, the conjugation action of $u_3$ on $R_{\pi/2}\oplus R_{\pi/2}$ inverts the second rotation, as does the element $s_5$, and therefore the element $s_5u_3$ is in the centraliser of $s_1Ts_5s_0^T$. It is represented as a complex $2\times 2$ matrix it is the diagonal matrix with entries $1,i$: we denote this complex matrix by $\alpha$.

Now we lift $u_1$, conjugation by which corresponds to exchanging $\rho_1, \rho_5$ while preserving $\rho_T, \rho_0$.
Conjugating $R_{\pi/2}\oplus R_{\pi/2}$ by this we obtain the matrix
\[
\left(\begin{array}{r|r} 0 & R_{\pi/2} \\\hline R_{\pi/2} & 0\end{array}\right)
\]
which can be conjugated back to $R_{\pi/2}\oplus R_{\pi/2}$ by the transposition $(2 4)$, given by $s_5^T$. Hence $s_5^Tu_1$ lies in the centraliser of $s_1Ts_5s_0^T$.  This is represented by the complex matrix
\[
\beta:=\frac 12 \left(\begin{array}{rr} 1-i & 1+i \\ 1+i & 1-i \end{array}\right).
\]
To summarise, the centraliser of $s_1Ts_5s_0^T$ is generated by the Pauli matrices $\sigma_1,\sigma_2,\sigma_3$ along with $\alpha,\beta$.

Note that the lifts $\alpha$ and $\beta$ each have order $4$, indeed $\alpha^2=\sigma_3$ and $\beta^2=\sigma_1$.  Moreover $\alpha\sigma_1\alpha^{-1}=\sigma_2$, hence the generators $\alpha,\beta$ are sufficient. The centraliser is thus the group
\[
\group{s_5u_3,s_5^Tu_1}
\]
which is faithfully represented as the matrix group $\group{\alpha,\beta}$.  These complex reflections  generate the symmetry group of the complex polygon $4(96)4$ (see section 1.6 of \cite{shephard1952regular}), identifying $Z_W(s_1Ts_5s_0^T)$ with the complex reflection group $G_8$ in the Shephard-Todd classification.

\item Case $A_2^3$, representative $s_0s_6s_1s_2s_5s_4$:
The elementary part of the centraliser is $\langle s_0s_6\rangle\times \langle s_1s_2\rangle\times\langle s_5s_4\rangle\cong C_3\times C_3\times C_3$. This has index $24$ in the full centraliser, and it is evident that the missing elements include $\group{u_1, u_2}$, which permute the direct factors of the elementary part, giving a subgroup of index $4$ in the centraliser isomorphic to $C_3\wre{3} \D_3$.

Applying Lemma \ref{eigenvectordecomp} we see that $Ts_3$ is in the centraliser. We will show that $\group{s_0s_6,Ts_3, s_5s_4}$ is the complex reflection group $G_{25}$ and hence is the full centraliser.

We start with the six dimensional real representation of the Weyl group $W(E_6)$ on the real vector space spanned by the $E_6$ root system.  On this space define the operator
\[
J:= \frac{1}{\sqrt{3}}\left( 2s_0s_6s_1s_2s_5s_4+I\right).
\]
 Note that  
 \[
 J^2= \frac{1}{3}\left(4 (s_0s_6s_1s_2s_5s_4)^2+4 s_0s_6s_1s_2s_5s_4 +I\right) = -I
 \]
 since $(s_0s_6s_1s_2s_5s_4)^2+ s_0s_6s_1s_2s_5s_4 + I=0$. Hence we can define a complex scalar multiplication on the Lie algebra using $J$ as multiplication by $i$. An element of the Weyl group $W(E_6)$ centralises $s_0s_6s_1s_2s_5s_4$ if and only if it commutes with $J$, i.e.\ is $\mathbb C$-linear for this structure.
 
 Moreover the elements $s_0s_6,Ts_3, s_5s_4$ are complex reflections: taking $r_0, r_1, r_5$ as a basis of this complex space these elements are given by:
 \[
 \alpha_1=\left(\begin{array}{rrr}\zeta & 0 & 0 \\0 & 1 & 0 \\0 & 0 & 1\end{array}\right), \qquad       \alpha_2=\frac{1}{\sqrt{3}} \left(\begin{array}{rrr}\eta & -\etabar & -\etabar \\-\etabar & \eta & -\etabar \\-\etabar & -\etabar & \eta\end{array}\right)        , \qquad \alpha_3=\left(\begin{array}{rrr}1 & 0 & 0 \\0 & 1 & 0 \\0 & 0 & \zeta\end{array}\right)
 \]
 where $\eta=e^{\pi i / 6}$. These generating reflections satisfy the  braid relations
 \[
 \alpha_1\alpha_2\alpha_1=\alpha_2\alpha_1\alpha_2, \alpha_2\alpha_3\alpha_2=\alpha_3\alpha_2\alpha_3,
 \]
and also $\alpha_1\alpha_3=\alpha_3\alpha_1$. Since the geometry of the fixed hyperplanes of the complex reflections is determined by the above relations, the reflection group is the Shephard-Todd group $G_{25}=\group{s_0s_6,Ts_3, s_5s_4}$.

\item Case $E_6[a_2]$. The representative 
$s_6s_2s_0^Ts_1^Ts_4s_3$ squares to the representative $s_0s_6s_1s_2s_5s_4$ above,  therefore its centraliser is a subgroup of $G_{25}$. In the above generators $\alpha_1, \alpha_2, \alpha_3$ we have
\[
s_6s_2s_0^Ts_1^Ts_4s_3 = \alpha_1^2\alpha_2^2\alpha_3\alpha_2^2\alpha_1^2 =
\left(\begin{array}{ccc}0 & -\zetabar & 0\\ -\zetabar & 0 & 0 \\0 & 0 & -\zetabar\end{array}\right).
\]
This evidently commutes with $\group{\alpha_2,\alpha_3}$ which is which is the binary tetrahedral group $SL_2(3)$, that is $G_4$ in the Shephard-Todd classification. Its centre is a copy of $C_2$ generated by the element $(\alpha_1^2\alpha_2^2\alpha_3\alpha_2^2\alpha_1^2)^3=(\alpha_3^2\alpha_2)^2$, hence $\group{(\alpha_1^2\alpha_2^2\alpha_3\alpha_2^2\alpha_1^2)^2}$ intersects trivially with $\group{\alpha_2,\alpha_3}$.

The group $C_3\times SL_2(3)$ is the Shephard-Todd group $G_5$, so we have the found the full centraliser
\[
Z_W(s_6s_2s_0^Ts_1^Ts_4s_3)=\group{(\alpha_1^2\alpha_2^2\alpha_3\alpha_2^2\alpha_1^2)^2}\times\group{\alpha_2, \alpha_3}\cong C_3\times SL_{2}(3).
\]

\end{itemize}

\section{Fixed sets and their quotients: the simply connected type}\label{fixed sets}

In this section we will determine the fixed sets of each conjugacy class representative in the standard maximal torus $\MaxTorus$ in the simply connected form of the Lie group of type $E_6$. We will analyse in detail the actions of the centralisers on the fixed sets for each conjugacy class representative.  In each case we will see that the quotient, which we refer to as a \emph{sector}, has the homotopy type of a finite union of points or a finite union of circles.  These results are summarised in Table \ref{fixed sets table}.

\bigskip

\begin{table}[ht]
\caption{Centralisers, Fixed Sets and Quotients}\label{fixed sets table}\label{Fixed sets and centralisers}

\begin{center}
\rowcolors{1}{}{lightgray}
\begin{tabular}{|ll|ll|lr|}
\hline
\rowcolor{white}\multicolumn{2}{|c|}{\hbox{Conjugacy class}}&\multicolumn{2}{|c|}{\hbox{Centraliser}}&\hbox{Fixed set}&\hbox{Quotient}\\
\rowcolor{white}\hbox{Type}&\hbox{Representative}&\hbox{Structure}&\hbox{Generators}&\hbox{}&\hbox{}\\
\hline
$\emptyset$&$e$&$W(E_6)$&$\gp{s_1, \dots, s_6}$&$\TT^6$&$\Delta^6$\\
$A_1$&$s_0$&$C_2\times W(A_5)$&$\gp{s_0}\times\gp{s_1, \dots, s_5}$&$\TT^5$&$\Delta^5$\\
$A_1^2$&$ s_0s_1$&$\D_4\times S_4$&$(\gp{s_0}\wr\gp{u_2})\times \gp{s_3,s_4,s_5}$&$\TT^4$&$\Delta_3\tilde\times S^1$\\
$A_2$&$s_0s_6$&$C_3\times(S_3\wr C_2)$&$\gp{s_0s_6}\times (\gp{s_1, s_2}\wr \gp{u_1})$&$\TT^4$&$\hbox{SP}^2(\Delta_2)$\\
$A_1^3$&$s_0 s_1s_5$&$(C_2\wr_3 \D_3)\times C_2$&$(\gp{s_0}\wr_3 \gp{u_1, u_2})\times \gp{s_3}$&$\TT^3$&$\Delta_2\times \Delta_1$\\
$A_2\times A_1$&$s_0s_6s_1$&$C_3\times C_2\times S_3$&$\gp{s_0s_6,s_1}\times \gp{s_4, s_5}$&$\TT^3$&$\TT^1\times \Delta_2$\\
$A_3$&$s_0s_6s_3$&$C_4\times \D_4$&$\gp{s_0s_6s_3}\times (\gp{s_1}\wr\gp{u_1})$&$\TT^3$&$\Delta_2\tilde\times S^1$\\
$A_1^4$&$s_0s_1s_5s_3$&$G_{28}$&$\gp{s_0, s_1, s_5, T}\rtimes \gp{u_1, u_2}$&$\TT^2\times V_4$&$\Delta_2\sqcup \Delta_2$\\
$A_2\times A_1^2$&$s_0s_6s_1s_5$&$C_3 \times \D_4$&$\gp{s_0s_6}\times (\gp{s_1}\wr\gp{u_1})$&$\TT^2$&$\hbox{SP}^2(\TT^1)$\\
$A_2^2$&$s_0s_6s_1s_2$&$(C_3\wr C_2)\times S_3$&$(\gp{s_0s_6}\wr\gp{u_2})\times\gp{s_4, s_5}$&$\TT^2\times \cZ$&$\Delta_2\times \cZ$\\
$A_3\times A_1$&$s_0s_6s_3s_1$&$C_4\times V_4$&$\gp{s_0s_6s_3}\times\gp{s_1}\times\gp{s_5}$&$\TT^2$&$\Delta_1\times \TT^1$\\
$A_4$&$s_0s_6s_3s_4$&$C_5\times C_2$&$\gp{s_0s_6s_3s_4}\times\gp{s_1}$&$\TT^2$&$\hbox{SP}^2(\TT^1)$\\
$D_4$&$s_0s_1s_5T$&$C_6\times \D_3$&$\gp{s_0s_1s_5T}\times\gp{u_1,u_2}$&$\TT^2$&$\Delta_2$\\
$D_4[a_1]$&$s_1Ts_5s_0^T$&$G_8$&$\gp{s_5u_3, s_5^Tu_1}$&$\TT^2$&$\Delta_2$\\
$A_2^2\times A_1$&$s_0s_6s_5s_1s_2$&$(C_3\wr C_2)\times C_2$&$(\gp{s_0s_6}\wr \gp{u_2})\times \gp{s_5}$&$\TT^1\times \cZ$&$\Delta_1\times \cZ$\\
$A_3\times A_1^2$&$ s_0s_6s_3s_1s_5$&$C_4\times S_4$&$\gp{\rep}\times\gp{s_1, T, s_5}$&$\TT^1\times V_4$&$\TT^1\sqcup \TT^1$\\
$A_4\times A_1$&$s_0s_6s_3s_4s_1$&$C_5\times C_2$&$\gp{s_0s_6s_3s_4s_5s_1}$&$\TT^1$&$\TT^1$\\
$A_5$&$s_0s_6s_3s_4s_5$&$C_6\times S_2$&$\gp{s_0s_6s_3s_4s_5}\times\gp{s_1}$&$\TT^1\times \cZ$&$\Delta_1\times \cZ$\\
$D_5$&$s_0s_6s_3s_4s_3^{s_2s_4}$&$C_8$&$\gp{s_0s_6s_3s_4s_3^{s_2s_4}}$&$\TT^1$&$\TT^1$\\
$D_5[a_1]$&$s_0s_6s_3s_4T$&$C_{12}$&$\gp{s_0s_6s_3s_4T}$&$\TT^1$&$\TT^1$\\
$A_2^3$&$s_0s_6s_1s_2s_5s_4$&$G_{25}$&$\gp{s_0s_6, Ts_3, s_5s_4}$&$C_3\times C_3\times \cZ$&Four points\\
$A_5\times A_1$&$s_0s_6s_3s_4s_5s_1$&$C_6\times S_3$&$\gp{\rep}\times \gp{s_1, T}$&$V_4\times \cZ$&$\cZ\sqcup \cZ$\\
$E_6$&$s_1s_2s_3s_4s_5s_6$&$C_{12}$&$\gp{s_1s_2s_3s_4s_5s_6}$&$\cZ$&$\cZ$\\
$E_6[a_1]$&$s_1s_2s_3s_4s_5s_6^{s_3}$&$C_9$&$\gp{s_1s_2s_3s_4s_5s_6^{s_3}}$&$\cZ$&$\cZ$\\
$E_6[a_2]$&$s_6s_2s_0^Ts_1^Ts_4s_3$&$G_5$&$\gp{(\rep)^2, Ts_3,s_5s_4}$&$\cZ$&$\cZ$\\
\hline\end{tabular}
\end{center}

\end{table}

Notation
\begin{itemize}
    \item $\TT^m$ the $m$-torus
    \item $\Delta_k$ the $k$-simplex (not necessarily equilateral)
    \item $\Delta_k\tilde\times S^1$ a twisted bundle over the circle with fibre a $k$-simplex
    \item $SP^2(X)$ the $2$-fold symmetric product of a space $X$
    \item $C_m$ the cyclic group of order $m$
    \item $\D_3$ the dihedral group of order $6$ generated by the special elements $u_1, u_2$
    \item $C_2\wr_3 \D_3$ is the permutation wreath product $C_2^3\rtimes \D_3$
    \item $\D_4$  a dihedral group of order 8 of the form $\langle s_i\rangle\wr \langle u_j\rangle$
    \item $S_m$ a symmetric group on $m$ letters generated by a subset of the 36 root reflections
    \item $V_4$ a Klein $4$ group.
    \item $G_5, G_8, G_{25}, G_{28}$ exceptional complex reflection groups (cf.\ \cite{shephard1954finite})
    \item $\rep$ is an abbreviation for the chosen representative of the conjugacy class.
     \item $\cZ$ the centre of the simply connected Lie group of type $E_6$.
\end{itemize}

\bigskip

The fixed sets are identified as topological spaces using Theorem \ref{Cardinality of component groups} combined with the following well known lemma. Recall that $\overline{l}(w)$ denotes the length of the element $w$ with respect to the generating set of \emph{all} root reflections.

\begin{lemma}[{\cite[Lemma 2]{carter1972conjugacy}}]
$\overline{l}(w)$ is the number of eigenvalues with multiplicity of $w$ which are not equal to $1$.
\end{lemma}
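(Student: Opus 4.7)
My plan is to establish the two inequalities $\overline{l}(w)\geq r$ and $\overline{l}(w)\leq r$, where $r$ denotes the number of eigenvalues of $w$ (with multiplicity) different from $1$. Since $w$ is orthogonal hence diagonalisable, $r$ equals $\dim \x_w = \dim\mathrm{im}(I-w) = \mathrm{codim}\,\t^w$.

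The lower bound is the easy direction. Given any factorisation $w=s_1s_2\cdots s_k$ into root reflections, the telescoping identity
\[
I-w \;=\; (I-s_1)\;+\;s_1(I-s_2)\;+\;\cdots\;+\;s_1\cdots s_{k-1}(I-s_k)
\]
expresses $I-w$ as a sum of $k$ rank-one operators, so $r=\mathrm{rank}(I-w)\leq k$. Minimising over all factorisations yields $r\leq \overline{l}(w)$. Equivalently, the fixed subspace $\t^w$ contains the intersection of the reflecting hyperplanes of the $s_i$, which has codimension at most $k$.

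For the upper bound $\overline{l}(w)\leq r$ I would proceed by induction on $r$, the base case $r=0$ giving $w=I$ and $\overline{l}(w)=0$. For the inductive step the task is to produce a single root reflection $s_\alpha$ such that $\mathrm{rank}(I-s_\alpha w)=r-1$; then $s_\alpha w=s_1\cdots s_{r-1}$ by induction, and $w=s_\alpha s_1\cdots s_{r-1}$ has $\overline{l}(w)\leq r$. The natural place to look for $\alpha$ is inside $\x_w=\mathrm{im}(I-w)$, since the reflecting hyperplane of $s_\alpha$ should meet $\t^w$ trivially so that the fixed space strictly grows. A direct calculation shows that if $\alpha\in\x_w$, then $\t^{s_\alpha w}=\{x\mid (I-w)x\in\R\alpha \text{ and } \langle x,\alpha\rangle = \tfrac{1}{2}\|\alpha\|^{2}\cdot c\}$ is a codimension-$(r-1)$ subspace, giving the desired rank drop provided $\alpha$ is an actual root of $\Phi$.

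The main obstacle, and the reason this is not a purely Euclidean statement like Cartan--Dieudonn\'e, is therefore to exhibit a root $\alpha\in\Phi\cap\x_w$. The key auxiliary fact I would establish is that the restricted root system $\Phi\cap \x_w$ spans $\x_w$ and that $w|_{\x_w}$ is an elliptic element of the Weyl group it generates. Carter's construction of the \emph{admissible diagram} for $w$ achieves precisely this by decomposing $w=\sigma_1\sigma_2$ as a product of two involutions whose component reflections have roots lying in $\x_w$ and forming a basis of $\x_w$; this simultaneously proves the existence of such $\alpha$ at each step of the induction and furnishes an explicit factorisation of length exactly $r$. Combining the two inequalities gives $\overline{l}(w)=r$, as required.
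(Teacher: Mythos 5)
The paper does not prove this statement at all: it is quoted verbatim from Carter with a citation, so there is no in-paper argument to compare against. Judged on its own terms, your skeleton is the standard (indeed Carter's) one, and the lower bound is complete: the telescoping identity correctly exhibits $I-w$ as a sum of $\overline{l}(w)$ rank-one operators, giving $r=\mathrm{rank}(I-w)\leq\overline{l}(w)$.

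The upper bound has two soft spots. First, the ``direct calculation'' of $\t^{s_\alpha w}$ is not yet a proof: with your undeclared constant $c$ the description only shows $\t^{s_\alpha w}$ is cut out of the $(n-r+1)$-dimensional space $(I-w)^{-1}(\R\alpha)$ by one further linear condition, so a priori the rank of $I-s_\alpha w$ could be $r$ rather than $r-1$. The rank genuinely drops because $\t^{s_\alpha w}\supseteq\t^w$ (as $\alpha\perp\t^w$) together with the identity $\langle(I-w)^{-1}\alpha,\alpha\rangle=\tfrac12\|\alpha\|^2$, valid for any orthogonal $w$ restricted to $\x_w$; this is exactly where orthogonality enters and it should be stated. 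Second, and more seriously, your appeal to Carter's admissible-diagram decomposition $w=\sigma_1\sigma_2$ to produce a root in $\x_w$ is circular relative to Carter's own development: the fact that the roots occurring in that decomposition number exactly $r$ and form a basis of $\x_w$ is deduced \emph{from} the lemma you are proving. A non-circular way to get the key existence statement is Steinberg's fixed-point theorem: the pointwise stabiliser of $\t^w$ in $W$ is generated by the reflections it contains, so $w$ lies in the reflection subgroup generated by $\{s_\alpha:\alpha\in\Phi\cap\x_w\}$; since $w$ has no nonzero fixed vector in $\x_w$, these roots must span $\x_w$, and in particular a suitable $\alpha$ exists whenever $w\neq I$. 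With that substitution your induction closes and the proof is correct.
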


Using this lemma, for each $w\in W$ the torus $\MaxTorus^w_1$ has dimension ${6-\overline{l}(w)}$, where $\overline{l}(w)$ denotes the word length of $w$ with respect to the generating set consisting of all root reflections in $W$. 

Combining this with Theorem \ref{homeomorphic fixed sets} determines the topology of the fixed sets in both the simply connected and adjoint forms:

\begin{theorem}
\label{the fixed sets}
For each $w\in W$ the fixed sets $\MaxTorus^w$ and $(\MaxTorus^\vee)^w$ are both homeomorphic to the disjoint union of $g$ copies of a $6-\overline{l}(w)$-torus where $g$ is the greatest common divisor of the $r\times r$-minors of the matrix of $I-w$ and $r$ is the rank of $I-w$.
\end{theorem}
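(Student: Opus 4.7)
The proof is a direct assembly of results already in the paper, so the main work is simply to identify which results to combine. The plan is to separate the topology of $\MaxTorus^w$ into its two standard pieces, the identity component and the component group, and identify each of them using a result already proved.

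First, I would invoke the decomposition $\MaxTorus^w \cong \MaxTorus^w_1 \times F_w$ from the opening of Section \ref{Component groups}. Since $\MaxTorus^w_1$ is the image under the exponential map of the $1$-eigenspace $\t^w$, it is a subtorus of $\MaxTorus$ of dimension equal to $\dim \t^w$. The quoted lemma of Carter tells us that $\overline{l}(w)$ equals the number of eigenvalues of $w$ (counted with multiplicity) that are not equal to $1$, so the dimension of the $1$-eigenspace is $6 - \overline{l}(w)$. Consequently $\MaxTorus^w_1$ is a torus of dimension $6 - \overline{l}(w)$.

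Second, I would identify the order of $F_w$. By Theorem \ref{Cardinality of component groups}, $|F_w|$ equals $g$, the greatest common divisor of the $r \times r$-minors of $I - w$ expressed with respect to a basis of $\Gamma^\vee$; and the remark following its proof notes that the same $g$ is obtained from the matrix of $I-w$ on $\Gamma$, so either basis gives the right number. Combining these two observations, $\MaxTorus^w$ is homeomorphic to the disjoint union of $g$ copies of a torus of dimension $6-\overline{l}(w)$.

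Finally, for the dual torus, rather than repeating the argument I would simply cite Theorem \ref{homeomorphic fixed sets}, which gives a non-canonical isomorphism of topological groups $\MaxTorus^w \cong (\MaxTorus^\vee)^w$, so the same description transfers immediately. There is no genuine obstacle in this proof; all the substance lives in Theorems \ref{Component group duality}, \ref{Cardinality of component groups} and \ref{homeomorphic fixed sets} and in Carter's lemma, and the only thing to verify is that these pieces fit together to give precisely the statement claimed.
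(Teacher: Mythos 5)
Your proposal is correct and is essentially identical to the paper's own argument: the paper likewise combines the decomposition $\MaxTorus^w\cong\MaxTorus^w_1\times F_w$ with Carter's lemma for the dimension of the identity component, Theorem \ref{Cardinality of component groups} for $|F_w|=g$, and Theorem \ref{homeomorphic fixed sets} to transfer the description to $(\MaxTorus^\vee)^w$. No gaps to report.
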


We now turn to the actions of centralisers on the corresponding fixed sets.

Since the action of the Weyl group $W$ on the maximal torus $\MaxTorus$ is given by conjugation, all of the fixed sets contain the centre $\cZ$ of the Lie group as a pointwise fixed subset. For example in the 3 elliptic cases of Carter-type $E_6,E_6[a_1],E_6[a_2]$, the fixed sets, and hence the quotients are identified with the centre $\cZ$.

In cases $A_4\times A_1, D_5$ and $D_5[a_1]$ the fixed set consists of a single circle containing the (pointwise fixed) centre so any element of $W$ preserving the circle must fix it pointwise.  In particular the centralisers pointwise fix the circle identifying the quotient with the fixed set.

\subsection{Alternative Coordinate Systems}
To understand the actions of centralisers on the fixed sets it will be helpful to consider alternative coordinate systems on the torus.

\begin{enumerate}
\item $\mathfrak{su}_6$ coordinates: In many cases the fixed set will lie inside the $5$-torus $(\alpha,\beta,\gamma,\delta,\epsilon,1)$ which is the maximal torus corresponding to the $A_5$ root system given by $\{r_1,\dots ,r_5\}$.  When considering the $A_n$ case it is often helpful to use coordinates where the roots are $(1,-1,0,\dots,0)$, $(0,1,-1,0,\dots,0)$ etc.\ and for which the maximal torus is correspondingly given by tuples with product $1$ (as is the case for the standard maximal torus in $SU_{n+1}$).  We obtain these coordinates on the above $5$-torus by the change of coordinates:
\[(\alpha,\beta,\gamma,\delta,\epsilon,1)\mapsto (\alpha, \beta\alpha^{-1}, \gamma\beta^{-1}, \delta\gamma^{-1},\epsilon\delta^{-1}, \epsilon^{-1}).
\]
Under this identification the $A_5$ Weyl group generated by $s_1,\ldots,s_5$ acts by permuting the coordinates.

\item $\mathfrak{su}_3^3$ coordinates: Another alternative coordinate system is obtained by considering the roots $r_1,r_2,r_5,r_4,r_0,r_6$ as a basis for the Lie algebra.  The advantage of this approach is that the dihedral group $\group{u_1,u_2}$ acts by signed permutations on the pairs $(r_1,r_2), (r_5,r_4)$ and $(r_0,r_6)$.  Moreover the actions of $s_1,s_5,s_0$ will remain relatively simple in our new coordinates. The omitted root vector $r_3$ can be obtained from
\begin{align*}
3r_3 &= -r_0 -2r_6 -r_1-2r_2 -r_5-2r_4\\
&=-r_0 +r_6 -r_1+r_2 -r_5+r_4-3(r_6+r_2+r_4)
\end{align*}
hence the root lattice is the refinement of the lattice generated by $r_1,r_2,r_5,r_4,r_0,r_6$ by the addition of the vector  $\frac 13(-r_0 +r_6 -r_1+r_2 -r_5+r_4)$.

Motivated by this we take the ordered basis $r_1,-r_2,r_5,-r_4,r_0,-r_6$ for the Lie algebra, for which coordinates the lattice is $\Z^6$ extended by the vector $(\frac 13,\dots,\frac 13)$. The maximal torus is given by $\TT^6/C_3$ where the $C_3$ acts by multiplication by $(\omega,\dots,\omega)$ for $\omega$ a cube root of unity.  We denote these new coordinates $(\alpha':\beta':\epsilon':\delta':\eta':\zeta')$.

The action of $\group{u_1,u_2}$ on $\TT^6/C_3$ is to permute and invert pairs of coordinates as follows.
\begin{align*}
u_1:(\alpha':\beta':\epsilon':\delta':\eta':\zeta')&\mapsto((\epsilon')^{-1}:(\delta')^{-1}:(\alpha')^{-1}:(\beta')^{-1}:(\eta')^{-1}:(\zeta')^{-1})\\
u_2:(\alpha':\beta':\epsilon':\delta':\eta':\zeta')&\mapsto((\eta')^{-1}:(\zeta')^{-1}:(\epsilon')^{-1}:(\delta')^{-1}:(\alpha')^{-1}:(\beta')^{-1})\\
u_3:(\alpha':\beta':\epsilon':\delta':\eta':\zeta')&\mapsto ((\alpha')^{-1}:(\beta')^{-1}:(\eta')^{-1}:(\zeta')^{-1}:(\epsilon')^{-1}:(\delta')^{-1})
\end{align*}

The conversion from these coordinates to the standard $r_1,\dots,r_6$ coordinates is given by the map
\[
(\alpha':\beta':\epsilon':\delta':\eta':\zeta')\mapsto (\eta'(\alpha')^{-1},(\eta')^2\beta',(\eta')^3,(\eta')^2\delta',\eta'(\epsilon')^{-1},(\eta')^2\zeta')
\]
which is well defined on $C_3$-cosets.

Now applying $s_1$ changes the first of these coordinates to 
\[
(\eta'(\alpha')^{-1})^{-1}(\eta')^2\beta = \eta'\alpha'\beta'
\]
while leaving the other coordinates unchanged.  Thus in the new coordinates $s_1$ takes $(\alpha':\beta':\epsilon':\delta':\eta':\zeta')$ to $((\alpha'\beta')^{-1}:\beta':\epsilon':\delta':\eta':\zeta')$.

Similarly $s_5$ changes $\epsilon'$ to $(\epsilon'\delta')^{-1}$ leaving the other coordinates unchanged, and $s_0$ changes $\eta'$ to $(\eta'\zeta')^{-1}$ leaving the other coordinates unchanged.

The action of $T$ in the new coordinates is much simpler than in the original coordinates.  In standard coordinates $T$ is given by the matrix:
\[
\begin{pmatrix}
1&0&0&0&0&0\\
1&0&1&-1&1&-1\\
1&-1&2&-1&1&-1\\
1&-1&1&0&1&-1\\
0&0&0&0&1&0\\
1&-1&1&-1&1&0
\end{pmatrix}
\]
Applying this to $(\eta'(\alpha')^{-1},(\eta')^2\beta',(\eta')^3,(\eta')^3\delta',\eta'(\epsilon')^{-1},(\eta')^2\zeta)$, and choosing $\phi$ to be a cube root of $(\alpha'\beta'\epsilon'\delta'\zeta'\eta')^{-1}$ we obtain
\[
(\eta'(\alpha')^{-1},\eta'(\alpha'\epsilon'\delta'\zeta')^{-1},(\eta')^2(\alpha'\beta'\epsilon'\delta'\zeta')^{-1},\eta'(\alpha'\beta'\epsilon'\zeta')^{-1},\eta'(\epsilon')^{-1},\eta'(\alpha'\beta'\epsilon'\delta')^{-1})
\]
\[
= (\eta'(\alpha')^{-1},(\eta')^2\beta'\phi^3,(\eta')^3\phi^3,(\eta')^2\delta'\phi^3,\eta'(\epsilon')^{-1},(\eta')^2\zeta'\phi^3)
\]
\[
= ((\eta'\phi)(\alpha'\phi)^{-1},(\eta'\phi)^2\beta'\phi,(\eta'\phi)^3,(\eta'\phi)^2\delta'\phi,(\eta'\phi)(\epsilon'\phi)^{-1},(\eta'\phi)^2\zeta'\phi)
\]
whence the action of $T$ in the new coordinates simply multiplies each coordinate by $\phi$.  Since $\phi$ was well-defined up to a cube root of unity this makes the action well-defined on $\TT^6/C_3$.

\end{enumerate}
 
\subsection{The fixed sets and their quotients}

We now proceed to compute the fixed sets and sectors.

\begin{enumerate}
\item Case $\emptyset$:
The fixed set is $\MaxTorus$ and the centraliser is $W(E_6)$, which acts on $\MaxTorus$ with quotient a 6-simplex.

\medskip
\item Case $A_1$:
The fixed variety is a single $5$-torus $(\alpha,\beta,\gamma,\delta,\epsilon,1)$. This is canonically identified with the maximal torus for the Lie group of type $A_5$. The centraliser of $s_0$ is the direct product of $\group{s_0}$ acting trivially with the Weyl group $\group{s_1,s_2,s_3,s_4,s_5}$ of type $A_5$ acting on the $A_5$ maximal torus. The quotient of the fixed set by the centraliser is a 5-simplex.

\medskip\item Case $A_1^2$:
The fixed variety is a single $4$-torus $(\alpha,\alpha^2,\gamma,\delta,\epsilon,1)$ inside the $5$-torus where $\zeta=1$, hence we can use $\mathfrak{su}_6$ coordinates as above.

The fixed set of our element is then identified with the subset of the $SU_6$-torus
\begin{equation}\label{4-torus}
\{(\alpha, \alpha, \gamma'', \delta'', \epsilon'',\zeta''): \alpha, \gamma'', \delta'', \epsilon'', \zeta''\in \Ti\hbox{ and } \alpha^2\gamma''\delta''\epsilon''\zeta''=1\}.
\end{equation}

 The centraliser is the direct product of $\langle s_0\rangle\wr \langle u_2\rangle$ with the symmetric group $S_4=\group{s_3,s_4,s_5}$, where $s_0$ acts trivially and $S_4$ acts by permutation of the last four coordinates. First we factor out the action of the $S_4$ which we will do in logarithmic coordinates $(x_1, x_1, x_3, \ldots, x_6)$ subject to the constraint $x_1+x_1+x_3+x_4+x_5 +x_6 =0$. We take $x_1$ to vary from $-\sfrac12$ to $\sfrac12$.
 
 In these coordinates the elements of the $S_4$ factor act by permuting the coordinates $x_3, \ldots, x_6$, and have fundamental domain $D$ given by

 \begin{align*}
 D=\{(x_1, x_1,  x_3, \ldots x_6)\mid x_1\in [-\sfrac12, \sfrac12],  2x_1+x_3+x_4+x_5 +x_6 &=0,\\ x_3\leq x_4\leq x_5\leq x_6&\leq x_3+1\}.
 \end{align*}
 See \cite[Equation 11]{niblo2019stratified} for the details. 
 
 As noted in Example 7.2 \emph{ibid} the quotient of the $4$-torus (\ref{4-torus}) under the action of this $S_4$ is an orientable twisted $3$-simplex bundle over the circle parameterised by $\alpha$ with  monodromy defined by 
 \[
 (-\sfrac12, -\sfrac12, x_3, x_4, x_5, x_6)\mapsto (\sfrac12, \sfrac12, x_5-1, x_6-1, x_3, x_4).
 \]
 Now we consider the action of the element $u_2$. Since this commutes with the $S_4$ we can act directly on the quotient bundle described above.
Recall that $u_2$ negates $r_3, r_4, r_5$ and exchanges $r_0$ with $-r_1$ and $r_2$ with $-r_6$, so the action of $u_2$ on the fixed set is:
 \[
 (\alpha,\alpha^2,\gamma,\delta,\epsilon,1)\mapsto (\alpha,\alpha^{2},\alpha^{3}\gamma^{-1},\alpha^{2}\delta^{-1},\alpha\epsilon^{-1},1).
 \]
 Changing coordinates as above this becomes
 \[
 (\alpha, \alpha, \gamma'', \delta'', \epsilon'',\zeta'')\mapsto (\alpha, \alpha, (\alpha\gamma'')^{-1}, (\alpha\delta'')^{-1}, (\alpha\epsilon'')^{-1},(\alpha\zeta'')^{-1}).
 \]
 In logarithmic coordinates this is
 \[
 (x_1, x_1,  x_3, x_4, x_5, x_6)\mapsto (x_1, x_1, -x_1-x_3, -x_1-x_4, -x_1-x_5, -x_1-x_6).
 \]
 While this does not preserve the fundamental domain $D$ for the $S_4$ action, composing with the permutation that reverses the last 4 coordinates returns us to $D$, so we have the map 
 \[
 (x_1, x_1,  x_3, x_4, x_5, x_6)\mapsto (x_1, x_1, -x_1-x_6, -x_1-x_5, -x_1-x_4, -x_1-x_3).
 \]
 
 In the $x_1=0$ fibre the vertex set of the simplex is
\begin{align*}
v_0(0)&=(0,0,0,0,0,0),\\
v_1(0)&=(0,0,-\sfrac14,-\sfrac14,-\sfrac14,\sfrac34),\\
v_2(0)&=(0,0,-\sfrac12,-\sfrac12,\sfrac12,\sfrac12),\\
v_3(0)&=(0,0,-\sfrac34,\sfrac14,\sfrac14,\sfrac14)
 \end{align*}
 and in a general fibre the vertex set $\{v_i(x_1)\}$ is given by translating each of these vectors by $(x_1,x_1,-\frac {x_1}2,-\frac {x_1}2,-\frac {x_1}2,-\frac {x_1}2)$
  
 In each simplex two of the vertices are fixed by the $u_2$ action, namely $v_0(x_1)$ and $v_2(x_1)$ while the other two are interchanged. We think of the fibres as given by the join of two $1$-simplices corresponding to the edge $e_0=[v_0(x_1),v_2(x_1)]$ and the edge  $e_1=[v_1(x_1),v_3(x_1)]$.
 
The action of $u_2$ on the simplex bundle preserves the fibres and moreover preserves the join structure on each, yielding again a simplex bundle with fibres given by the join of $e_0$ with the quotient of $e_1$ by inversion.

 The quotient of the fixed set by the centraliser is therefore a simplex bundle over the circle, where the monodromy is given by the reflection of the $3$-simplex which inverts the $e_0$ factor and fixes the $e_1$ factor.  This yields a Seifert fibred $4$-orbifold: the base space is the quotient of the join of $e_0$ and $e_1$ by the Klein $4$ group inverting both of these edges,
 \cite{scott1983geometries}.

\medskip\item Case $A_2$:
The fixed variety is the  $4$-torus $(\alpha,\beta,1,\delta,\epsilon,1)$ inside the standard $5$-torus from case $2$. We again make use of $\mathfrak{su}_6$ coordinates. Under this identification the $A_5$ Weyl group $\group{s_1,\dots,s_5}$ acts by permuting the coordinates. The fixed set of our element is then identified with the subset $\{(\alpha, \beta\alpha^{-1}, \beta^{-1}, \delta,\epsilon\delta^{-1}, \epsilon^{-1}): \alpha, \beta, \delta, \epsilon\in \Ti\}$. Since the first three coordinates and the last three coordinates each multiply to $1$ this is naturally a product of two maximal tori of type $A_2$.

The centraliser  has the form $\langle s_0s_6\rangle \times\left(\langle s_1,s_2\rangle \wr \langle u_1\rangle \right)\cong C_3\times (S_3\wr C_2)$ with the element $s_0s_6$ acting trivially.  The subgroups $\langle s_1, s_2\rangle$ and $\langle s_5, s_4\rangle = \langle s_1, s_2\rangle^{u_1}$ act as the Weyl groups on these 2-torus factors so the direct product $S_3\times S_3$ acts in the natural way on $\TT^2\times \TT^2$ with quotient a product of two equilateral triangles. The element $u_1$ acts to swap these, so the quotient is the symmetric product of two copies of an equilateral triangle.

\medskip\item Case $A_1^3$:
The fixed set is a single $3$-torus $(\alpha, \alpha^2, \gamma, \epsilon^2, \epsilon, 1)$, and is the intersection of the fixed sets of each of the elements $s_0, s_1, s_5$. The centraliser is the direct product of $\group{s_3}$ with the wreath product $\group{s_0}\wr \group{u_1, u_2}$, where the conjugates of $\group{s_0}$ are $\group{s_1}, \group{s_5}$. Since these act trivially the centraliser acts as $\group{u_1, u_2}\times \group{s_3}$. 

Recall that $u_1$ exchanges $r_1$ with $-r_5$, $r_2$ with $-r_4$ and negates $r_3,r_6$, so on the fixed set the action of $u_1$ exchanges the values of $\alpha$ and $\epsilon^{-1}$, and inverts $\gamma$.  On the other hand $u_2$ is given by 
 \[
 (\alpha,\alpha^2,\gamma,\epsilon^2,\epsilon,1)\mapsto (\alpha,\alpha^{2},\alpha^{3}\gamma^{-1},\alpha^{2}\epsilon^{-2},\alpha\epsilon^{-1},1),
 \]
 see $A_1^2$ case.
 We decompose our $3$-torus as the product 
\[
\TT^2\times \TT^1=\{(\alpha, \alpha^2, \alpha\epsilon, \epsilon^2,  \epsilon, 1)\}\times \{(1,1,\gamma, 1,1,1)\}.
\]
and note that both factors are preserved by the actions of $u_1,u_2$.  Moreover $s_3$ pointwise fixes the first factor and inverts the second.  Since $u_1,u_2$ also invert the second factor, the products $u_1s_3, u_2s_3$ (which also generate a dihedral group) pointwise fix the second factor, and act on the first as
\begin{align*}
u_1s_3:& (\alpha,\epsilon)\mapsto (\epsilon^{-1},\alpha^{-1})\\
u_2s_3:& (\alpha,\epsilon)\mapsto (\alpha,\alpha\epsilon^{-1})
\end{align*}
which is the standard dihedral action with quotient an equilateral triangle.

Hence the quotient of the fixed set of $s_0s_1s_5$ by the centraliser is
\[
\TT^2/\group{u_1s_3,u_2s_3}\times \TT^1/\group{s_3} = \Delta^2\times I.
\]

\medskip\item Case $A_2\times A_1$:
The  fixed set is the single $3$-torus $(\alpha, \alpha^2, 1, \delta, \epsilon, 1)$ and the centraliser is $\group{s_0s_6}\times \langle s_1\rangle \times \langle s_4,s_5\rangle\cong C_3\times C_2\times S_3$. The first two factors act trivially so we are left with a residual action of $S_3=\group{s_4, s_5}$.  These generators act as the matrices $\left(\begin{array}{cc}-1 & 1 \\0 & 1\end{array}\right), \left(\begin{array}{cc}1 & 0 \\1 & -1\end{array}\right)$ on the free parameters $(\delta, \epsilon)$, fixing the parameter $\alpha$. The non trivial  part of the action is therefore the standard dihedral action on a $2$-torus  with quotient an equilateral triangle, crossed with the trivial action on the remaining circle. Hence the quotient is $\TT^1\times \Delta^2$.

\medskip\item Case $A_3$:
The fixed set is a single $3$-torus $(\alpha,\beta,1,\beta^{-1}, \epsilon, 1)$ and the centraliser of $s_0s_6s_3$  has the structure $\group{s_0s_6s_3}\times (\langle s_1\rangle\wr \langle u_1\rangle)\cong C_4\times (C_2\wr C_2)$, so we need to compute the quotient by the action of the wreath product $\langle s_1\rangle\wr \langle u_1\rangle$.

We begin by identifying the quotient under the action of $s_1$ and $s_1^{u_1}=s_5$.  The elements $s_1$ and $s_5$ both act trivially on the $\beta$ factor of the $3$-torus, with $s_1$ acting on the $\alpha, \beta$ subtorus by the matrix $\left(\begin{array}{cc}-1 & 1 \\0 & 1\end{array}\right)$ and $s_5$ acting on the $\beta, \epsilon$ subtorus via $\left(\begin{array}{cc}1 & 0 \\-1 & -1\end{array}\right)$.

We introduce logarithmic coordinates for $\beta$, i.e.\ set $\beta=e^{2\pi i x}$.  The 3-torus is then
$$\{(\alpha,x,\epsilon): \alpha,\epsilon\in \Ti, x\in [0,1]\}/ (\alpha,1,\epsilon)\sim (\alpha,0, \epsilon).$$
Now introduce coordinates: $\alpha' = \alpha e^{-\pi i x}$, and $\epsilon' = \epsilon e^{\pi i x}$. Under this change of coordinates $ (\alpha,1, \epsilon)$ goes to $ (\alpha',1, \epsilon')=(-\alpha, 1 ,-\epsilon)$ so in these coordinates the torus is:
$$\{(\alpha',x,\epsilon'): \alpha',\epsilon'\in \Ti, x\in [0,1]\}/ (\alpha',1,\epsilon')\sim (-\alpha',0, -\epsilon').$$
The action of $s_1$ fixes $\beta=e^{2\pi i x}$ and $\epsilon$, and hence also fixes $\epsilon'$, while
$$\alpha \mapsto \alpha^{-1}\beta = (\alpha'e^{\pi i x})^{-1}e^{2\pi i x} = (\alpha')^{-1}e^{\pi i x}.$$
Hence in primed coordinates, $s_1$ takes $\alpha'$ to $(\alpha')^{-1}e^{\pi i x}e^{-\pi i x}=(\alpha')^{-1}$.

Similarly $s_5$ fixes $\alpha'$ and $x$ and takes $\epsilon$ to $\epsilon^{-1}\beta^{-1}=(\epsilon')^{-1}e^{-\pi i x}$.  Hence $\epsilon'\mapsto (\epsilon')^{-1}$.

Setting $I=\{\phi\in \Ti : \mathrm{Im}\, \phi\geq 0\}$ which we identify in the natural way as the quotient of $\Ti$ by the map $\phi\mapsto \phi^{-1}$, the quotient of the torus by $\group{s_1,s_5}$ is given by
$$\{(\alpha',x,\epsilon'): \alpha',\epsilon'\in I, x\in [0,1]\}/ (\alpha',1,\epsilon')\sim ((-\alpha')^{-1},0, (-\epsilon')^{-1}).$$
This is a square bundle over the circle, whose monodromy map is rotation by $\pi$.

Now the action of $u_1$ preserves $\beta$ (and hence $x$), and exchanges $\alpha$ with $\epsilon^{-1}$. In primed coordinates
\begin{align*}
u_1: (\alpha',x,\epsilon')&\mapsto ((\epsilon')^{-1},x,(\alpha')^{-1})
\end{align*}
so descending to the square bundle the action simply exchanges $\alpha'$ with $\epsilon'$.

The quotient of each fibre is a right-isosceles triangle, hence the quotient of the fixed set by the centraliser is a triangle bundle over the circle with monodromy map reflecting the triangle. This is a Seifert bundle over the $(4,4,2)$ triangle, \cite{scott1983geometries}.

\medskip\item Case $A_1^4$:
The fixed set is the product of a $2$-torus indexed by $\alpha,\epsilon$ with a Klein 4 group generated by independent square-roots of $1$ denoted $\eta = \gamma(\zeta\alpha\epsilon)^{-1}$ and $\zeta$.
This gives coordinates $(\alpha, \zeta\alpha^2, \eta\zeta\alpha\epsilon, \zeta\epsilon^2, \epsilon, \zeta)$.

The centraliser is
\[
\group{s_0, s_1, s_5, T}\rtimes \group{u_1,u_2}\cong W(D_4)\rtimes S_3.
\]
We will now change to $\mathfrak{su}_3^3$ coordinates in which the actions of the generators of the centraliser are all relatively straightforward.

Recall that the tuple $(\alpha':\beta':\epsilon':\delta':\eta':\zeta')$ in new coordinates corresponds to $(\eta'(\alpha')^{-1},(\eta')^2\beta',(\eta')^3,(\eta')^2\delta',\eta'(\epsilon')^{-1},(\eta')^2\zeta')$ in the original coordinates so in the new coordinates a point of the fixed set must satisfy
\begin{align*}
(\eta')^2\beta'&=\zeta(\eta'(\alpha')^{-1})^2\\
(\eta')^3&=\eta\zeta(\eta'(\alpha')^{-1})(\eta'(\epsilon')^{-1})\\
(\eta')^2\delta'&=\zeta(\eta'(\epsilon')^{-1})^2\\
(\eta')^2\zeta'&=\zeta
\end{align*}
where $\eta,\zeta$ are $\pm{1}$ giving the element of the Klein $4$ group and the other variables lie in the circle. This simplifies to
\begin{align*}
\beta'&=\zeta(\alpha')^{-2}\\
\alpha'\epsilon'\eta'&=\zeta\eta\\
\delta'&=\zeta(\epsilon')^{-2}\\
\zeta'&=\zeta(\eta')^{-2}
\end{align*}
so for $(\eta,\zeta)$ in the Klein $4$ group $V_4$, the corresponding component of the fixed set is parametrised as
\[
\{(\alpha',\epsilon',\zeta') : \alpha'\epsilon'\eta'=\zeta\eta \}/\group{(\omega,\omega,\omega)}
\]
where $\omega$ is a non-trivial cube root of $1$.

In these coordinates the subgroup $\group{u_1,u_2}$ simply acts by the signed permutations:
\begin{align*}
u_1&: (\alpha':\epsilon':\zeta')\mapsto((\epsilon')^{-1}:(\alpha')^{-1}:(\zeta')^{-1})\\
u_2&: (\alpha':\epsilon':\zeta')\mapsto((\zeta')^{-1}:(\epsilon')^{-1}:(\alpha')^{-1})
\end{align*}
and preserve $\eta,\zeta$. As noted in Section \ref{section:special} this action on the $PSU_3$ torus is equivariantly isomorphic to the permutation action on the $SU_3$ torus, for which the quotient is an equilateral triangle.

The element $T$ acts by multiplication by $\phi$ where $\phi$ is a cube root of
\[(\alpha'\zeta(\alpha')^{-2}\epsilon'\zeta(\epsilon')^{-2}\eta'\zeta(\eta')^{-2})^{-1}
=\alpha'\epsilon'\eta'\zeta = \eta.
\]
Since $\eta=\pm1$ the action simply multiplies the projective coordinates  $\alpha',\epsilon',\eta'$ by $\eta$ and hence as $\alpha'\epsilon'\eta'=\zeta\eta$ the coordinate $\zeta$ is also multiplied by $\eta$.

We now turn to the actions of $s_1,s_5$ and $s_0$.  It suffices to consider the element $s_1$ as the group $\group{u_1,u_2}$ conjugates $s_1,s_5$ and $s_0$. For a general point we have:
\[
s_1:(\alpha':\beta':\epsilon':\delta':\eta':\zeta')\to((\alpha'\beta')^{-1}:\beta':\epsilon':\delta':\eta':\zeta')
\]
so applying this to an element $(\alpha':\zeta(\alpha')^{-2}: \epsilon':\zeta(\epsilon')^{-2}:\eta':\zeta(\eta')^{-2})$ we obtain
\[
(\zeta\alpha':\zeta(\zeta\alpha')^{-2}: \epsilon':\zeta(\epsilon')^{-2}:\eta':\zeta(\eta')^{-2})
\]
Since the product of the coordinates $\alpha'\epsilon'\zeta'$ changes by a factor of $\zeta$, this means that $\eta$ is changed by $\zeta$.

In the identity component $\eta=\zeta=1$, so $T,s_1$ (and hence also $s_5,s_0$) act trivially.   Hence $\MaxTorus^w_1/Z_W(w)$ is the quotient of the $PSU_3$ torus
\[
\{(\alpha',\epsilon',\zeta') :  \alpha'\epsilon'\zeta'=1\}/C_3
\]
by the signed permutation action of $\group{u_1,u_2}$.  As noted above this can be identified with the quotient of the $SU_3$ torus by its Weyl group, giving an equilateral triangle.

Turning to the components indexed by the non-identity elements of the Klein $4$ group we see that these are permuted transitively by $\group{s_1,T}$, hence there is just one more component in the quotient. We obtain this by taking the quotient of the $(\eta,\zeta)=(1,-1)$ component by its stabiliser in the centraliser, noting that this component is fixed pointwise by $T$.  This component is parametrised by
\[\{(\alpha',\epsilon',\zeta') : \alpha'\epsilon'\zeta'=-1\}/C_3,
\]
and we now determine its stabiliser.

As noted above the elements $u_1,u_2$ preserve all the components so we begin by considering the action of the subgroup $W(D_4)$ on the set of the three non-identity components. This action gives a map from $W(D_4)=\group{s_1,s_5,s_0,T}$ to the permutation group $S_3$ in which $s_1,s_5,s_0$ all map to one transposition and $T$ maps to another.  Thus the component stabiliser in $W(D_4)$ is generated by $T$ along with the kernel of this map.  Clearly the elements $s_1s_5$, $s_0s_5$, $(s_1s_5)^T$ and $(s_0s_5)^T$ lie in the kernel. We note that $s_1s_5(s_0s_5)^T$ has order $4$ (it corresponds to a pair of rotations by $\pi/2$ in the standard representation of $W(D_4)$) hence $\group{s_1s_5,(s_0s_5)^T}$ is a dihedral group of order $8$, and similarly for $\group{s_0s_5,(s_1s_5)^T}$.  All the other pairs from  $s_1s_5$, $s_0s_5$, $(s_1s_5)^T$ and $(s_0s_5)^T$ have product of order $2$ and hence pairwise commute.  The intersection of the two dihedral groups is precisely the centre of each group (which has order 2) and hence together they generate a group of order $32$ which is thus the whole of the kernel.

Hence the component stabiliser in $W(D_4)$ is the semidirect product
\[\group{s_1s_5,s_0s_5, (s_1s_5)^T,(s_0s_5)^T}\rtimes \group{T}=\group{s_1s_5,s_0s_5,T}.\]
As noted above the action of $T$ on the  $(\eta,\zeta)=(1,-1)$ component  is trivial. On the other hand $s_1s_5$ acts by $(\alpha':\epsilon':\zeta')\mapsto (-\alpha':-\epsilon':\zeta')$ and similarly $s_0s_5$ acts by $(\alpha':\epsilon':\zeta')\mapsto (\alpha':-\epsilon':-\zeta')$.  The quotient of the component under this action can be identified with
\[\{(\alpha'',\epsilon'',\zeta'') : \alpha''\epsilon''\zeta''=1\}/C_3,
\]
via the map $(\alpha':\epsilon':\zeta')\mapsto ((\alpha')^2:(\epsilon')^2:(\zeta')^2)$.

It remains to take the quotient of this by the action of $\group{u_1,u_2}$ which acts by signed permutation of the coordinates $(\alpha')^2,(\epsilon')^2,(\zeta')^2$.  The result therefore agrees with the quotient of the identity component (though geometrically one is $4$ times the area of the other), giving the quotient $\MaxTorus^w/Z_W(w)$ as the union of two equilateral triangles.

\medskip\item Case $A_2\times A_1^2$:
The fixed set is the single $2$-torus with coordinates $(\alpha, \alpha^2, 1, \epsilon^2, \epsilon, 1)$.   The centraliser  is $\group{s_0s_6}\times (\group{s_1}\wr \group{u_1})$ and the elements $s_0s_6,s_1$ both act trivially on the fixed set in the Lie algebra, and hence on its fixed torus. This leaves the action of $u_1$ to consider, which exchanges and inverts $\alpha,\epsilon$.  Replacing the variable $\epsilon$ by $\epsilon'=\epsilon^{-1}$, the quotient is therefore the symmetric product of two copies of $\TT^1$, which is a M\"obius band.

\medskip\item Case $A_2^2$:
The fixed set is a family of three $2$-tori indexed by the centre as follows:
\[
\{(\beta^{-1}, \beta, 1, \beta^{-1}, \beta, 1): \beta^3=1\}\times \{(1,1, 1, \delta, \epsilon, 1) : \delta,\epsilon\in \Ti\}
\]
Since the centre is pointwise fixed by the Weyl group, the quotient is 3 copies of the quotient of the identity component.

We write the centraliser as $(\group{s_1s_2}\wr\group{u_2})\times \group{s_4,s_5}$.  Note that $s_1s_2$ acts trivially on the fixed set, hence the action is just by the quotient group $\group{u_2}\times \group{s_4,s_5}=\group{s_4,s_5u_2}\cong \D_6$.

The element 
$s_5u_2$
takes  $(1,1, 1, \delta, \epsilon, 1)$ to $(1,1,1, \delta^{-1},\delta^{-1}\epsilon, 1)$
while the element $s_4$ acts as
    \[s_4:(1, 1, 1, \delta, \epsilon, 1)\mapsto (1, 1, 1, \delta^{-1}\epsilon, \epsilon, 1)\]

Taking a hexagonal fundamental domain for the torus, (dual to the $r_4,r_5$ lattice) we have the standard action of the dihedral group $\D_6$ yielding a $(3,4,6)$ triangle.

Hence the quotient of the fixed set by the centraliser is three copies of the triangle.

\medskip\item Case $A_3\times A_1$:
The fixed set is a single $2$-torus with coordinates $(\alpha, \alpha^2, 1, \alpha^{-2}, \epsilon, 1)$.

The centraliser is $\group{s_0s_6s_3}\times\group{s_1}\times \group{s_5}$, with $s_0s_6s_3,s_1$ acting trivially.  The element $s_5$ takes $(\alpha, \alpha^2, 1, \alpha^{-2}, \epsilon, 1)$ to $(\alpha, \alpha^2, 1, \alpha^{-2}, \epsilon^{-1}\alpha^{-2}, 1)$.  Changing coordinates to $\alpha,\epsilon'$ where $\epsilon'=\alpha\epsilon$ we see that the action takes $\epsilon'=\alpha\epsilon$ to $\alpha(\epsilon^{-1}\alpha^{-2})=(\epsilon')^{-1}$, while leaving $\alpha$ unchanged.  Hence the quotient is a cylinder.

\medskip\item Case $A_4$:
The fixed set is a single $2$-torus with coordinates $(\alpha, \beta, 1, \beta^{-1}, \beta^{-2}, 1)$.

The centraliser is $\group{s_0s_6s_3s_4}\times\group{s_1}$ with the first factor acting trivially.  The action of $s_1$ takes $\alpha$ to $\alpha^{-1}\beta$ and leaves $\beta$ unchanged.  Setting $\beta'=\alpha^{-1}\beta$
this action simply exchanges $\alpha,\beta'$ hence the quotient is the symmetric product of two copies of $\TT^1$ (a M\"obius band).

\medskip\item 
\medskip Case $D_4$ :
The fixed set is a single $2$-torus $(\alpha, \alpha^2, \alpha\epsilon, \epsilon^2, \epsilon, 1)$, which is the $\zeta=\eta=1$ component of the fixed set of $s_0s_1s_5s_3$ (case 8).  Indeed our representative is chosen so that its cube equals $s_0s_1s_5s_3$  so of course the fixed sets must be nested.  The centraliser is $\group{s_0s_1s_5T}\times\group{u_1, u_2}$ with the first factor acting trivially. As in case 8 the action of $\group{u_1, u_2}$ can be equivariantly identified with the permutation action of $\D_3$ on the $SU_3$ torus yielding the quotient as an equilateral triangle.

\medskip\item Case $D_4[a_1]$:
The fixed set is the same single $2$-torus $(\alpha, \alpha^2, \alpha\epsilon, \epsilon^2, \epsilon, 1)$ as for the previous case of type $D_4$, and here we note that the square of the representative yields the element  $s_0s_1s_5s_3$. The centraliser is the complex reflection group $G_8$ given by $\group{s_5u_3,s_5^Tu_1}$.  Again we will use the calculations from case 8, with the fixed set described as 
\[
\{(\alpha',\epsilon',\zeta') :  \alpha'\epsilon'\zeta'=1\}/C_3.
\]
As noted in that case, the elements $s_5,T$ act trivially on this set, hence we again reduce to the action of the $\group{u_3,u_1}=\group{u_1,u_2}$ giving an equilateral triangle.

\medskip\item Case $A_2^2\times A_1$:
The fixed set consists of $3$ circles indexed by a cube root of unity $\beta$ with coordinates $(\beta^2, \beta, 1, \epsilon^2, \epsilon, 1)$. Each of these circles contains one of the elements of the centre, indeed we can write elements of the fixed set in the form
\[(\beta^2, \beta, 1, \beta^2, \beta, 1)(1, 1, 1, (\epsilon')^2, \epsilon', 1).
\]
The first factor is central, hence fixed by the whole Weyl group, and the second lies in the identity component of the fixed set, and hence is fixed by $s_0,s_6,s_5,s_1,s_2$.  The centraliser is $(\group{s_0s_6}\wr\group{u_2})\times \group{s_5}$ where only the element $u_2$ acts non-trivially.  This acts on the second factor by inverting $\epsilon'$.  The quotient is therefore 3 intervals.

\medskip\item Case $A_3\times A_1^2$:
The fixed set is a family of four circles, indexed by $\zeta,\eta$ with $\zeta^2=\eta^2=1$.  We have coordinates $(\epsilon^{-1}\eta, \epsilon^{-2}\zeta, \zeta, \epsilon^2\zeta, \epsilon, \zeta)$.  We factorise this as
\[
(\epsilon^{-1}\eta, \epsilon^{-2}, \eta, \epsilon^2, \epsilon, 1)(1,\zeta,\eta\zeta,\zeta,1,\zeta).
\]
This factorisation is chosen so that for each choice of $\eta,\zeta$ we obtain a circle lying in the corresponding 2-torus appearing in case 8.

The centraliser is $\group{s_0s_6s_3s_1s_5}\times \langle s_1, T, s_5\rangle$ with the first factor acting trivially. The second factor lies in the centraliser from case 8, which contains the group $W(D_4)=\group{s_1,T,s_5,s_0}$.  The subgroup $\group{s_1,T,s_5}$ is sufficient to transitively permute the three non-identity components in case 8 and hence transitively permutes the circles in this case.

Since the centre lies in the circle $\zeta=\eta=1$ it follows that this circle is fixed by the whole of $\group{s_0s_6s_3s_1s_5}\times \langle s_1, T, s_5\rangle$.

For the non-identity components, as in case 8 consider the component $(\eta,\zeta)=(1,-1)$ which is pointwise fixed by $T$.  This component is stabilised by $\group{T,s_1s_5}\cong \D_4$ and this must be the whole stabiliser as it has index $3$ in $\group{s_1,T,s_5}\cong S_4$.  As the element $T$ acts trivially it remains to take the quotient by the action of $s_1s_5$. This fixes $(\epsilon^{-1}\eta, \epsilon^{-2}, \eta, \epsilon^2, \epsilon, 1)$ and takes $(1,\zeta,\eta\zeta,\zeta,1,\zeta)$ to
\[
(\zeta,1,1,1,\zeta,1)(1,\zeta,\eta\zeta,\zeta,1,\zeta)
\]
hence it has the effect of negating $\epsilon$.  Thus the quotient of the non-identity components yields a circle.

The quotient of the fixed set by the centraliser is therefore two circles.

\medskip\item Case $A_4\times A_1$:
The fixed set is a single circle with coordinates $(\alpha, \alpha^{-2}, 1, \alpha^{-2}, \alpha^{-4}, 1)$. As noted after Theorem \ref{the fixed sets} this circle contains the centre $\cZ$ of the Lie group, hence the action of the centraliser must be trivial.

\medskip\item Case $A_5$:
The fixed set is a family of three circles indexed by a choice $\epsilon$ of a cube root of unity and with coordinates $(\alpha, \epsilon, 1, \epsilon^2, \epsilon, 1)$. As in case 15 this can be written as a product of a circle  
with the centre so that points of the fixed set take the form
\[(\epsilon^2, \epsilon, 1, \epsilon^2, \epsilon, 1)(\alpha\epsilon, 1, 1, 1, 1, 1).
\]
The centraliser is $\group{s_0s_6s_3s_4s_5}\times \group{s_1}$ with the first factor acting trivially.  The second factor inverts $\alpha\epsilon$ hence gives a reflection on each of the three circles.

Hence the quotient is three copies of the interval.

\medskip\item[(19,20)] Cases $D_5$,$D_5[a_1]$:
In both cases the fixed set is a single circle with coordinates $(\delta^{-2},\delta^{-1},1,\delta, \delta^2,1)$. As in case 17, this must contain the three elements of the centre so is fixed pointwise, yielding a circle as the quotient.

\medskip
\addtocounter{enumi}{2}
\item \label{Fixed set A_2^3} Case $A_2^3$:
The fixed set is the finite group $C_3\times C_3\times C_3$ indexed by three choices of cube roots of $1$: $\beta, \epsilon, \zeta$. In coordinates it is given by: $(\beta^2\zeta, \beta, 1, \epsilon^2\zeta, \epsilon, \zeta)$. We can split the central points $(\epsilon^2, \epsilon, 1, \epsilon^2, \epsilon, 1)$ off from this and writing $\beta'=\beta\epsilon^2\zeta^2$ yields a factorisation as a product as follows:
\[
(\epsilon^2,\epsilon,1,\epsilon^2,\epsilon,1)(1,\zeta, 1,\zeta,1,\zeta)((\beta')^2,\beta',1,1,1,1).
\]
The first factor is the centre so fixed by everything.  We identify the fixed set as an $\F_3$ vector space $V$ with basis
\[
e_1=(\omega^2,\omega,1,\omega^2,\omega,1), e_2=(1,\omega, 1,\omega,1,\omega), e_3=(\omega^2,\omega,1,1,1,1)
\]
where $\omega=e^{2\pi i /3}$.  We take the dual basis $\{e_1^*,e_2^*,e_3^*\}$ for $V^*$.

As the first basis vector $e_1$ is fixed by the centraliser we obtain an affine action of the centraliser on the affine plane $\mathbb{A}=\{v^* \in V^*\mid \langle v^*, e_1\rangle = 1\}$.

We now consider the action of the three generators $s_0s_6, Ts_3, s_5s_4$ on $V$.  Starting with $s_0s_6$, this fixes $e_1$ and $e_3$, and takes $e_2$ to $e_1+e_2+e_3$. Similarly $s_5s_4$ fixes $e_1$ and $e_3$, and takes $e_2$ to $2e_1+e_2+e_3$.

The element $Ts_3$ fixes $e_1, e_2$, and takes $e_3$ to $e_3+2e_2$.

In matrix form the generators $s_0s_6, Ts_3, s_5s_4$ act on $V$ as:
\[
\begin{pmatrix}
1&1&0\\
0&1&0\\
0&1&1
\end{pmatrix},
\begin{pmatrix}
1&2&0\\
0&1&0\\
0&1&1
\end{pmatrix},
\begin{pmatrix}
1&0&0\\
0&1&2\\
0&0&1
\end{pmatrix}
\]

Dually, the action on $V^*$ is given by:
\[
\begin{pmatrix}
1&0&0\\
1&1&1\\
0&0&1
\end{pmatrix},
\begin{pmatrix}
1&0&0\\
2&1&1\\
0&0&1
\end{pmatrix},
\begin{pmatrix}
1&0&0\\
0&1&0\\
0&2&1
\end{pmatrix}
\]
It is not hard to see that these generate the Hessian group, i.e.\ the group of all orientation preserving affine maps on $\mathbb{A}$.

Dualising the action back to $V$ is therefore all orientation preserving linear maps which fix the vector $e_1$.  
The quotient is therefore $4$ points, corresponding to the three points on the line spanned by $e_1$ (the centre of the Lie group) and a single orbit for all the remaining vectors in $V$. 

\medskip
\item Case $A_5\times A_1$:
 In coordinates the fixed set is given by
 \[(\eta\epsilon^2, \zeta \epsilon, \zeta, \zeta \epsilon^2, \epsilon, \zeta)=(\eta\epsilon^2, \epsilon, \eta, \epsilon^2, \epsilon, 1)(1,\zeta,\zeta\eta,\zeta,1,\zeta)
 \]
 where $\epsilon$ is a cube root of unity and $\zeta^2=\eta^2=1$.  We choose this factorisation to the the copy of the Klein group $V_4$ matches with that appearing in cases 8 and 16.

The centraliser is $\group{s_0s_6s_3s_4s_5s_1}\times \langle s_1,T\rangle$ with the first factor acting trivially.  The fixed set lies in the fixed set from case 16, and the group $\langle s_1,T\rangle$ is a subgroup of the centraliser from that case.

The $\eta=\zeta=1$ part is the centre of the Lie group and is thus fixed by the centraliser.  As noted in case 16, the three other possible values for $(\eta,\zeta)$ are transitively permuted by $\group{s_1,T}$. The triple with $(\eta,\zeta)=(1,-1)$ is pointwise fixed by $T$ and its stabiliser in the group $\group{s_1,T}$ has index $3$ and is thus $\group{T}$. Hence the 6 points given by $(\eta,\zeta)=(1,1)$ and $(\eta,\zeta)=(1,-1)$ form a strict fundamental domain for the action of the centraliser.

The quotient is therefore 6 points.

\bigskip

\medskip\item[(23--25)] Cases $E_6,E_6[a_1],E_6[a_2]$:
 In each case the fixed set is the centre of the Lie group $(\epsilon^2, \epsilon, 1, \epsilon^2, \epsilon, 1)$ where $\epsilon$ ranges over the cube roots of $1$. Each of these three cases provides three points in the quotient.

\end{enumerate}

\section{Fixed sets and their quotients: the group of adjoint type}\label{section: adjoint fixed sets}

In this section we consider the case of the $E_6$ Lie group of adjoint type. The maximal torus in this case is the torus $\MaxTorus^\vee=\MaxTorus/\cZ$ where $\cZ$ denotes the centre of the Lie group of simply connected type. We will determine the quotients $(\MaxTorus^\vee)^w/Z_W(w)$ and thus establish the following:

\begin{theorem3}
\label{E6 sector theorem}
\thmiii
\end{theorem3}

As discussed in Section \ref{Component groups} there are two possibilities for the fixed sets $(\MaxTorus^\vee)^w$:  either the centre lies in the identity component $\MaxTorus^w_1$, in which case $(\MaxTorus^\vee)^w=\MaxTorus^w/\cZ$, or $\cZ$ injects into the component group $F_w$, in which case $(\MaxTorus^\vee)^w_1=\MaxTorus^w_1$.  The first case we will refer to as having ramified torus while the latter case has ramified component group.

The component group is ramified in cases $A_2^2,A_2^2\times A^1,A_5$ as well as the elliptic cases $A_2^3, A_5\times A_1, E_6,E_6[a_1],E_6[a_2]$.  In all the other cases the torus is ramified.

\subsection{Ramified torus}
In this case the fixed set $(\MaxTorus^\vee)^w$ is given by $\MaxTorus^w/\cZ$ where $\MaxTorus^w\cong \MaxTorus^w_1\times F_w$.  Hence $(\MaxTorus^\vee)^w\cong \MaxTorus^w_1/\cZ \times F_w$ (with $F_w=F_w^\vee$).

Since the action of the Weyl group arises from the conjugation action in the Lie group, this commutes with the action of the centre $\cZ$.  Hence the quotient $(\MaxTorus^\vee)^w/Z_W(w)$ is the quotient of $\MaxTorus^w$ by the action of the product $Z_W(w)\times \cZ$ and indeed we can therefore compute this as the quotient $(\MaxTorus^w/Z_W(w))/\cZ$.  The quotients $\MaxTorus^w/Z_W(w)$ were evaluated in Section \ref{fixed sets}.

Note that these quotients carry a natural metric, inherited from the Lie algebra $\t$.  The action of $\cZ$ on the quotients is induced from a translation action on $\t$ and is thus isometric.  In many cases this is sufficient to determine the quotient up to homotopy.

\begin{itemize}
    \item Case $\emptyset$: The quotient $(\MaxTorus^\vee)^w/Z_W(w)$ is $\Delta^6/\cZ$ where the contraction of $\Delta^6$ to its barycentre is equivariant, hence $\Delta^6/\cZ$ is contractible.
    
    \item Case $A_1$: The same argument shows that the quotient $\Delta^5/\cZ$ is also contractible. 
    
    \item Case $A_1^2$: The quotient $\MaxTorus^w/Z_W(w)$ in this case is a $\Delta^3$ bundle over $\TT^1$. Writing $\Delta^3$ as the join of two edges $e_0$ and $e_1$, the monodromy is given by the map inverting the edge $e_0$ and fixing $e_1$. 
    
    In logarithmic coordinates the $\Delta^3$ fibres are described by the sum of the base fibre with a point $(x_1,x_1,-\frac {x_1}{2},-\frac {x_1}{2},-\frac {x_1}{2},-\frac {x_1}{2})$.  The centre acts by translation by multiples of $(\frac23,\frac23,-\frac13,-\frac13,-\frac13,-\frac13)$ which tautologically preserves the coordinates of the fibre.
    A fundamental domain is given by $x_1\in [0,\frac13]$.  The monodromy in the identification of the fibres at $0,\frac13$ is the composition of the trivial identification of the $0,\frac43$ fibres with the (inverse of the) original monodromy in $\MaxTorus^w/Z_W(w)$.  Hence the monodromy maps agree and the quotient $(\MaxTorus^\vee)^w/Z_W(w)$ is identified with $\MaxTorus^w/Z_W(w)$.
    
    \item Case $A_2$: The quotient $\MaxTorus^w/Z_W(w)$ is $\hbox{SP}^2(\Delta^2)=(\Delta^2\times\Delta^2)/C_2$, hence the quotient $(\MaxTorus^\vee)^w/Z_W(w)$ is given by $(\Delta^2\times\Delta^2)/(C_2\times \cZ)$. The action of $C_2\times \cZ$ is isometric and hence preserves the product of the barycentres, giving a contractible quotient.
    
    \item Case $A_1^3$: The quotient is $(\Delta^2\times \Delta^1)/\cZ$. Since the action is isometric it preserves the Cartesian product structure.  Since $\cZ$ has order $3$ it preserves the $\Delta^1$ direction and the contraction to the barycentre of $\Delta^2$ times the interval $\Delta^1$ is equivariant. Hence $(\Delta^2\times \Delta^1)/\cZ$ is contractible.

    \item Case $A_2\times A_1$: The quotient $\MaxTorus^w/Z_W(w)$ is $\TT^1\times \TT^2/S_3$ giving a product $\TT^1\times \Delta^2$. The action of $\cZ$ acts diagonally by rotations on both factors. This action is fixed point free and yields a solid torus which carries a Seifert fibration over the cone $\Delta^2/\cZ$, \cite{scott1983geometries}.
    
    \item Case $A_3$: The quotient $\MaxTorus^w/Z_W(w)$ in this case is a triangle bundle over $\TT^1$, with the triangle obtained as the quotient of a square by a diagonal reflection.  The monodromy is obtained from the rotation of the square by $\pi$, which yields a diagonal flip on the triangle. The action of $\cZ$ is to translate by $1/3$ along the circle while again giving a rotation by $\pi$ on the square. Note that the cube of this element agrees with the monodromy of the original bundle as required. Hence the quotient $(\MaxTorus^\vee)^w/Z_W(w)$ is identified with $\MaxTorus^w/Z_W(w)$.
    
    \item Case $A_1^4$: The action of $\cZ$ on $\Delta^2\sqcup \Delta^2$ must preserve the components fixing the barycentre of each, and the contraction to the barycentres is equivariant.

    \item Case $A_2\times A_1^2$: The action of $\cZ$ on the symmetric product $\hbox{SP}^2(\TT^1)$ is given by the diagonal action of $\cZ$ on the two copies of the circle $\TT^1$.  Since this is a fixed point free action on the M\"obius band, the quotient is again a M\"obius band.
    
    \item Case $A_3\times A_1$: The quotient $\MaxTorus^w/Z_W(w)$ is $\TT^2/C_2=\Delta^1\times \TT^1$. The action of $\cZ$ rotates the $\TT^1$ factor, again yielding a copy of $\Delta^1\times \TT^1$.
    
    \item Case $A_4$: As in case $A_2\times A_1^2$ the centre $\cZ$ acts diagonally on $\hbox{SP}^2(\TT^1)$ giving quotient a M\"obius band.

    \item Cases $D_4$ and $D_4[a_1]$: In these cases the quotient $\MaxTorus^w/Z_W(w)$ are equal, giving a copy of $\Delta^2$. The group $\cZ$ acts by rotation on this giving a cone $\Delta^2/\cZ$.
    
    \item Case $A_3\times A_1^2$: The action of $\cZ$ must preserve the two components $\TT^1\sqcup\TT^1$, acting isometrically and fixed point freely on each circle, hence the quotient is a disjoint union $\TT^1\sqcup\TT^1$.

    \item Cases $A_4\times A_1, D_5, D_5[a_1]$: In each of these cases the centre acts fixed-point freely on $\TT_1$ and the quotient is again a circle.
    
    \end{itemize}

\subsection{Ramified component groups}

We denote elements of the centre $\cZ$ by $z_\omega=(\omega^2,\omega,1,\omega^2,\omega,1)$ where $\omega$ is a cube root of $1$.

\begin{itemize}
    \item Case $A_2^2$:
    
    Recall that $w=s_0s_6s_1s_2$ and $\MaxTorus^w=\MaxTorus^w_1\times \cZ$ where (in standard coordinates on $\TT^6$) the identity component is
    \[
    \MaxTorus^w_1=\{(1,1,1,\delta,\epsilon,1):\delta,\epsilon \in \Ti\}.
    \]
    For $\omega$ a cube root of $1$, the action of $w$ takes $p_\omega=(1,\omega^2,1,1,1,\omega)$ to 
    \[
    (\omega^2,1,1,\omega^2,\omega,\omega)=z_\omega p_\omega
    \]
    so $p_\omega$ is fixed by $w$ up to the action of $\cZ$.
    
    When $\omega\neq 1$ the point $p_\omega$ does not lie in $\MaxTorus^w$, hence the image of $p_\omega$ in the quotient by $\cZ$ lies in a non-identity component of $(\MaxTorus^\vee)^w$. We can therefore index the components of $(\MaxTorus^\vee)^w$ by $\omega$.
    
    We then have
    \[
    (\MaxTorus^\vee)^w=(\{p_\omega : \omega^3=1\}\times \MaxTorus^w)/\cZ,
    \]
    and taking a fundamental domain for the action of $\cZ$, we can identify $(\MaxTorus^\vee)^w$ with
    \[
    \{p_\omega : \omega^3=1\}\times \{(1,1,1,\delta,\epsilon,1):\delta,\epsilon \in \Ti\}.
    \]
    The centraliser of the element $w$ is $\group{s_1s_2}\wr\group{u_2}\times \group{s_4,s_5}$ and we now consider its action on $(\MaxTorus^\vee)^w$.
    
    We first consider the action of the normal subgroup $\group{s_1s_2,s_0s_6}$ in the centraliser.  (Recall that $(s_1s_2)^{u_2}=s_0s_6$.) Since $s_1s_2s_0s_6$ acts trivially it suffices to consider the action of $s_1s_2$.  This does not preserve the fundamental domain for the action of $\cZ$, so we act by $s_1s_2$ and then by the central element $z_\omega$. This takes $p_\omega$ to $(1,\omega^2,1,\omega^2,\omega,\omega)=p_\omega(1,1,1,\omega^2,\omega,1)$.
    
    Hence the action of $s_1s_2$ preserves components of $(T^\vee)^w$ and acts on the component indexed by $\omega$ by multiplication by $(1,1,1,\omega^2,\omega,1)\in (\MaxTorus^\vee)_1^w=\MaxTorus_1^w$.
    
    Recall that the action of the $\D_6$ group $\group{u_2,s_4,s_5}=\group{s_4,u_2s_5}$ on $\MaxTorus^w_1$ is given by the standard dihedral action by the group of symmetries of the hexagonal fundamental domain for the torus, which we will denote $\hexagon$. The element $s_5$ acts trivially while $u_2$ acts to invert the $\epsilon$ coordinate. For the identity component $ (\MaxTorus^\vee)_1$, indexed by  $\omega=1$, the element $s_1s_2$ acts trivially, and the quotient by the centraliser is a $(2,4,6)$ triangle as in the case of $\MaxTorus^w$.
    
    When $\omega$ is a nontrivial cube root of the element $1$,  our $\D_6$ group still preserves the component, but $s_1s_2$ no longer acts trivially so we first factor out its action, which we recall is given by multiplication by  $(1,1,1,\omega^2,\omega,1)$.  This yields the torus with dual hexagon as fundamental domain and we denote this fundamental domain by $\varhexagon$. So the quotient of this component by the centraliser is given by taking the dual action of $\D_6$ on the dual hexagon $\varhexagon$. But these actions are equivalent so we again obtain as quotient a $(2,4,6)$ triangle, here with $1/3$ the area of the $\omega=1$ case, and the quotient $(\MaxTorus^\vee)^w$ by the centraliser of $w$ is three $(2,4,6)$ triangles.

    \item Case $A_2^2\times A_1$:  Here the representative is $w=s_0s_6s_5s_1s_2$.  Since $s_5$ acts trivially on $p_\omega$, we have $w\cdot p_\omega=z_\omega p_\omega$ using the previous case. We thus have
    \[
    (\MaxTorus^\vee)^w=(\{p_\omega : \omega^3=1\}\times \MaxTorus^w)/\cZ.
    \]
    where $\MaxTorus^w=\MaxTorus^w_1\times \cZ$ and $\MaxTorus^w_1=\{(1,1,1,\epsilon^2,\epsilon,1) : \epsilon \in \Ti\}$.
    
    Taking a fundamental domain for the action of $\cZ$, we can identify $(\MaxTorus^\vee)^w$ with
    \[
    \{p_\omega : \omega^3=1\}\times \{(1,1,1,\epsilon^2,\epsilon,1):\epsilon \in \Ti\}.
    \]
    The centraliser is $\group{s_1s_2}\wr\group{u_2}\times \group{s_5}$.
    
    As noted in the $A_2^2$ case, for $\omega$ a cube root of $1$, the action of $s_1s_2$ followed by $z_\omega$ takes $p_\omega$ to $(1,\omega^2,1,\omega^2,\omega,\omega)$, so in $(\MaxTorus^\vee)^w$ the element $s_1s_2$ has the effect of preserving the three components and on each it multiplies the coordinate $\epsilon$ by $\omega$.
    
    It is easy to see that the generator $s_5$ acts trivially on the above fundamental domain.  The generator $u_2$ also fixes $p_\omega$ because it interchanges $r_2$ and $-r_6$. As in the case of $\MaxTorus^w$ the element $u_2$ acts on the second factor by inverting $\epsilon$.
    
    Hence the quotient of each component is an interval: the element $s_1s_2$ can act either trivially or by rotation on the circle. For $\omega=1$ we have the former, just leaving the action of $u_2$. The latter occurs when $\omega\neq 1$, and we have the dihedral action of the group $(\group{s_1s_2}\wr\group{u_2})/\group{s_1s_2(s_1s_2)^{u_2}}\cong \D_3$.

    \item Case $A_5$: Consider the action of the representative $w=s_0s_6s_3s_4s_5$ on the point $q_\omega=(1, 1, 1, 1, \omega^2, \omega^2)\in \MaxTorus$, where $\omega$ is a cube root of unity. The action of $w$ translates this by the element $z_w$ in the centre, so it represents a fixed point in the dual torus $\MaxTorus^\vee$. Hence the fixed set under the dual action of $w$ is given by
    \[
    (\MaxTorus^\vee)^w=(\{q_\omega : \omega^3=1\}\times \MaxTorus^w)/\cZ.
    \]
where $\MaxTorus^w=\MaxTorus^w_1\times \cZ$ and $\MaxTorus^w_1=\{(\alpha, 1, 1, 1, 1, 1)\mid\alpha\in \TT^1\}$.
As in the previous case, by taking a fundamental domain for the action of $\cZ$,  we can identify the fixed set with the set 
  \[
    \{q_\omega : \omega^3=1\}\times \{(\alpha, 1, 1, 1, 1, 1)\mid\alpha\in \TT^1\}.
    \]

    The centraliser is $\group{s_0s_6s_3s_4s_5}\times \group{s_1}$ with the first factor acting trivially.  The element $s_1$ preserves the three circles and in each case acts by a reflection taking $\alpha$ to $\alpha^{-1}$. The quotient is therefore, again, three copies of the interval.

    \item Case $A_2^3$: The dual case is considered in detail in the derivation of the action of the centraliser on $\MaxTorus^w$, so we will not repeat it here. In summary, the dual fixed set can be identified with a 3-dimensional vector space over $\F_3$, equipped with a quotient to $\F_3$.  The cosets of the kernel give two affine planes and a linear plane, and the action of the centraliser is given by the action of the Hessian group by orientation preserving affine maps on the affine planes, with its linear part on the linear plane. The quotient is therefore a set of four points.
    
    For the sake of completeness we note that $w=s_0s_6s_1s_2s_5s_4$ takes $p_\omega$ to $(\omega^2,1,1,\omega^2,\omega,\omega)=z_\omega p_\omega$.  The fixed set $\MaxTorus^w$ is
    \[
    \{(1,\zeta, 1,\zeta,1,\zeta)(\beta^2,\beta,1,1,1,1) : \zeta^3=\beta^3=1\}\times \cZ.
    \]
    The fixed set $(\MaxTorus^\vee)^w_1$ is therefore identified with a fundamental domain
    \[
    \{(1,\omega^2,1,1,1,\omega)(1,\zeta, 1,\zeta,1,\zeta)(\beta^2,\beta,1,1,1,1) : \omega^3=\zeta^3=\beta^3=1\}.
    \]
    
    \item Case $A_5\times A_1$: Consider the action of the representative $w=s_0s_6s_3s_4s_5s_1$ on the point $(1, 1, 1, 1, \omega^2, \omega^2)\in \MaxTorus$, where $\omega$ is a cube root of unity.  As noted in the $A_5$ case, this element is fixed by $s_1$ and is fixed up to the action of $\cZ$ by $s_0s_6s_3s_4s_5$.  Hence 
    \[
    (\MaxTorus^\vee)^w=(\{q_\omega : \omega^3=1\}\times \MaxTorus^w)/\cZ.
    \]
    Here elements of $\MaxTorus^w$ have the form $(\eta \epsilon^2,\zeta\epsilon,\zeta,\zeta\epsilon^2,\epsilon,\zeta)$ where $\epsilon^3=\zeta^2=\eta^2=1$.  A fundamental domain for the action of $\cZ$ is given by
    \[
    \{q_\omega : \omega^3=1\}\times \{(\eta,\zeta,\zeta,\zeta,1,\zeta): \zeta^2=\eta^2=1\}.
    \]
    The centraliser is $\group{w,s_1,T}$ where $w$ acts trivially (on the quotient by $\cZ$). Clearly the element $s_1$ fixes the first factor, and $T$ also fixes this because $T$ fixes the vector $r_5+r_6$.  We note that the  second factor is obtained by applying the exponential map to $\frac 12 \Z r_1+ \frac 12\Z r_T$ and $r_1,-r_T$ have inner product $-1$ (giving the group $\group{s_1,T}$ of type $A_2$).
    
    The action of $\group{s_1,T}$ on pairs $(\eta,\zeta)$ is thus the standard action $s_1:(\eta,\zeta)\to (\eta^{-1}\zeta,\zeta)=(\eta\zeta,\zeta)$ and $T:(\eta,\zeta)\to (\zeta,\zeta)=(\eta,\eta\zeta)$.
    
    Hence the quotient is $6$ points: a fundamental domain is given by $\zeta=1$.
    
    \item Cases $E_6,E_6[a_1],E_6[a_2]$. In each of these cases the fixed set in $\MaxTorus$ consists entirely of the centre of the Lie group, and is fixed pointwise. It follows from the discussion in Section \ref{Component groups}, Remark \ref{ramified component remark}, that the fixed set in $\MaxTorus^\vee$ can be identified with the Pontryagin dual of the centre, which is again cyclic of order $3$, and that the centraliser acts on this by the (dual of) the trivial action. All that remains, for the sake of completeness is to identify the three fixed point groups in these cases.
    
    In the $E_6$ case the element $w=s_1s_2s_3s_4s_5s_6$ takes $(\omega,1,1,\omega,1,1)$ to $(1,\omega,1,1,\omega,1)=z_\omega(\omega,1,1,\omega,1,1)$ for $\omega$ a cube root of unity, so the fixed set is identified with $\{(1,\omega,1,1,\omega,1):\omega^3=1\}$.
    
    For $E_6[a_1]$, the element $w=s_1s_2s_3s_4s_5s_6^{s_3}$ takes $(1,\omega,1,\omega,1,\omega^2)$ to $z_\omega(1,\omega,1,\omega,1,\omega^2)$, giving fixed set $\{(1,\omega,1,\omega,1,\omega^2):\omega^3=1\}$.
    
    Finally for $E_6[a_2]$, the element $w=s_6s_2s_0^Ts_1^Ts_4s_3$ takes $(1,\omega,1,1,1,\omega^2)$ to $z_\omega(1,\omega,1,1,1,\omega^2)$, giving fixed set $\{(1,\omega,1,1,1,\omega^2):\omega^3=1\}$.
\end{itemize}

\begin{table}[h]
\caption{Fixed sets in $\MaxTorus^\vee$ for ramified component group cases}\label{ramified component groups}
\begin{center}
\rowcolors{1}{}{lightgray}
\begin{tabular}{|l|l|}
\hline
\rowcolor{white}\multicolumn{1}{|c|}{\hbox{Conjugacy class}}&\multicolumn{1}{|c|}{\hbox{Fixed sets in $\MaxTorus^\vee$} lifted to $\MaxTorus$}\\
\hline
$A_2^2$&$ \{(1,1,1,\delta,\epsilon,1):\delta,\epsilon \in \Ti\}\times \{(1, \omega^2,1,1,1,\omega) : \omega^3=1\}$\\
$A_2^2\times A_1$&$ \{(1,1,1,\epsilon^2,\epsilon,1):\epsilon \in \Ti\}\times \{(1, \omega^2,1,1,1,\omega) : \omega^3=1\}$\\
$A_5$&$ \{(\alpha, 1, 1, 1, 1, 1)\mid\alpha\in \TT^1\}\times \{(1, 1, 1, 1, \omega^2, \omega^2) : \omega^3=1\}$\\
$A_2^3$ & $\{(\beta^2,\beta\zeta, 1,\zeta,1,\zeta) : \zeta^3=\beta^3=1\}\times\{(1,\omega^2,1,1,1,\omega)\mid \omega^3=1\}$ \\
$A_5\times A_1$&$ \{(\eta,\zeta,\zeta,\zeta,1,\zeta): \zeta^2=\eta^2=1\}\times \{(1, 1, 1, 1, \omega^2, \omega^2) : \omega^3=1\} $\\
$E_6$&$\{(1,\omega,1,1,\omega,1):\omega^3=1\}$\\
$E_6[a_1]$&$\{(1,\omega,1,\omega,1,\omega^2):\omega^3=1\}$\\
$E_6[a_2]$&$\{(1,\omega,1,1,1,\omega^2):\omega^3=1\}$ \\
\hline\end{tabular}
\end{center}
\end{table}
\newpage

\section{Applications}\label{applications}   There are two major applications for the results in this article.   

\subsection{Extended affine Weyl groups of type $E_6$}   Our first application is the computation of the rational $K$-theory of the group $C^*$-algebras for extended affine Weyl groups. From \cite{niblo2018poincare} we know that Langlands duality induces an isomorphism  in $K$-theory, and the computations given here make it possible to exhibit explicit classes and to visualise this isomorphism.

\def\thmi{Let $W_a'$ be an extended affine Weyl group of one of the two Lie groups of type $E_6$ (simply connected or adjoint type).   Up to torsion, $K_\ast(C_r^\ast\, W_a')$ is 
\begin{eqnarray*}
& &\Z^{47} \quad \quad \textrm{in dimension } 0\\
& & \Z^{11} \quad \quad \textrm{in dimension }  1.
\end{eqnarray*}}

\begin{theorem}\label{K-theory calculation}
\label{thm1}\thmi
\end{theorem}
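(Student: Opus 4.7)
The plan is to reduce the computation to equivariant cohomology of the extended quotient and then read off Betti numbers from the sector descriptions in Table~\ref{fixed sets table}.

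First, as noted in the introduction (citing \cite{niblo2018poincare}), the $K$-theory of $C^*_r(\Gamma\rtimes W)$ is identified via the Baum--Connes isomorphism with the $W$-equivariant $K$-theory $K^*_W(\MaxTorus^\vee)$ for the simply connected case, and analogously with $K^*_W(\MaxTorus)$ for the adjoint case. By Theorem~\ref{thm3} the corresponding sectors $\MaxTorus^w/Z_W(w)$ and $(\MaxTorus^\vee)^w/Z_W(w)$ are homotopy equivalent for every $w\in W$, so it suffices to carry out the computation for one of the two forms; I will work on the simply connected side where the sectors are described in Table~\ref{fixed sets table}.

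Next, apply the equivariant Chern character. Since $\MaxTorus$ is a smooth compact manifold with an action of the finite group $W$, the Chern character gives an isomorphism
\[
K^*_W(\MaxTorus)\otimes\Q \;\cong\; \bigoplus_{[w]} H^*\bigl(\MaxTorus^w/Z_W(w);\Q\bigr),
\]
where the sum runs over conjugacy classes $[w]$ in $W$ and the grading on the right matches the $\Z/2$-grading on the left by even/odd degree. Thus the ranks of $K_0$ and $K_1$ of $C^*_r(\Gamma\rtimes W)$ equal the total rational Betti numbers $\sum_{[w]} b_{\mathrm{even}}(\MaxTorus^w/Z_W(w))$ and $\sum_{[w]} b_{\mathrm{odd}}(\MaxTorus^w/Z_W(w))$ respectively.

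Now I read off the Betti numbers of each of the $25$ sectors from Table~\ref{fixed sets table}. The sectors are either contractible (simplices, products of simplices, symmetric products of simplices, cones), disjoint unions of contractible pieces (discrete sets), twisted simplex bundles over $S^1$, cylinders, M\"obius bands, or disjoint unions of circles. In each case the homotopy type is either a finite union of points or a finite union of circles, so the only nontrivial Betti numbers are $b_0$ and $b_1$. Summing the contributions class by class:  the sectors of type $\emptyset$, $A_1$, $A_1^3$, $A_2$, $A_2{\times}A_1^2$, $D_4$, $D_4[a_1]$ each contribute $1$ to $b_0$; the types $A_1^2$, $A_2{\times}A_1$, $A_3$, $A_3{\times}A_1$, $A_4$, $A_4{\times}A_1$, $D_5$, $D_5[a_1]$ each contribute $1$ to both $b_0$ and $b_1$; $A_1^4$ gives $(2,0)$; $A_3{\times}A_1^2$ gives $(2,2)$; $A_2^2$, $A_2^2{\times}A_1$, $A_5$ each give $(3,0)$; $A_2^3$ gives $(4,0)$; $A_5{\times}A_1$ gives $(6,0)$; and each of $E_6$, $E_6[a_1]$, $E_6[a_2]$ gives $(3,0)$. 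Adding these yields a total of $47$ in even degree and $11$ in odd degree.

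There is no substantial obstacle: the homotopy equivalences of Theorem~\ref{thm3} take care of Langlands duality in one stroke, the Chern character is a well-established tool, and the sector computations have already been carried out in Sections~\ref{fixed sets} and~\ref{section: adjoint fixed sets}. The only thing one must be careful about is the bookkeeping of Betti numbers across the twenty-five conjugacy classes, which is straightforward since each sector is either contractible, a finite disjoint union of contractible pieces, a (possibly twisted) interval or simplex bundle over $S^1$, a M\"obius band, or a disjoint union of circles; in each of these cases the Betti numbers in dimensions $\geq 2$ vanish, and $b_0$ and $b_1$ are immediate from the description in Table~\ref{fixed sets table}.
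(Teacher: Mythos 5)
Your proposal follows essentially the same route as the paper: identify $K_*(C^*_r\,W_a')$ with the $W$-equivariant $K$-theory of the appropriate maximal torus, apply the Baum--Connes/equivariant Chern character to decompose it over the $25$ sectors of the extended quotient, invoke Theorem~\ref{thm3} to transfer the answer between the two Langlands dual forms, and read off Betti numbers from Table~\ref{fixed sets table}. The method is sound and matches the paper's proof.

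However, your itemized tally contains one misclassification: you list $A_2\times A_1^2$ among the sectors contributing $(1,0)$, but its quotient is $\mathrm{SP}^2(\TT^1)$, a M\"obius band, which is homotopy equivalent to a circle and therefore contributes $(1,1)$. As written, your contributions sum to $47$ in even degree but only $10$ in odd degree, contradicting the asserted total of $11$. Moving $A_2\times A_1^2$ into the $(1,1)$ list (alongside $A_1^2$, $A_2\times A_1$, $A_3$, $A_3\times A_1$, $A_4$, $A_4\times A_1$, $D_5$, $D_5[a_1]$, making nine such sectors) leaves $b_0=47$ unchanged and corrects $b_1$ to $9+2=11$, in agreement with the theorem.
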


\begin{proof}
 Let $G$ be a compact connected simply-connected semisimple  Lie group of type $E_6$, with centre $\cZ$. Let $\MaxTorus$ be a maximal torus in $G$, and let $\Gamma$ be the kernel of the exponential map $\exp : \mathfrak{t} \to T$, which is simply the root lattice since $G$ is simply connected.

The Langlands dual $G^\vee=G/\cZ$ is the adjoint form and its extended affine Weyl group is  
\[
W_a'(G^\vee): = \Gamma^\vee \rtimes W
\]
where $\Gamma^\vee$ is the dual lattice. Its group $C^*$-algebra $C^*(W_a'(G^\vee))$ is isomorphic to $C(\MaxTorus)\rtimes W$ and by the Green-Julg theorem its $K$-theory is isomorphic to $K^*_W(\MaxTorus)$

Now we apply the equivariant Chern character \cite {baum1988chern} for the discrete group $W$:
\[
K_W^*(\MaxTorus ) \otimes_{\Z} \C \simeq H^*(\MaxTorus \q W; \C)
\]
where the  geometric extended quotient $\MaxTorus \q W$ is the quotient of the inertia space
\[
\widetilde{\MaxTorus } = \{(w,x)\in W\times \MaxTorus : wx=x\}
\]
by the action $g(w,x)=(w^g,gx)$. The quotient is given by the formula
\[
\MaxTorus \q W : = \bigsqcup \MaxTorus ^w/Z_W(w) 
\]
with one $w$ chosen in each $W$-conjugacy class.   Putting all this together, we obtain 
\[
K_* \,C^*(W'_a(G^\vee)) \otimes_{\Z} \C \simeq \bigoplus H^*(\MaxTorus^w/Z_W(w); \C)
\]
The 25 sectors  
are listed in the right-hand column of Table \ref{Fixed sets and centralisers}, and the $K$-theory is readily computed.   

For the extended affine Weyl group $W_a'(G)$ the $K$-theory is obtained by replacing the sectors $\MaxTorus^w/Z_W(w)$ by $(\MaxTorus^\vee)^w/Z_W(w)$ each of which is homotopic to the corresponding sector for the action of $W$ on $\MaxTorus$ by Theorem \ref{E6 sector theorem}.
\end{proof}

\subsection{$p$-adic groups of type $E_6$}

The Baum-Connes conjecture does not help with computing the $K$-theory of $p$-adic groups since the left hand side is not itself tractable. The ABPS-framework developed in \cite{aubert2015conjectures} is designed to 
fix this, providing a more effective approach to $p$-adic groups.   Indeed, Conjecture 5, \emph{ibid}  provides a much finer and more precise formula in $K$-theory than Baum-Connes alone provides.

Let $\cG$ denote a split group of type $E_6$ over a $p$-adic field; the group may be of adjoint type or simply connected.   
Let $C^\ast_r(\cG)$ denote the reduced $C^\ast$-algebra of $\cG$.   The reduced $C^*$-algebra admits the Bernstein decomposition
\[
C^*_r(\cG) = \bigoplus_{\fs \in \fB(\cG)} C^*_r(\cG)^\fs
\]
where $\fB(\cG)$ is the Bernstein spectrum of $\cG$.   Each point $\fs$ in the Bernstein spectrum is an equivalence class of cuspidal pairs
$(M, \sigma)$ where $M$ is a Levi subgroup of $\cG$ and $\sigma$ is an irreducible cuspidal representation of $\cM$.  In particular, we have the 
\emph{Iwahori point}  $\mathfrak{i}$ defined by the pair $(\cT_p, 1)$ where $\cT_{p}$ is a maximal torus of (the $p$-adic group) $\cG$ and $1$ is the trivial representation of $\cT_p$.   
We will write
\[
\fA = C^*_r(\cG)^{\mathfrak{i}}.
\]
This is called  the reduced Iwahori-spherical $C^*$-algebra.  The spectrum of the $C^*$-algebra $\fA$ comprises all irreducible 
tempered representations of $\cG$ which admit a nonzero Iwahori-fixed vector. 

Let $\cG^\vee$ denote the (complex) Langlands dual of $\cG$, let $\cT_\C$ denote a maximal torus in $\cG^\vee$, and let $\cT$ denote the maximal compact subgroup of 
$\cT_\C$.     

According to \cite[Eqn.(4.9)]{aubert2015conjectures}, we have
\[
K_j(\fA) \otimes_{\Z} \Q \simeq K^j_W(\cT) \otimes_{\Z} \Q
\]

The results in $\S 5.1$ now lead immediately to the following answer for the $K$-theory of $\fA$:
\[
K_0(\fA)\otimes_{\Z} \C  = \C^{47}, \quad K_1(\fA) \otimes_{\Z} \C = \C^{11}.
\]

From the point of view of noncommutative geometry, the $C^*$-algebra $\mathfrak{A}$ behaves, 
at the level of $K$-theory (after tensoring by $\C$), as if its spectrum was equal to the extended quotient $\MaxTorus\q W$.

\newpage

 \newgeometry{margin=0.3in}
 \begin{landscape}
 \section{Power relations between conjugacy classes and reflection structures of centralisers}
 
 $G_5, G_6, G_8, G_{25}, G_{28}, G(p,q,r)$ are complex reflection groups as classified by Shephard-Todd.
 \begin{figure}[ht]
   \centering
   \includegraphics[width=11in]{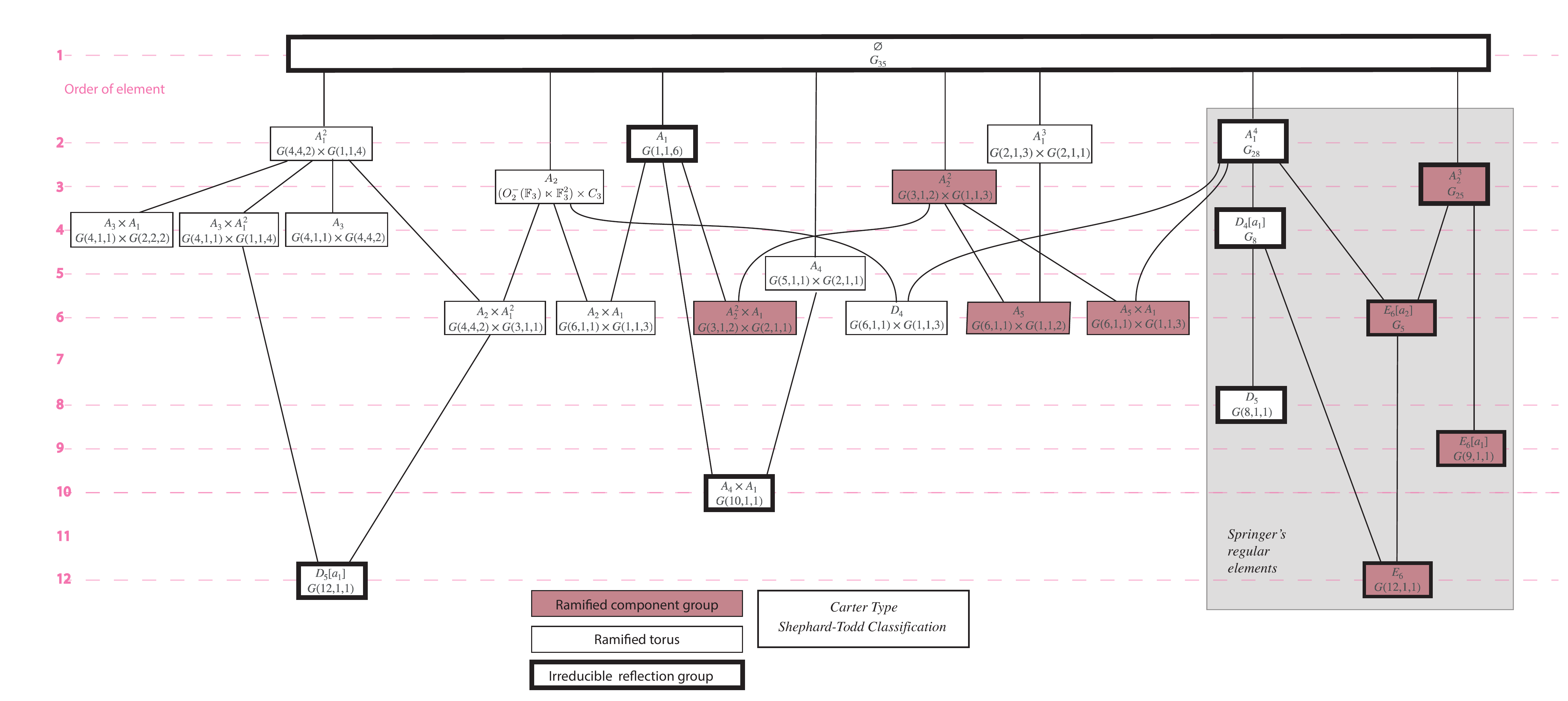} 
   \caption{Inclusions among the centralisers induced by power relations of conjugacy class representatives.  Layers show the order of the representatives, indicating the relevant power relation.}
   \label{fig:centraliserinclusions}
 \end{figure}
 \end{landscape}
 \restoregeometry

\skiptoc

\end{document}